\documentclass[reqno,11pt]{amsart}
 
\usepackage{amsmath}
\usepackage{amsthm,amssymb,color,comment}
\usepackage{bbm}
\usepackage{mathrsfs}
\usepackage{hyperref}
\usepackage[TS1,T1]{fontenc}
\usepackage[utf8]{inputenc}
\usepackage{dsfont}
\usepackage{tikz}
\usepackage{enumitem}
\usepackage{subfigure}
\usepackage{mathtools}
\usepackage{bbold}
\usepackage{esint}
\usepackage[sort]{cite}

\numberwithin{equation}{section}

\newtheorem{thm}{Theorem}[section]

\newtheorem{proposition}[thm]{Proposition}
\newtheorem{lem}[thm]{Lemma}

\newtheorem{Def}[thm]{Definition}

\theoremstyle{definition}
\newtheorem{Ass}[thm]{Assumption}
\newtheorem{rem}[thm]{Remark}

\DeclareMathOperator{\supp}{supp}

\DeclareMathOperator{\DIV}{div}
\DeclareMathOperator{\CURL}{curl}

\DeclareMathOperator{\loc}{loc}

\newcommand{\R}{\mathbb{R}}

\newcommand{\Rd}{\mathbb{R}^{d}}

\newcommand{\N}{\mathbb{N}}

\newcommand{\eps}{\varepsilon}

\newcommand{\diff}{\mathop{}\!\mathrm{d}}

\newcommand{\ueps}{u_{\varepsilon}}
\newcommand{\veps}{v_{\varepsilon}}
\newcommand{\seps}{s_{\varepsilon}}

\newcommand{\weak}{\rightharpoonup}

\makeatletter
\newcommand{\doublewidetilde}[1]{{%
  \mathpalette\double@widetilde{#1}%
}}
\newcommand{\double@widetilde}[2]{%
  \sbox\z@{$\m@th#1\widetilde{#2}$}%
  \ht\z@=.9\ht\z@
  \widetilde{\box\z@}%
}
\makeatother

\author{Jakub Skrzeczkowski}
\address{{\it Jakub Skrzeczkowski: } St John's College, University of Oxford, St Giles, Oxford, OX1 3JP, United Kingdom \& Mathematical Institute, University of Oxford, Woodstock Road, Oxford, OX2 6GG, United Kingdom}
\email{jakub.skrzeczkowski@maths.ox.ac.uk}

\begin{document}

\title[]{Global solutions to cross-diffusion systems with independent advections in one dimension}

\begin{abstract}
We consider cross-diffusion systems describing evolution of two species $u$ and $v$ moving according to Darcy's law with the pressure law $p(s) = \frac{1}{\alpha-1} s^{\alpha-1}$ where $s=u+v$. One of the most challenging questions in the field is the construction of solutions to the problem in the presence of additional advection fields, without imposing any artificial structure on the fields or the initial conditions. Although advection arises naturally in these models, it breaks the symmetry of the system and prevents application of techniques developed in recent years. Here, we provide a new approach to construct solutions in one space dimension that works in a unified manner for all pressure exponents $\alpha \in (0,\infty)$ and for arbitrary initial data. In~particular, in the regime $\alpha > 1$, this yields the first existence result of its kind, obtained without any structural assumptions. We construct the solutions as a limit of a vanishing viscosity approximation $(u_{\varepsilon}, v_{\varepsilon})$. The main challenge is to identify the limit of $u_{\varepsilon} \, \partial_x p(s_{\varepsilon})$, where $s_{\varepsilon} = u_{\varepsilon} + v_{\varepsilon}$. The key new insight is that possible oscillations of $u_\varepsilon$ and $\partial_x p(s_\varepsilon)$ are correlated, simplifying the Young measure analysis in the compensated compactness argument and allowing identification of the limit. Somewhat surprisingly, in contrast to the theory of $2\times2$ hyperbolic systems, the argument relies on only three entropy-entropy flux pairs. This is particularly useful for $\alpha>2$, where it is unclear whether additional entropies are available.
\end{abstract}

\keywords{cross-diffusion systems, global weak solutions,
degenerate diffusion, Darcy's law, nonlocal interactions, compensated compactness, Young measure.}

\subjclass{35A01, 35D30, 35K65, 35Q92, 92D25}

\maketitle

\setcounter{tocdepth}{1}

\section{Introduction}
Cross-diffusion systems have attracted considerable attention in recent years due to their analytical challenges \cite{MR4188329,MR5033040,MR3740386,MR4939528,MR4000848} and their applications in the applied sciences \cite{MR3948738,MR4902815,murakawa2015continuous}. These partial differential equations, first formulated in the 1970s and 1980s \cite{MR736508,MR821681,MR540951,MR887658}, still present many challenging and open mathematical problems. The aim of the present paper is to address one of them, namely, the existence of weak solutions to the following cross-diffusion system:
\begin{equation}\label{eq:general_cross_diffusion_intro}
\begin{split}
\partial_t u &= \partial_x(u\, \partial_x p(s)) + \partial_x(u \, \partial_x( V^1 + K^{1,1}\ast u + K^{1,2}\ast v)),\\
\partial_t v &= \partial_x(v\, \partial_x p(s)) + \partial_x(v \, \partial_x( V^2 + K^{2,1}\ast u + K^{2,2}\ast v)),
\end{split}
\end{equation}
describing the evolution of two species on $\R$ having nonnegative densities $u$, $v$, where
$$
s = u+v.
$$ 
Writing these equations as continuity equations $\partial_t u + \partial_x (u\, \mathcal{W}^1) = 0$, $\partial_t v + \partial_x (v\, \mathcal{W}^2) = 0$, we recognize that the velocities $\mathcal{W}^1$ and $\mathcal{W}^2$ are given by
\begin{align*}
\mathcal{W}^1 &=- \partial_x p(s) - \partial_x( V^1 + K^{1,1}\ast u + K^{1,2}\ast v),\\
\mathcal{W}^2&=- \partial_x p(s) - \partial_x( V^2 + K^{2,1}\ast u + K^{2,2}\ast v).
\end{align*}
The first term corresponds to Darcy's law \cite{darcy1856fontaines} with the pressure  
\begin{equation}\label{eq:darcy_law_introduction}
p(s) = \begin{cases}
\frac{1}{\alpha - 1} s^{\alpha-1} &\mbox{ if } \alpha \neq 1,\\
\log s &\mbox{ if } \alpha = 1,
\end{cases} \qquad \qquad \mbox{ for } \alpha \in (0,\infty),
\end{equation}
and it models the fact that species avoid overcrowded regions. The case $\alpha>1$ seems to be more relevant from the point of view of applications as the total density $s$ is expected to move with finite speed of propagation by an analogy to the porous medium equation \cite[Chap.~14]{MR2286292}. On the other hand, for $\alpha \leq 1$ one expects that $s$ is immediately globally supported as for heat equation or fast-diffusion equation, see \cite[Theorem 2.4]{MR797051} and \cite{MR760987}. Moreover, the so-called incompressible limit $\alpha \to \infty$ shows that these models are related to the free boundary models of tissue growth for large $\alpha$ \cite{MR3162474,MR429164}.\\

The additional terms with functions $V^{i}:\R \to \R$, $K^{i,j}:\R \to \R$ are responsible for an independent advection and nonlocal interaction between the densities. We recall that $K^{i,j}~\ast~u(x) = \int_{\R} K^{i,j}(x-y)\,u(y)\diff y$ and analogously for $K^{i,j}\ast v$. The inclusion of these effects in the models is classical: nonlocal interaction terms account for repulsion and attraction between species \cite{MR2279324,MR2111591,MR1698215,MR4902815}, while advection captures spatial heterogeneity. We do not assume any particular structure of these functions except some mild regularity listed in Assumption \ref{ass:velocity_kernels_main}. It will be convenient to define
\begin{align*}
\mathcal{V}^1[u,v] &= V^1 + K^{1,1}\ast u + K^{1,2}\ast v,\\
\mathcal{V}^2[u,v] &= V^2 + K^{2,1}\ast u + K^{2,2}\ast v,
\end{align*}
so that the system \eqref{eq:general_cross_diffusion_intro} can be written as 
\begin{equation}\label{eq:general_cross_diffusion_intro_short_velocity}
\begin{split}
\partial_t u &= \partial_x(u\, \partial_x p(s)) + \partial_x(u \, \partial_x \mathcal{V}^1[u,v]),\\
\partial_t v &= \partial_x(v\, \partial_x p(s)) + \partial_x(v \, \partial_x \mathcal{V}^2[u,v]).
\end{split}
\end{equation}
If one would like to think about something simpler, the system
\begin{align*}
\partial_t u &= \partial_x(u\, \partial_x(u+v)) - \partial_x u,\\
\partial_t v &= \partial_x(v\, \partial_x(u+v)) + \partial_x v,
\end{align*}
i.e. $p(s) = s$, $\alpha = 2$, $K^{i,j} = 0$, $V^1(x)=-x$ and $V^2(x)=x$, is covered by the general theory developed in this paper and already presents several of the main difficulties that we need to address (though not all of them, since the case $\alpha>2$ is more involved; see the proof of Proposition~\ref{prop:the_case_of_support_on_the_line_identification_of_YM} for $\alpha\le 2$ in Section \ref{subsect:alternative_proof_alpha_at_most_3} and Section~\ref{subsect:the_case_that_YM_supp_on_a_nonzero_line} for the treatment of arbitrary~$\alpha$).\\

The target of this work is to construct weak solutions without imposing any structural conditions on $V^{i}$, $K^{i,j}$ or conditions guaranteeing that $u$, $v$ or $u+v$ are strictly positive. In particular, we do not make any assumption on the support of the initial conditions, the pressure law exponent and velocity fields (except mild regularity assumptions which could be probably improved). We also do not include the self-diffusion terms $\partial^2_x u$, $\partial^2_x v$ in \eqref{eq:general_cross_diffusion_intro_short_velocity}. \\

The standard approach to construct weak solutions is to consider some regularization of \eqref{eq:general_cross_diffusion_intro_short_velocity}, say by adding an artificial viscosity term 
\begin{equation}\label{eq:general_cross_diffusion_intro_viscosity}
\begin{split}
\partial_t \ueps &= \partial_x(\ueps\, \partial_x p(\seps)) + \partial_x(\ueps \, \partial_x \mathcal{V}^1[\ueps,\veps] ) + \eps\, \partial^2_x \ueps,\\
\partial_t \veps &= \partial_x(\veps\, \partial_x p(\seps)) + \partial_x(\veps \, \partial_x \mathcal{V}^2[\ueps,\veps]) + \eps\, \partial^2_x \veps,
\end{split}
\end{equation}
with $\seps=\ueps+\veps$ and study the limit $\eps \to 0$. The difficult terms are nonlinear ones, namely $\ueps\, \partial_x p(\seps)$, $\veps \, \partial_x p(\seps)$. Without advection (i.e. $\mathcal{V}^1 = \mathcal{V}^2 = 0$) the system has attracted a~large group of researchers and the understanding is now quite complete in arbitrary number of dimensions. In particular, there are several techniques to construct weak solutions: Aronson-Benilan estimates \cite{MR524760} can be used to obtain a uniform bound on $\partial^2_x p(\seps)$ \cite{MR4000848}, BV estimates on $\frac{\ueps}{\seps}$, $\frac{\veps}{\seps}$ coming from a transport equation satisfied by these quantities lead to strong compactness of $\ueps$ and $\veps$ in one dimension (probably this argument was used in \cite{MR2652018} for the first time but see \cite{MR4072681,MR3870087} as well) while direct energy considerations show that $\{\partial_x p(\seps)\}$ is strongly compact in $L^2((0,T)\times\Rd)$ for $\alpha \geq 1$ \cite{MR4712820, MR4179253, MR4880211}. In contrast, for a single equation rather than a system, the problem with advection is well understood \cite{MR4720218}.\\

The additional advection in \eqref{eq:general_cross_diffusion_intro_short_velocity} prevents application of any of these arguments and new ideas are needed. To the best of our knowledge, the following are the only results available in the literature for \eqref{eq:general_cross_diffusion_intro_short_velocity}. All of them hold only in one space dimension.
\begin{itemize}
\item \cite{MR3795211} constructs weak solutions to \eqref{eq:general_cross_diffusion_intro_short_velocity} with $\alpha>1$ without nonlocal interaction $K^{i,j}=0$. The initial conditions are initially separated and the Authors impose an inequality between the velocities $\partial_x V^1$ and $\partial_x V^2$, which prevents the two populations from mixing. 
\item \cite{meszaros2025cross} constructs for the first time weak solutions to \eqref{eq:general_cross_diffusion_intro_short_velocity} with $\alpha=1$ allowing arbitrary velocity field (nonlocal interactions are not explicitly mentioned there but it does not seem a difficult issue to include them in the analysis). The main drawback of the work is the assumption on the initial conditions $\log(u^0/v^0) \in BV(\mathbb{T})$ (so both $u^0$ and $v^0$ have to be supported on the same set). The main observation of \cite{meszaros2025cross} is that $f= \log(u/v)$ solves a transport equation so the BV regularity of $f$ is propagated in time. This gives strong convergence of $u$ and $v$ which is sufficient to conclude the proof of existence of solutions.
\item \cite{elbar2025cross} extends the work of \cite{meszaros2025cross} to the fast-diffusion case $\alpha\in(0,1)$ under similar assumptions on the initial conditions. 
\item Shortly before the submission of this preprint, the Authors of \cite{meszaros2025cross} posted a new work \cite{alpar_guy_newwork} in which the assumption $\log(u^0/v^0) \in BV(\mathbb{T})$ is removed, thereby allowing for general initial configurations in the regime $\alpha \leq 1$, which is a significant extension.
\end{itemize}
We remark that all these works cannot deal with the case $\alpha>1$ where the populations evolve with finite speed of propagation which is expected in mathematical biology as explained above. On the other hand, the work presented here covers the whole regime $\alpha \in (0,\infty)$ and all initial configurations by a completely different method than the one of \cite{alpar_guy_newwork, elbar2025cross, meszaros2025cross}. \\

In the present work we do not address the problem of obtaining uniform BV estimates for ${\ueps}$ and ${\veps}$, which appears to be a very challenging and worthwhile problem in its own right. Instead, we prove directly that the nonlinear terms $\ueps\, \partial_x p(\seps)$ and $\veps \, \partial_x p(\seps)$ converge weakly to $u\, \partial_x p(s)$ and $v\,\partial_x p(s)$, respectively; that is, we identify their weak limits. We use the approach of compensated compactness developed by Tartar and Murat in the context of hyperbolic conservation laws \cite{MR506997,MR584398}. This is formulated in the language of Young measures (introduced in Section \ref{subsect:young_measures}) which encode weak limits of nonlinear functions. The general idea is to derive additional PDEs satisfied by nonlinear functions of the solution, the so-called entropy-entropy flux pairs. Applying the Div-Curl Lemma (see Section~\ref{subsect:div-curl-lemma}) then yields constraints on the Young measure which, provided sufficiently many such pairs are available, allow one to deduce convergence. We remark that the approach has been successfully applied (among many others) to hyperbolic conservation laws \cite{MR684413,MR1284790, MR1383202}, mixed-type elliptic-hyperbolic systems \cite{MR2403603}, fast-reaction limits in reaction-diffusion systems \cite{MR4588320,MR4384332}, and regularization of forward-backward diffusion problems \cite{MR1299852,MR4384332}. Last but not least, we remark that Young measures are not only used to identify the limit, but also to define the concept of solutions (measure-valued solutions), where the limit cannot be identified \cite{MR4584297,MR2805464,MR3567640}.\\

In contrast to the aforementioned works, the present paper requires significant new developments in how the compensated compactness framework is exploited. The Young measure considered here captures oscillations of two sequences, ${\ueps/\seps}$ and ${\partial_x \seps^{\alpha}}$, which do not converge strongly and for which there is no simple relationship (for instance, via the divergence). Therefore, the system under consideration should be compared with $2\times 2$ hyperbolic conservation laws \cite{MR684413,MR1284790,MR1383202}, where the argument typically relies on countably many entropy-entropy flux pairs.
In our setting, such a family exists (Remark~\ref{rem:cons_law_uk+1_over_sk}), but only for $\alpha \leq 2$ (Remark~\ref{rem:integrability_of_derivative_sum_with_advection}), which excludes many cases of interest; in particular, large values of $\alpha$ are important in tumor-growth models \cite{MR3162474,MR429164}. Consequently, we carry out a careful analysis of the identities obtained via compensated compactness in order to complete the argument. A~more detailed sketch of the proof is provided in Section~\ref{sect:sketch_of_the_proof}. \\

We will make the following assumptions in all results throughout the paper. They are certainly not optimal. For instance, one could easily allow for time-dependent $V^i$ and $K^{i,j}$ by assuming additional time regularity. Depending on $\alpha$, one could also relax some assumptions on the kernels $K^{i,j}$ by using additional integrability information on the solution, see e.g. \cite{carrillo2024well} for $\alpha=1$.
\begin{Ass}\label{ass:velocity_kernels_main}
We assume that:
\begin{itemize}
\item  $\partial_x V^i$, $\partial_x K^{i,j}$, $\partial^2_x V^i$, $\partial^2_x K^{i,j}$  are in $L^{\infty}(\R)$,
\item the initial conditions $u^0, v^0 \in L^1(\R) \cap L^{\infty}(\R)$ and $u^0\, |x|, v^0\, |x| \in L^1(\R)$. 
\end{itemize}
\end{Ass}
The weak solutions to \eqref{eq:general_cross_diffusion_intro_short_velocity} that we construct are defined as follows. We interpret the terms $u\, \partial_x p(s)$, $v\, \partial_x p(s)$ as 
\begin{equation}\label{eq:pressure_term_rewritten_derivative_intro}
u\, \partial_x p(s) = u \, p'(s) \, \partial_x s = u\, s^{\alpha-2} \, \partial_x s = \frac{u}{s} \,  s^{\alpha-1} \, \partial_x s = \frac{1}{\alpha} \, \frac{u}{s} \, \partial_x s^{\alpha},
\end{equation}
since in our analysis it is useful to deal with derivatives of terms which are not singular (this is helpful mostly in the proof of Proposition \ref{prop:weak_compact_partial_x_seps}). Above, we always define $\frac{u}{s} = 0$ on the set $\{s = 0\}$. In fact, this does not matter because we show that on the set $\{s=0\}$ we have in fact $\partial_x s^{\alpha} = 0$ cf. \ref{conv:der_s_alpha_zero_in_vacuum} in Proposition \ref{prop:basic_convergences}.
\begin{Def}\label{def:weak_sol}
We say that a pair $u, v \in L^{\infty}(0,T; L^1(\R))$ with $s:=u+v$ is a weak solution to \eqref{eq:general_cross_diffusion_intro_short_velocity} with initial conditions $u^0, v^0 \in L^1(\R)$ if $\partial_x s^{\alpha} \in L^2((0,T)\times\R)$ and for all $\varphi, \phi \in C_c^{\infty}([0,T)\times\R)$ we have
$$
\int_0^T \int_{\R} u\, \partial_t \varphi \diff x \diff t + \int_{\R} u^0(x) \, \varphi(0,x) \diff x = \int_0^T \int_{\R} \left(\frac{1}{\alpha} \, \frac{u}{s}\, \partial_x s^{\alpha} + u \, \partial_x \mathcal{V}^1[u,v]\right)\, \partial_x \varphi \diff x \diff t,
$$
$$
\int_0^T \int_{\R} v\, \partial_t \phi \diff x \diff t + \int_{\R} v^0(x) \, \phi(0,x) \diff x = \int_0^T \int_{\R} \left(\frac{1}{\alpha} \, \frac{v}{s}\, \partial_x s^{\alpha} + v \, \partial_x \mathcal{V}^2[u,v]\right)\, \partial_x \phi \diff x \diff t.
$$
\end{Def}
The main result of the paper reads:
\begin{thm}\label{thm:main}
Under Assumption \ref{ass:velocity_kernels_main}, there exists a weak solution to \eqref{eq:general_cross_diffusion_intro_short_velocity} in the sense of Definition \ref{def:weak_sol} which satisfies $u, v \in L^{\infty}(0,T; L^p(\R))$ for all $p\in[1,\infty]$, $\partial_x s^{\frac{\alpha}{2}} \in L^2((0,T)\times\R)$ and $s\,|x| \in L^{\infty}(0,T; L^1(\R))$. It is obtained as a limit of solutions to \eqref{eq:general_cross_diffusion_intro_viscosity}. More precisely, there exists a subsequence (not relabelled) such that 
\begin{itemize}
\item $\ueps \weak u$, $\veps \weak v$ weakly/weakly$^*$ in $L^p((0,T)\times\R)$ for all $p\in[1,\infty]$,
\item $\seps \to s$ a.e. and strongly in $L^p((0,T)\times\R)$ for all $p\in[1,\infty)$,
\item $\partial_x \seps^{{\alpha}} \to \partial_x s^{{\alpha}}$ strongly in $L^2((0,T)\times\R)$,
\item $\ueps \to u$, $\veps \to v$ strongly in $L^p(\mathcal{S})$ for all $p\in[1,\infty)$ and a.e. on $\mathcal{S}$, where the set $
\mathcal{S}:= [0,T]~\times~\R \setminus \{(t,x): s(t,x) > 0 \mbox{ and } 
\partial_x\mathcal{V}^1[u,v] = \partial_x\mathcal{V}^2[u,v]\}
$.
\end{itemize}
\end{thm}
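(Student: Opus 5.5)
We construct the solution as the limit of the viscous approximation \eqref{eq:general_cross_diffusion_intro_viscosity} and split the argument into a priori estimates, compactness of the total density, and identification of the nonlinear flux via compensated compactness.

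\textbf{Step 1: uniform estimates.} For fixed $\eps>0$ we solve \eqref{eq:general_cross_diffusion_intro_viscosity} by a standard parabolic fixed point argument; nonnegativity follows from the maximum principle. We then derive the following bounds, uniform in $\eps$:
\begin{itemize}
\item mass conservation;
\item $L^\infty$ bounds for $\ueps,\veps$, by testing with powers $\ueps^{p-1}$ and using $\partial^2_x V^i,\partial^2_x K^{i,j}\in L^\infty$, so that the advection contributes only terms like $\int \ueps^p\,|\partial^2_x\mathcal{V}^1|$, followed by Gronwall and $p\to\infty$;
\item the first moment $\seps|x|$, from the bounded velocities;
\item the energy-type bounds $\partial_x\seps^{\alpha/2}\in L^2$ and $\partial_x\seps^\alpha\in L^2$, by testing the equation for $\seps$ with suitable powers of $\seps$.
\end{itemize}
Since the equation for $\seps$ is $\partial_t\seps=\frac1\alpha\partial^2_x\seps^\alpha+\partial_x(\ueps\partial_x\mathcal{V}^1+\veps\partial_x\mathcal{V}^2)+\eps\partial^2_x\seps$, we get a bound on $\partial_t\seps$ in a negative Sobolev space. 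Aubin--Lions (via $\seps^\alpha$ and monotonicity) together with the moment bound then gives $\seps\to s$ a.e. and in $L^p$. The nonlocal terms converge strongly, because $\partial_x K^{i,j}*\ueps$ is compact by the weak convergence of $\ueps$ and the regularity of the kernels.

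\textbf{Step 2: Young measure set-up.} We introduce the Young measure $\nu_{t,x}$ generated by the pair $(\ueps/\seps,\partial_x\seps^\alpha)$ on $\{s>0\}$. On $\{s=0\}$ we show $\partial_x s^\alpha=0$ and trivial convergence, as referenced in Proposition~\ref{prop:basic_convergences}. The goal is to prove
\[
\frac{\ueps}{\seps}\,\partial_x\seps^\alpha\weak \frac{u}{s}\,\partial_x s^\alpha .
\]
Together with strong $L^2$ convergence of $\partial_x\seps^\alpha$, this suffices to pass to the limit in Definition~\ref{def:weak_sol}.

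\textbf{Step 3: entropy-entropy flux pairs and the Div-Curl Lemma.} We use three entropy-entropy flux pairs:
\begin{itemize}
\item the $\seps$ equation;
\item the $\ueps$ equation;
\item a third pair obtained from the equation for $\seps^\alpha$, or for $\ueps^2/\seps$.
\end{itemize}
For each pair we check that the divergences are compact in $H^{-1}_{\loc}$: the $\eps$-terms vanish by the energy bounds, and the remaining terms are bounded in measures. Applying the Div-Curl Lemma to these pairs yields commutation identities for $\nu$. The key claim is that these identities force $\nu_{t,x}$ either to be a Dirac in the $\partial_x\seps^\alpha$ variable, or to be supported on a line where the two variables are correlated. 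We then treat the two cases.
\begin{itemize}
\item \emph{Dirac case.} Here $\partial_x\seps^\alpha$ converges strongly, so the product passes to the limit.
\item \emph{Line case.} The slope of the line is governed by $\partial_x\mathcal{V}^1-\partial_x\mathcal{V}^2$. When this difference is nonzero, we deduce that the $\ueps/\seps$ variable is also concentrated, giving strong convergence of $\ueps,\veps$ on $\mathcal{S}$.
\end{itemize}
On the complement of $\mathcal{S}$ the two advections coincide, and the product identity can be checked directly: both species move with the same velocity, so the flux is linear in $u/s$ given strong convergence of $\partial_x\seps^\alpha$.

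\textbf{Main obstacle.} The hardest step is the Young measure analysis of Step 3 for $\alpha>2$, where only three pairs are available and integrability of higher entropies fails. There we would try to write the three Div-Curl identities as a quadratic relation in $(\lambda,\xi)$ integrated against $\nu\otimes\nu$. The aim is to show that this relation is a nonpositive quantity, such as a variance, multiplied by a factor involving $\partial_x(\mathcal{V}^1-\mathcal{V}^2)$. That would force collapse onto a line, and a second identity would then pin down the line.

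Finally, the strong $L^2$ convergence of $\partial_x\seps^\alpha$ would follow from the energy equality for the limit compared with the $\limsup$ of the approximate energy, using the identified flux.
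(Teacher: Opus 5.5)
You have the right architecture (viscous approximation, a Young measure for the pair $(\ueps/\seps,\partial_x\seps^\alpha)$, Div--Curl, and an energy argument at the end). However, the step that actually proves the theorem, namely collapsing the Young measure, is not supplied, and the ingredients you name cannot supply it. Pairing the $\ueps$- and $\seps$-equations gives exactly one identity. For the Young measure $\pi_{t,x}$ of the centred pair $(\ueps/\seps-u/s,\ \partial_x\seps^\alpha-\partial_x s^\alpha)$ it reads $\int\lambda_1\lambda_2\diff\pi_{t,x}=\mathcal F\int\lambda_1^2\diff\pi_{t,x}$, with $\mathcal F=\alpha s\,\partial_x(\mathcal V^2-\mathcal V^1)$.

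Your third pair adds nothing usable.
An entropy of $\seps$ alone, such as $\seps^\alpha$, carries no new information because $\seps\to s$ strongly: pairing it with the $\ueps$-equation reproduces the same identity multiplied by a strongly convergent factor. The $\ueps^2/\seps$ law has a production term containing $\partial_x\seps\,\partial_x(\mathcal V^1-\mathcal V^2)$, which the available estimates control in $L^1_{\loc}$ only for $\alpha\le 2$. So for $\alpha>2$ you have no third pair, and your ``main obstacle'' paragraph is a plan rather than an argument.

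The missing idea is the entropy $E_\eps=\ueps\log\ueps+\veps\log\veps$ of Proposition~\ref{prop:new_cons_law_log_general_alpha}. Its production term $-\partial_x p(\seps)\,\partial_x\seps+\dots$ is bounded in $L^1$ for every $\alpha$ by the entropy dissipation. The paper uses it as follows:
(i) it is paired with the $\ueps$-equation through Theorem~\ref{thm:div_curl_lemma_version_CR};
(ii) $\partial_x s^\alpha$ is eliminated from all weak limits using $\lambda_2=\mathcal F\lambda_1$ (Lemma~\ref{lem:general_identity_to_replace_partialx_ps_with_just_u});
(iii) the resulting identity is split as $X+Y=0$, with $X=(\overline E-E)(\overline{u^2}-u^2)\ge 0$ by Jensen and $Y\ge0$ via $2xy(x-y)\log(x/y)\le (x-y)^2(x+y)$;
(iv) this forces $\overline{u^2}=u^2$.
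Nothing in your proposal plays this role.

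You also do not say where the line comes from, and it is not a Div--Curl output. In the paper, multiplying the space-integrated $\seps$-equation by $\partial_x\seps^\alpha$ identifies the defect measure (Proposition~\ref{prop:weak_compact_partial_x_seps}): $|\partial_x\seps^\alpha-\partial_x s^\alpha|^2\weak\alpha(\overline{u\,\partial_x s^\alpha}-u\,\partial_x s^\alpha)\,\partial_x(\mathcal V^2-\mathcal V^1)$. This gives $\int\lambda_2^2\diff\pi_{t,x}\le\mathcal F\int\lambda_1\lambda_2\diff\pi_{t,x}$. Combined with the identity above, this is equality in Cauchy--Schwarz, so either $\lambda_1=0$ or $\lambda_2=\mathcal F\lambda_1$ on the support. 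Without it, even your $\alpha\le2$ route via $\ueps^2/\seps$ does not close, because that computation relies on $\lambda_2=\mathcal F\lambda_1$ to reach $\tfrac13\int\lambda_1^4\diff\pi_{t,x}+(\int\lambda_1^2\diff\pi_{t,x})^2=0$.

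Several technical steps are also wrong as stated.
\begin{enumerate}
\item Testing the $\ueps$-equation with $\ueps^{p-1}$ gives no $L^\infty$ bound, since the cross-diffusion term $-(p-1)\int\ueps^{p-1}\partial_x\ueps\,\partial_x p(\seps)$ has no sign. One must work with $\seps$, whose advection is not a transport of $\seps$ and has to be absorbed into the dissipation with $\gamma$-dependent constants; hence Alikakos' iteration.
\item Remainders merely bounded in measures are not compact in $H^{-1}_{\loc}$ when the fluxes are only in $L^2$, because Murat's lemma needs a $W^{-1,q}$ bound with $q>2$. This is why the paper uses compactness in $(W^{1,\infty}_0)^*$ together with uniform integrability of $\mathbf{a}_\eps\cdot\mathbf{b}_\eps$.
\item Off $\mathcal S$ you invoke strong convergence of $\partial_x\seps^\alpha$, which you only obtain at the end from the identified flux (or from your unproven dichotomy). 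There $\mathcal F=0$, and the first Div--Curl identity already gives $\overline{\tfrac us\,\partial_x s^\alpha}=\tfrac us\,\partial_x s^\alpha$ directly.
\end{enumerate}
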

The same result can be proved on a periodic domain $\mathbb{T}$. It is also not difficult to extend this result to systems with additional growth terms of the form 
\begin{equation}\label{eq:general_cross_diffusion_intro_short_velocity_noncons}
\begin{split}
\partial_t u &= \partial_x(u\, \partial_x p(s)) + \partial_x(u \, \partial_x \mathcal{V}^1[u,v]) + u\, G^1(s),\\
\partial_t v &= \partial_x(v\, \partial_x p(s)) + \partial_x(v \, \partial_x \mathcal{V}^2[u,v]) + v\, G^2(s),
\end{split}
\end{equation}
where $G^i: [0,\infty)\to \R$ and $G^i \in L^{\infty}(\R)$. In tissue-growth processes, the additional source term acts to inhibit cell proliferation at high pressure levels \cite{MR3162474}. We stress, however, that we cannot handle the case where $G$ depends on the individual species $u$ and $v$, as our method does not yield strong compactness of $\{\ueps\}$ and $\{\veps\}$, unless the set $\mathcal{S}$ is the whole space $[0,T]\times \mathbb{R}$ (this is the case, for instance, when $\partial_x\mathcal{V}^1= -1$ and $\partial_x\mathcal{V}^2 = 1$). We briefly outline the necessary modifications required to extend Theorem \ref{thm:main} to equation \eqref{eq:general_cross_diffusion_intro_short_velocity_noncons} in Section~\ref{sect:extension_nonconservative}.   

\begin{rem}\label{rem:Darcy_law_weights}
We may also consider Darcy’s law \eqref{eq:darcy_law_introduction} with the pressure depending not on the total density $s = u + v$, but instead on a weighted sum $\widetilde{s} = c_u\,u + c_v\,v$. This formulation is motivated by the fact that pressure may reflect physical characteristics such as cell size. For instance, five cells of type $u$ may occupy the same volume as ten cells of type $v$; see \cite{MR4907570}. This case can be reduced to the standard setting $c_u = c_v = 1$. Indeed, if $u, v$ solve the system where the pressure depends on the weighted sum $\widetilde{s}$ with kernels $\widetilde{K^{i,j}}$ and velocities $\widetilde{V^i}$, then $c_u\,u, c_v\,v$ solve the original system~\eqref{eq:general_cross_diffusion_intro} with kernels ${K^{i,1}} = \frac{1}{c_u} \widetilde{K^{i,1}},
{K^{i,2}} = \frac{1}{c_v} \widetilde{K^{i,2}}$ (the velocity fields $\widetilde{V^i}$ do not change).
\end{rem}

{\bf Notation.} We use standard notation for function spaces. For $\Omega \subset \mathbb{R}$ and $1~\le~p,q~\le~\infty$, we denote by $L^p(0,T;L^q(\Omega))$ the usual Bochner-Lebesgue space. More generally, for an open set $U \subset \mathbb{R}^n$, we write $L^p(U)$ and $W^{1,p}(U)$ for the Lebesgue and Sobolev spaces, with negative-order spaces $W^{-1,p}(U)$ defined as a dual space of $W^{1,p'}_0(U)$ where $\frac{1}{p}+\frac{1}{p'}=1$; as usual, $H^k(U)=W^{k,2}(U)$. In particular, we shall use spaces on space–time domains such as $L^p((0,T)\times\mathbb{R})$ or $H^{-1}((0,T)~\times~\mathbb{R})$. If $1~<~p,q~<~\infty$, bounded sequences in these spaces are weakly compact; if $p=\infty$ and/or $q=\infty$, they are weakly$^*$ compact. In the case $p=1$ or $q=1$, boundedness is not sufficient and we use the Dunford–Pettis theorem \cite[Theorem~1.38]{MR1857292} to obtain weak compactness. We denote by $\mathcal{D}'((0,T)\times\R)$ the space of distributions and by $\mathcal{M}([0,T]\times\mathbb{R})$ the space of bounded Radon measures, identified with the dual of $C^0([0,T]\times\mathbb{R})$ (space of continuous functions vanishing at infinity). In this space, bounded sets are weakly$^*$ compact. We write $L^p(0,T;L^q_{\loc}(\Omega))$ for functions belonging to $L^p(0,T;L^q(K))$ for every compact $K\subset\Omega$. Local versions in time and/or space are defined analogously for Lebesgue, Sobolev (e.g. $H^{-1}_{\loc}((0,T)\times\mathbb{R})$), and measure spaces (e.g. $\mathcal{M}_{{\loc}}([0,T]\times\mathbb{R})$). Weak and strong convergence in these local spaces are understood in the usual sense, i.e. convergence holds in the corresponding space restricted to every compact subset. Whenever the sequence depends on the parameter $\eps$, we always assume that $\eps \in (0,1)$ and write $\{\ueps\}$, omitting the subscript (that is, we do not write $\{\ueps\}_{\eps \in (0,1)}$). Many sequences will converge only weakly, so the limits of nonlinear expressions cannot, in general, be identified with the nonlinear function of the limit; to denote such weak limits, we use the overline notation. For example, if $u_\eps \weak u$ weakly, we write $\overline{f(u)}$ to denote the weak limit of $\{f(u_\eps)\}$, which in general differs from $f(u)$; see Proposition \ref{prop:basic_convergences} for more examples. Finally, if $E$ is a Banach space, $E^*$ denotes its dual and $C(0,T; E)$ is a Banach space of continuous functions from $[0,T]$ to~$E$. 

{\bf Structure of the paper.} Section~\ref{sect:sketch_of_the_proof} contains a detailed sketch of the proof. Section~\ref{sect:a_priori_estimates_compactness} gathers the preparatory steps. We first present the standard a priori estimates for \eqref{eq:general_cross_diffusion_intro_viscosity} in Section~\ref{subsect:aprioriestimates}. In Section~\ref{subsect:basic_compactness} we establish the corresponding weak compactness results. Sections~\ref{subsect:cons_laws_all_alpha} and~\ref{subsect:cons_law_u2_over_s} introduce two new conservation laws, while in Section~\ref{subsect:dissipation_measure_gradient_squared} we quantify the defect measure $|\partial_x \seps^{\alpha} - \partial_x s^{\alpha}|^2$ as $\eps\to0$. Section~\ref{sect:young_measures_compensated_compactness} develops the more advanced tools required for the proof of Theorem~\ref{thm:main}. In Section~\ref{subsect:young_measures} we introduce Young measures while in Section~\ref{subsect:div-curl-lemma} we discuss the compensated compactness approach. In Section~\ref{subsect:existence_of_slns_in_lang_of_yms} we reduce the proof of the existence of a weak solution to showing that a certain integral involving the Young measure vanishes. The proof of this final statement together with the proof of Theorem \ref{thm:main} is given in Section~\ref{sect:proof_main_result} and is divided into three steps, carried out in Sections~\ref{subsect:support_is_a_line}, \ref{subsect:the_case_that_YM_supp_on_a_nonzero_line} and \ref{sect:main_proof_after_two_steps}. For $\alpha \leq 2$, the second step can be proved by a simpler argument, presented in Section~\ref{subsect:alternative_proof_alpha_at_most_3}. Finally, in Section~\ref{sect:extension_nonconservative}, we outline the necessary modifications required to extend the reasoning to the nonconservative case \eqref{eq:general_cross_diffusion_intro_short_velocity_noncons}.

\section{Sketch of the proof}\label{sect:sketch_of_the_proof}

First of all, in Section \ref{subsect:aprioriestimates} we obtain standard a priori estimates for solutions to \eqref{eq:general_cross_diffusion_intro_viscosity}. Namely, $\{\ueps\}$ and $\{\veps\}$ are bounded in $L^{\infty}(0,T; L^p(\R))$ for all $p \in [1,\infty]$, while $\{\partial_x \seps^{\alpha}\}$ is bounded in $L^2((0,T)\times\R)$. Then, in Section \ref{subsect:basic_compactness}, the Aubin-Lions lemma yields strong compactness of $\{\seps\}$ in $L^{p}((0,T)\times\R)$ for all $p \in [1,\infty)$. On the other hand, the Banach-Alaoglu theorem provides weak compactness of $\{\ueps\}$, $\{\veps\}$, and $\{\partial_x \seps^{\alpha}\}$ in their respective spaces. The remaining difficulty is to pass to the limit in the term $\frac{\ueps}{\seps}\, \partial_x \seps^{\alpha}$, which is the product of two sequences that converge only weakly.\\

It seems quite challenging to obtain better estimates that would justify the convergence directly. Therefore, we will try to identify the limit without deriving stronger estimates. We consider the Young measure $\pi_{t,x}$ of the sequence  $(\frac{\ueps}{\seps} - \frac{u}{s},\, \partial_x \seps^{\alpha} - \partial_x s^{\alpha})$, see Section~\ref{subsect:young_measures} for a rigorous explanation. For each $(t,x)$, this is a measure on $\R^2$, which is the set of possible values of the sequence. Roughly speaking, whenever $\{f(t,x,\frac{\ueps}{\seps} - \frac{u}{s},\, \partial_x \seps^{\alpha} - \partial_x s^{\alpha})\}$ is weakly compact in $L^1_{\loc}((0,T)\times\R)$, we have the representation (up to a subsequence)
\begin{equation}\label{eq:representation_weak_limits_sketch_of_the_proof_YMs}
f\left(t,x,\frac{\ueps}{\seps} - \frac{u}{s},\, \partial_x \seps^{\alpha} - \partial_x s^{\alpha}\right) \weak \int_{\R^2} f(t,x,\lambda_1, \lambda_2) \diff \pi_{t,x}(\lambda_1, \lambda_2) \mbox{ weakly in } L^1_{\loc}((0,T)\times\R).
\end{equation}
This type of difference appears naturally in studies of how much the sequences fail to converge strongly, see e.g. the defect measure in \cite{MR1034481}. To motivate our choice of the sequence, we write 
\begin{align*}
\frac{\ueps}{\seps}\, \partial_x \seps^{\alpha}- \frac{u}{s}\, \partial_x s^{\alpha} &= \left(\frac{\ueps}{\seps} - \frac{u}{s} \right) \, \left(\partial_x \seps^{\alpha} - \partial_x s^{\alpha} \right) + \,\left( \partial_x \seps^{\alpha} - \partial_x s^{\alpha}\right)\, \frac{u}{s} +\left(\frac{\ueps}{\seps}- \frac{u}{s} \right) \,  \partial_x s^{\alpha}\\
&\weak \int_{\R^2} \lambda_1\, \lambda_2 \diff \pi_{t,x}(\lambda_1,\lambda_2),
\end{align*}
where the last two terms converge to zero by weak convergence (some care is needed if $s=0$, but in that case we prove that $\partial_x s^{\alpha}=0$, see \ref{conv:der_s_alpha_zero_in_vacuum} in Proposition \ref{prop:basic_convergences}). Hence, the proof of existence is concluded once we show that $\int_{\R^2} \lambda_1\, \lambda_2 \diff \pi_{t,x}(\lambda_1,\lambda_2) = 0$ for a.e. $(t,x)$.\\

In order to prove this identity, we will use the equations satisfied by $\ueps$, $\veps$, and some nonlinear functions of them to obtain information on $\pi_{t,x}$. Roughly speaking, the product of weakly converging sequences is usually not the weak limit of the product, but certain combinations of products may have this property. The theory of compensated compactness collects results in this spirit; see Section \ref{subsect:div-curl-lemma} for a more in-depth discussion. For instance, consider two continuity equations 
$$
\partial_t U_{\eps}= \partial_x F_{\eps} + R_{\eps}, \qquad \partial_t V_{\eps} = \partial_x G_{\eps} + S_\eps \qquad \mbox{ on } (0,T)\times \R,
$$
with $\{R_{\eps}\}$ and $\{S_{\eps}\}$ compact in $H^{-1}_{\loc}((0,T)\times\R)$. Under some integrability assumptions, the celebrated Div-Curl Lemma implies that
$$
F_{\eps}\, V_{\eps} - U_{\eps}\, G_{\eps} \weak \overline{F}\, \overline{V} - \overline{U}\, \overline{G}
$$ 
in the sense of distributions, where $\overline{F}$, $\overline{V}$, $\overline{U}$, $\overline{G}$ are the weak limits in $L^2_{\loc}((0,T)\times\R)$ of $\{F_{\eps}\}$, $\{V_{\eps}\}$, $\{U_{\eps}\}$, $\{G_{\eps}\}$, respectively. Our strategy is to express all the weak limits in terms of the Young measure in order to obtain constraints on it.\\

The key observation proved in Section \ref{subsect:support_is_a_line} is that, at points where the integral does not vanish, the Young measure associated with the sequence is supported on at most a straight line passing through $(0,0)$. Intuitively, this means that oscillations (preventing strong compactness) of $\frac{\ueps}{\seps}$ and $\partial_x \seps^{\alpha}$ are correlated. This property follows from two observations:
\begin{itemize}
\item $\int_{\R^2} \lambda_1\,\lambda_2 \diff \pi_{t,x}(\lambda_1, \lambda_2)= \mathcal{F}\,\int_{\R^2} \lambda_1^2 \diff \pi_{t,x}(\lambda_1, \lambda_2)$, where $\mathcal{F}=\mathcal{F}(t,x)$ is defined in \eqref{eq:definition_quantity_F_necessary_later}. This follows from the compensated compactness approach applied directly to the two PDEs in \eqref{eq:general_cross_diffusion_intro_viscosity}.
\item $\int_{\R^2} \lambda_2^2 \diff \pi_{t,x}(\lambda_1, \lambda_2) \leq \mathcal{F}\, \int_{\R^2} \lambda_1\,\lambda_2 \diff \pi_{t,x}(\lambda_1, \lambda_2)$, which follows from the fact that the weak limit of $|\partial_x \seps^{\alpha}-\partial_x s^{\alpha}|^2$ coincides with the weak limit of $\mathcal{F}\,\left(\frac{\ueps}{\seps}\, \partial_x \seps^{\alpha}- \frac{u}{s}\, \partial_x s^{\alpha}\right)$; see Section~\ref{subsect:dissipation_measure_gradient_squared}. The latter essentially exploits the equation for the sum $\seps$, in which we can pass to the limit $\eps \to 0$ (the limit of every term can be identified). 
\end{itemize}
These two observations imply that there is equality in the Cauchy-Schwarz inequality 
$$
\int_{\R^2} \lambda_1^2 \diff \pi_{t,x}(\lambda_1, \lambda_2) \, \int_{\R^2} \lambda_2^2 \diff \pi_{t,x}(\lambda_1, \lambda_2) = \left| \int_{\R^2} \lambda_1 \, \lambda_2 \diff \pi_{t,x}(\lambda_1, \lambda_2) \right|^2,
$$
which implies that either $\int_{\R^2} \lambda_1^2 \diff \pi_{t,x}(\lambda_1, \lambda_2) = 0$ or $\lambda_2 = \mathcal{F}\, \lambda_1$. \\

In order to prove that $\int_{\R^2} \lambda_1\, \lambda_2 \diff \pi_{t,x}(\lambda_1,\lambda_2) = 0$, we only need to consider the case $\lambda_2 = \mathcal{F}\, \lambda_1$, where $\mathcal{F}$ is as above. This imposes a restriction on the support and has important consequences for the analysis of weak limits. By the representation \eqref{eq:representation_weak_limits_sketch_of_the_proof_YMs}, we have that the sequences
$$
\left\{ f\left(t,x,\frac{\ueps}{\seps} - \frac{u}{s},\, \partial_x \seps^{\alpha} - \partial_x s^{\alpha}\right) \right\}, \left\{ f\left(t,x,\frac{\ueps}{\seps} - \frac{u}{s}, \mathcal{F}\, \left(\frac{\ueps}{\seps} - \frac{u}{s}\right) \right) \right\},
$$
have the same weak limits whenever they are weakly compact in $L^1_{\loc}((0,T)\times\R)$. In particular, we can eliminate $\partial_x \seps^{\alpha}$ from the identities. We exploit this observation in Section \ref{subsect:the_case_that_YM_supp_on_a_nonzero_line}, together with a new conservation law from Section \ref{subsect:cons_laws_all_alpha}, in the form
\begin{equation}\label{eq:sketch_of_the_proof_first_cons_law}
\partial_t E_{\eps}- \partial_x \big((E_{\eps} + \seps)\,\partial_x p(\seps) + \ueps  \log \ueps\, \partial_x \mathcal{V}^1[\ueps, \veps] + \veps  \log \veps\, \partial_x \mathcal{V}^2[\ueps, \veps]\big) =g_{\eps} + h_{\eps},
\end{equation}
where $E_{\eps} =\ueps \log \ueps + \veps \log \veps$, $h_{\eps} \to 0$ in $L^2(0,T; H^{-1}(\R))$ and $\{g_{\eps}\}_{\eps \in (0,1)}$ is bounded in $L^1((0,T)\times\R)$. We apply compensated compactness again as explained above (although now the remainder is not compact in $H^{-1}_{\loc}((0,T)\times\R)$ and one has to use Theorem \ref{thm:div_curl_lemma_version_CR}). The resulting identity is technically challenging to analyze due to its nonlinear structure arising from the logarithmic terms. We therefore perform several decompositions and apply delicate estimates to the identity obtained via compensated compactness. Combining these with the relation $\lambda_2 = \mathcal{F}\, \lambda_1$, we conclude that the Young measure is in fact a Dirac mass concentrated at $(0,0)$. When $\alpha \leq 2$, we can use another conservation law (see Section \ref{subsect:cons_law_u2_over_s})
\begin{equation}\label{eq:sketch_of_the_proof_second_cons_law}
\partial_t \left(\frac{\ueps^2}{\seps}\right) - \partial_x\left(\frac{1}{\alpha}\,\frac{\ueps^2}{\seps^2}\, \partial_x \seps^{\alpha} \! + \! \frac{\ueps^2}{\seps}\,  \partial_x \mathcal{V}^1[\ueps, \veps]\! -\! \frac{\ueps^3}{3 \, \seps^2}\, \partial_x (\mathcal{V}^1[\ueps, \veps] \!-\! \mathcal{V}^2[\ueps, \veps])  \right)\! = \! g_{\eps} +  h_{\eps},
\end{equation}
where a small difference here is that $\{g_{\eps}\}$ is bounded only in $L^1(0,T; L^1_{\loc}(\R))$. The computations leading to the final conclusion, exploiting \eqref{eq:sketch_of_the_proof_second_cons_law}, are substantially simpler (see Section~\ref{subsect:alternative_proof_alpha_at_most_3} for this alternative argument). The final proof of Theorem \ref{thm:main} is presented in Section~\ref{sect:main_proof_after_two_steps}. \\

Finally, once we know that $\int_{\R^2} \lambda_1\,\lambda_2 \diff \pi_{t,x}(\lambda_1, \lambda_2)=0$, we can prove the convergences claimed in Theorem \ref{thm:main} by the usual energy considerations; see Section \ref{subsect:existence_of_slns_in_lang_of_yms}. Last but not least, we remark that this programme succeeds only because the Young measure was shown to be supported on a line, which substantially simplifies the identities under consideration. Without this observation, the computations would fail, since we would be unable to characterize the weak limits of joint nonlinear functions of $\frac{\ueps}{\seps}$ and $\partial_x \seps^{\alpha}$. 

\section{A priori estimates and some compactness}\label{sect:a_priori_estimates_compactness}

\subsection{A priori estimates}\label{subsect:aprioriestimates} First, we obtain a priori estimates for the solutions of~\eqref{eq:general_cross_diffusion_intro_viscosity}. While they are quite standard compared to the rest of the paper, they seem to be new in the fast-diffusion regime (particularly, the $L^{\infty}$ estimates). Moreover, one has to be careful as we work in the full range of exponents $\alpha \in (0, \infty)$ so for instance, when $\alpha \in (0,\frac{1}{3}]$, it is well-known that the energy $\int_{\R} \seps^{\alpha} \diff x$ is no longer integrable, nor is $\int_{\R} \seps\, |x|^2 \diff x$, which can be deduced by looking at the Barenblatt profile, see e.g. \cite[eq. (2.4)]{MR1986060}.
\begin{proposition}\label{prop:uniform_estimates_viscosity}
Let $\ueps$, $\veps$ be the solution of \eqref{eq:general_cross_diffusion_intro_viscosity}. Then, for all $T>0$, the following sequences are bounded: 
\begin{enumerate}[label=(E\arabic*)]
\item\label{est:moment} $\{\ueps |x|\}$, $\{\veps |x|\}$ in $L^{\infty}(0,T; L^1(\R))$, 
\item\label{est:gradient_from_entropy} $\{\partial_x \seps^{\frac{\alpha}{2}}\}$ in $L^2(0,T; L^2(\R))$,
\item\label{est:crucial_estimate_gradient_power_alpha} $\{\partial_x \seps^{\alpha}\}$ in $L^2(0,T; L^2(\R))$,
\item\label{est:conservation} $\{\ueps\}$, $\{\veps\}$ in $L^{\infty}(0,T; L^p(\R))$ for all $p\in [1,\infty]$,
\item\label{est:ulogu_vlogv} $\{\ueps \log \ueps\}$, $\{\veps \log \veps \}$ in $L^{\infty}(0,T; L^p(\R))$ for all $p\in [1,\infty]$,
\item\label{est:ulogu_vlogv_moment} $\{\ueps \log \ueps\,|x|^{\frac{1}{2}}\}$, $\{\veps \log \veps|x|^{\frac{1}{2}} \}$ in $L^{\infty}(0,T; L^1(\R))$ ,
\item\label{est:entropy_derivatives_u_v_eps_signular} $\{\sqrt{\eps}\, \partial_x \sqrt{\ueps}\}$, $\{\sqrt{\eps}\, \partial_x \sqrt{\veps}\}$ in $L^2((0,T)\times\R)$,
\item\label{est:energy_derivatives_u_v_eps_signular} $\{\sqrt{\eps} \, \partial_x \ueps\}$, $\{\sqrt{\eps}\, \partial_x \veps\}$ in $L^2((0,T)\times \R)$,
\item\label{est:partial_t_neg_ss} $\{\partial_t \ueps\}$, $\{\partial_t \veps\}$ in $L^2(0,T; H^{-1}(\R))$,
\item\label{est:partial_x_seps_without_any_power} $\{\partial_x \seps\}$ in $L^2((0,T)\times \R)$ for $\alpha\leq2$.
\end{enumerate}
\end{proposition}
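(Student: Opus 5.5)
We test the viscous system \eqref{eq:general_cross_diffusion_intro_viscosity} against suitable functions and close the resulting bounds with Gronwall's inequality. The order is: mass and moment, then entropy, then the $L^\infty$ and $L^p$ bounds, then the derived estimates. Throughout, $A_i:=\partial_x\mathcal{V}^i[\ueps,\veps]$ and $\partial_x A_i$ are bounded in $L^\infty$ by Assumption~\ref{ass:velocity_kernels_main}, because $\|K\ast u\|_{W^{2,\infty}}\le \|K\|_{W^{2,\infty}}\|u\|_{L^1}$ (derivatives only) and mass is conserved.

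\emph{Moment \ref{est:moment}.} Nonnegativity follows from the maximum principle, and mass conservation gives $p=1$ in \ref{est:conservation}. For \ref{est:moment} we test with a smoothed $\sqrt{1+x^2}$, whose derivative is bounded by $1$ and whose second derivative is bounded. The drift and viscous terms are then bounded by $C\|\ueps\|_{L^1}$. The pressure term, summed over both species, is $\int \partial_x p(\seps)\,\seps\,\partial_x\sqrt{1+x^2}$. Since $\seps\,\partial_x p(\seps)=\partial_x\bigl(\tfrac{\alpha-1}{\alpha}\seps^{\alpha}\bigr)$, integrating by parts leaves $\int \seps^\alpha\,\partial_x^2\sqrt{1+x^2}$. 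This is integrable even for small $\alpha$, because $\partial_x^2\sqrt{1+x^2}\sim |x|^{-3}$ and H\"older with the first moment gives $\int \seps^\alpha (1+|x|)^{-3}\le C$ for all $\alpha\in(0,1)$. Each species separately needs more care, since $\int\ueps\,\partial_x p(\seps)\,\phi'$ is not a derivative. I would handle $s$ first and then obtain $u$ from $u\le s$: the moment of $u$ is dominated by the moment of $s$.

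\emph{Entropy \ref{est:gradient_from_entropy}, \ref{est:entropy_derivatives_u_v_eps_signular}.} Test the $u$-equation with $\log\ueps$ and the $v$-equation with $\log\veps$, and add. Using $\ueps\partial_x\log\ueps+\veps\partial_x\log\veps=\partial_x\seps$, the pressure part gives $-\int p'(\seps)|\partial_x\seps|^2 = -c_\alpha\int|\partial_x\seps^{\alpha/2}|^2$. The viscous part gives $-4\eps\int|\partial_x\sqrt{\ueps}|^2$. The drift part is $\int \ueps A_1\partial_x\log\ueps=\int A_1\partial_x\ueps=-\int\partial_xA_1\,\ueps$, which is bounded. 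The entropy $\int \ueps\log\ueps$ is bounded below via the first moment (Carleman-type bound $\int u\log^- u\le C+\int u|x|$), so we obtain \ref{est:gradient_from_entropy} and \ref{est:entropy_derivatives_u_v_eps_signular}.

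\emph{$L^\infty$ bounds \ref{est:conservation}, \ref{est:energy_derivatives_u_v_eps_signular}.} This is the main obstacle, since the species cannot be compared individually. I would test with $\ueps^{p-1}$ and $\veps^{p-1}$. The pressure part then gives $-(p-1)\int \ueps^{p-1}p'(\seps)\partial_x\ueps\,\partial_x\seps$, which has no sign. The cleaner route is to first work with $\seps$, whose equation is $\partial_t\seps=\partial_x(\seps\partial_xp(\seps))+\partial_x(\ueps A_1+\veps A_2)+\eps\partial_x^2\seps$. Testing it with $\seps^{p-1}$, the drift gives $(p-1)\int \seps^{p-2}\partial_x\seps\,(\ueps A_1+\veps A_2)$, and since $\ueps,\veps\le\seps$ this is at most $C(p-1)\int\seps^{p-1}|\partial_x\seps|$. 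This is absorbed by Young's inequality into the good terms $\int\seps^{p-1+\alpha-2}|\partial_x\seps|^2$ and $\eps\int\seps^{p-2}|\partial_x \seps|^2$. The remainder is $C(p-1)\int \seps^{p+1-\alpha}$ in the pressure case, which for $\alpha\ge1$ is controlled by interpolation. For $\alpha<1$ this remainder is subcritical and handled by Gagliardo--Nirenberg in one dimension. Then run a Moser/Alikakos iteration with $p=2^k$ and track constants to reach $L^\infty$. The $\eps$-independence must come from the pressure dissipation, not the viscosity, and this is the delicate point. Once $\seps\in L^\infty$, we get $\ueps,\veps\le\seps$ bounded.

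\emph{Remaining estimates.} Testing the $s$-equation with $\seps^{\alpha}$ (or with $p(\seps)$ shifted appropriately) gives $\int \seps|\partial_xp(\seps)|^2\sim\int|\partial_x\seps^{\alpha}|^2\seps^{1-\alpha}\cdot$const. Combined with the $L^\infty$ bound this yields \ref{est:crucial_estimate_gradient_power_alpha} for $\alpha\le1$. For general $\alpha$, testing with $\seps^{\alpha}$ directly gives $\int \seps\,p'(\seps)\alpha\seps^{\alpha-1}|\partial_x\seps|^2=c\int|\partial_x\seps^\alpha|^2$, with the drift absorbed by Young's inequality and the entropy bound. Testing with $\ueps$ gives \ref{est:energy_derivatives_u_v_eps_signular}; the cross term $\int \ueps\,\partial_x p(\seps)\,\partial_x\ueps$ is controlled by $\tfrac{\eps}{2}\|\partial_x\ueps\|^2$ plus $\eps^{-1}$ terms, so I would instead use $\ueps\le\seps$ together with \ref{est:crucial_estimate_gradient_power_alpha}. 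Next, \ref{est:ulogu_vlogv} follows from \ref{est:conservation}, and \ref{est:ulogu_vlogv_moment} from Cauchy--Schwarz with the moment bound. The bound \ref{est:partial_t_neg_ss} is read off the equation, since all fluxes are in $L^2$. Finally, \ref{est:partial_x_seps_without_any_power} follows from $\partial_x\seps=\seps^{1-\alpha}\partial_x\seps^{\alpha}/\alpha$ together with boundedness of $\seps^{1-\alpha}$ for $\alpha\le1$; for $1<\alpha\le2$ write $\partial_x\seps=\tfrac{2}{\alpha}\seps^{1-\alpha/2}\partial_x\seps^{\alpha/2}$ and use \ref{est:gradient_from_entropy}.
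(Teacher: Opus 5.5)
Your overall architecture matches the paper's: the entropy from testing with $\log\ueps$, $\log\veps$; an Alikakos iteration on $\seps$ alone, using $\ueps,\veps\le\seps$; and derived bounds. However, your derivation of (E8) fails. Testing the $u$-equation with $\ueps$ gives $\frac12\frac{d}{dt}\int_{\R}\ueps^2\diff x+\eps\int_{\R}|\partial_x\ueps|^2\diff x=-\int_{\R}\ueps\,\partial_x p(\seps)\,\partial_x\ueps\diff x-\int_{\R}\ueps\,\partial_x\mathcal{V}^1[\ueps,\veps]\,\partial_x\ueps\diff x$. Using $\ueps\le\seps$ together with (E3) only bounds the cross term by $\frac1\alpha\|\partial_x\seps^{\alpha}\|_{L^2}\|\partial_x\ueps\|_{L^2}$. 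The only term that can absorb $\|\partial_x\ueps\|_{L^2}$ is $\eps\|\partial_x\ueps\|_{L^2}^2$, so Young's inequality necessarily costs $\eps^{-1}\|\partial_x\seps^{\alpha}\|_{L^2}^2$. In other words, the remedy you propose is exactly the estimate that produces the $\eps^{-1}$. What you actually obtain is $\eps\|\partial_x\ueps\|^2_{L^2((0,T)\times\R)}\le C\eps^{-1}$, i.e.\ only that $\{\eps\,\partial_x\ueps\}$ is bounded in $L^2$. That suffices for (E9), but it is not (E8). Nor does it give $\eps\,\partial_x\ueps\to0$ in $L^2$, which is used later: for the compactness of $\eps\,\partial_x^2\ueps$ in $H^{-1}$ in the Div--Curl arguments, for $h_\eps\to0$ in Proposition~\ref{prop:new_cons_law}, and for the vanishing of $\eps\,\partial_x\seps\,\partial_x\seps^{\alpha}$ in Proposition~\ref{prop:weak_compact_partial_x_seps}. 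The correct argument needs nothing new: $|\partial_x\ueps|=2\sqrt{\ueps}\,|\partial_x\sqrt{\ueps}|\le2\|\ueps\|_{L^\infty}^{1/2}|\partial_x\sqrt{\ueps}|$, so (E8) follows from (E7) and (E4), exactly as in the paper.

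Second, your moment step covers only $\alpha\le1$. There your smooth weight is in fact cleaner than the paper's route. With $\phi=\sqrt{1+x^2}$ the pressure contributes $\frac1\alpha\int_{\R}\seps^{\alpha}\phi''\diff x$ (note that $\seps\,\partial_x p(\seps)=\frac1\alpha\partial_x\seps^{\alpha}$, not $\frac{\alpha-1}{\alpha}\partial_x\seps^{\alpha}$). For $\alpha<1$, H\"older with the mass alone gives $\int_{\R}\seps^{\alpha}\phi''\diff x\le\|\seps\|_{L^1}^{\alpha}\|\phi''\|_{L^{1/(1-\alpha)}}$, and $\alpha=1$ is trivial, so (E1) decouples from the entropy. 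The paper's weight $|x|$ instead produces the point values $\seps^{\alpha}(t,0)$ and $\eps\,\seps(t,0)$, which must be absorbed jointly with the entropy dissipation. For $\alpha>1$, however, you need $\sup_t\int_{\R}\seps^{\alpha}\diff x$ before (E1) closes, and nothing in your ordering provides it. Either add the energy estimate obtained by testing with $p(\seps)$ (the paper's Step~1.2), or move the $L^\infty$ step first, which for $\alpha\ge1$ uses only the mass.

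Two further steps are underspecified rather than wrong.
\begin{itemize}
\item The Alikakos iteration is only named. The paper gets a recursion with constants polynomial in $\gamma$ by writing $\int_{\R}\seps^{\gamma+1-\alpha}\diff x\le\|\seps^{\frac{\gamma+\alpha}{2}+1}\|_{L^\infty}\int_{\R}\seps^{\frac{\gamma-3\alpha}{2}}\diff x$ and $\|\seps^{\frac{\gamma+\alpha}{2}+1}\|_{L^\infty}\le C\gamma\,\|\seps^{3/2}\|_{L^2}\,\|\seps^{\frac{\gamma+\alpha-3}{2}}\partial_x\seps\|_{L^2}$. The last factor is the square root of the dissipation, and $\seps^{3/2}\in L^2((0,T)\times\R)$ comes from the finite-$p$ stage.
\item (E6) via Cauchy--Schwarz requires $\int_{\R}\ueps\log^2\ueps\diff x$ to be bounded. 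That needs the paper's splitting of $\{\ueps<1\}$ into $\{\ueps\ge e^{-|x|^{1/2}}\}$ and $\{\ueps<e^{-|x|^{1/2}}\}$, not the moment bound alone. Similarly, the $p=1$ case of (E5) uses the moment, not only (E4).
\end{itemize}
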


We will need the following computation borrowed from \cite[Prop. 2.1]{elbar2025cross}. 
\begin{lem}\label{lem:estimate_derivative_seps_1_alpha}
Let $\alpha \in (0, \frac{1}{3}]$. Then, there is a constant $C=C(\alpha, \|s^0\|_{L^1_x}, T)$ such that 
$$
\int_0^T \int_{\R} |\partial_x \seps^{1-\alpha}| \diff x \diff t \leq C\, \| \partial_x \seps^{\frac{\alpha}{2}}  \|_{L^2_{t,x}}^{1 + \frac{1-3\alpha}{1+\alpha}}.
$$
\end{lem}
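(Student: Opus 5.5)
The plan is to factor the derivative through $\partial_x \seps^{\alpha/2}$, apply Cauchy--Schwarz, and control the leftover factor by interpolating between $L^1$ and $L^\infty$. The $L^\infty$ norm comes from a one-dimensional Sobolev-type inequality in which $\partial_x \seps^{\alpha/2}$ appears again.

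\emph{Step 1: factor the derivative.} For $\eps>0$ the solution $\seps$ is smooth and positive, so the chain rule applies to all powers. Write $w_\eps := \seps^{\alpha/2}$. Then
$$
\partial_x \seps^{1-\alpha} = \tfrac{2(1-\alpha)}{\alpha}\, \seps^{1-\frac{3\alpha}{2}}\, \partial_x w_\eps .
$$
Cauchy--Schwarz in $(t,x)$ gives
$$
\int_0^T\!\!\int_{\R} |\partial_x \seps^{1-\alpha}| \diff x \diff t \le C\, \|\partial_x w_\eps\|_{L^2_{t,x}} \Big(\int_0^T\!\!\int_{\R} \seps^{2-3\alpha}\diff x\diff t\Big)^{1/2}.
$$
Since $\alpha\le \frac13$, we have $2-3\alpha\ge 1$. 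Hence
$$
\int_{\R}\seps^{2-3\alpha}\diff x \le \|\seps(t)\|_{L^\infty_x}^{1-3\alpha}\,\|\seps(t)\|_{L^1_x},
$$
and $\|\seps(t)\|_{L^1_x}=\|s^0\|_{L^1_x}$ by nonnegativity and conservation of mass for \eqref{eq:general_cross_diffusion_intro_viscosity} (all terms are in divergence form).

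\emph{Step 2: $L^\infty$ bound.} Set $r=1+\frac1\alpha$, so that $w_\eps^{2r-2}=\seps$. Since $w_\eps^r$ decays at $-\infty$, for every $x$
$$
w_\eps^r(x) \le \int_{\R} |\partial_x w_\eps^r| \diff y = r\int_{\R} w_\eps^{r-1}|\partial_x w_\eps| \diff y \le r\,\|\partial_x w_\eps(t)\|_{L^2_x}\,\|\seps(t)\|_{L^1_x}^{1/2}.
$$
Because $w_\eps^r=\seps^{(1+\alpha)/2}$, this yields
$$
\|\seps(t)\|_{L^\infty_x}\le C\,\|\partial_x w_\eps(t)\|_{L^2_x}^{\frac{2}{1+\alpha}} .
$$

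\emph{Step 3: time integration.} Combining Steps 1 and 2,
$$
\int_0^T\!\!\int_{\R}\seps^{2-3\alpha}\diff x\diff t\le C\int_0^T\|\partial_x w_\eps(t)\|_{L^2_x}^{\frac{2(1-3\alpha)}{1+\alpha}}\diff t .
$$
The exponent $\frac{2(1-3\alpha)}{1+\alpha}$ is at most $2$, so Hölder in time bounds the right-hand side by $C(T)\,\|\partial_x w_\eps\|_{L^2_{t,x}}^{\frac{2(1-3\alpha)}{1+\alpha}}$. Taking the square root and multiplying by the factor $\|\partial_x w_\eps\|_{L^2_{t,x}}$ from Step 1 gives exactly the exponent $1+\frac{1-3\alpha}{1+\alpha}$.

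\emph{Main obstacle.} The main point is choosing the power $r$ in Step 2 so that only the conserved $L^1$ norm appears. Justifying the pointwise inequality also requires decay of $\seps^{(1+\alpha)/2}$ at infinity, which holds for the smooth, integrable viscous solutions. Everything else is routine bookkeeping of exponents.
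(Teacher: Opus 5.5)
Your proof is correct, but it takes a different route from the paper's.

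\textbf{The paper's route.} The paper never bounds $\|\seps(t)\|_{L^\infty_x}$ directly. It starts from the chain
\[
\int_0^T\!\!\int_{\R}\seps^{2-\alpha}\diff x\diff t\le \|s^0\|_{L^1_x}\int_0^T\|\seps^{1-\alpha}\|_{L^\infty_x}\diff t\le \|s^0\|_{L^1_x}\int_0^T\!\!\int_{\R}|\partial_x\seps^{1-\alpha}|\diff x\diff t .
\]
It then does the same factorisation and Cauchy--Schwarz as your Step 1. Next it interpolates $\seps^{2-3\alpha}$ between $\seps$ and $\seps^{2-\alpha}$ by Hölder with respect to the measure $\seps\diff x\diff t$ on the whole space-time cylinder. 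This puts $\int_0^T\!\int_{\R}\seps^{2-\alpha}$ on both sides, so it must be absorbed, which tacitly uses that this integral is finite for fixed $\eps$. The resulting bound is then fed back into the Cauchy--Schwarz estimate.

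\textbf{Your route.} You apply $W^{1,1}(\R)\subset L^\infty(\R)$ to the power $\seps^{(1+\alpha)/2}$, chosen so that Cauchy--Schwarz leaves only the conserved mass. This gives the pointwise-in-time Gagliardo--Nirenberg bound $\|\seps(t)\|_{L^\infty_x}^{1+\alpha}\le C\,\|\partial_x\seps^{\alpha/2}(t)\|_{L^2_x}^2$. After that, $L^1$--$L^\infty$ interpolation and Hölder in time, with exponent $\frac{2(1-3\alpha)}{1+\alpha}\in[0,2)$, close the argument with no absorption step. The endpoint $\alpha=\frac13$ is harmless.

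\textbf{Comparison.} Your argument is shorter and gives more information: a bound on $\seps$ in $L^{1+\alpha}(0,T;L^\infty(\R))$, not only the space-time integral of $|\partial_x\seps^{1-\alpha}|$. Your Step 2 is exactly the device the paper itself uses for $\alpha>\frac13$ in Step 2 of the proof of Proposition~\ref{prop:uniform_estimates_viscosity}. Since $1-\alpha<1+\alpha$, it would also directly supply the $L^1(0,T;L^\infty(\R))$ bound on $\seps^{1-\alpha}$ for which the lemma is invoked when $\alpha\le\frac13$. The paper's argument, borrowed from \cite{elbar2025cross}, works directly with $\partial_x\seps^{1-\alpha}$, at the price of this self-referential absorption.
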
 
\begin{proof}
We compute using the conservation of mass $\int_{\R} \seps(t,x) \diff x = \int_{\R} s^0(x) \diff x$:
\begin{align*}
&\int_0^T \int_{\R} \seps^{2-\alpha} \diff x \diff t \leq \|s^0\|_{L^1_x} \, \int_0^T \|\seps^{1-\alpha}\|_{L^{\infty}(\R)} \diff t \leq \int_0^T \int_{\R} |\partial_x \seps^{1-\alpha}| \diff x \diff t \\
&\leq  \|s^0\|_{L^1_x} \,(1-\alpha) \, \int_0^T \int_{\R} \seps^{-\alpha}\, |\partial_x \seps| \diff x \diff t 
\leq \|s^0\|_{L^1_x} \,\frac{2\,(1-\alpha)}{\alpha} \, \int_0^T \int_{\R} \seps^{1-\frac{3}{2}\alpha}\, |\partial_x \seps^{\frac{\alpha}{2}}| \diff x \diff t \\
&\leq \|s^0\|_{L^1_x} \,\frac{2\,(1-\alpha)}{\alpha} \, \left( \int_0^T \int_{\R} \seps^{2-3 \alpha} \diff x \diff t \right)^{\frac{1}{2}} \, \| \partial_x \seps^{\frac{\alpha}{2}}  \|_{L^2_{t,x}}.
\end{align*}
We apply Hölder's inequality with measure $\seps \diff x \diff t$ to estimate the first integral:
$$
 \int_0^T \int_{\R} \seps^{2-3 \alpha} \diff x \diff t \leq \left(\int_0^T \int_{\R} \seps^{2-\alpha} \diff x \diff t\right)^{\frac{1-3\alpha}{1-\alpha}} \, \left(T\, \int_{\R} s^0(x) \diff x  \right)^{\frac{2\alpha}{1-\alpha}}.
$$
Hence, since $1-\frac{1}{2}\,\frac{1-3\alpha}{1-\alpha} = \frac{1+\alpha}{2(1-\alpha)}$, we obtain
$$
\left(\int_0^T \int_{\R} \seps^{2-\alpha} \diff x \diff t  \right)^{\frac{1+\alpha}{2(1-\alpha)}} \leq C\, \| \partial_x \seps^{\frac{\alpha}{2}}  \|_{L^2_{t,x}}.
$$
We then use the first computation again to finally obtain
$$
\int_0^T \int_{\R} |\partial_x \seps^{1-\alpha}| \diff x \diff t \leq C\,  \left(\int_0^T \int_{\R} \seps^{2-\alpha} \diff x \diff t\right)^{\frac{1}{2}\,\frac{1-3\alpha}{1-\alpha}} \, \| \partial_x \seps^{\frac{\alpha}{2}}  \|_{L^2_{t,x}} \leq  C\, \| \partial_x \seps^{\frac{\alpha}{2}}  \|_{L^2_{t,x}}^{1 + \frac{1-3\alpha}{1+\alpha}}.
$$
\end{proof}
We will also need the following lemma proved in \cite[Lemma D.2]{carrillo2024stein}.
\begin{lem}\label{lem:control_negative_parts_of_log}
 Let $W:\R\to [0,\infty)$ be such that $e^{-W} \in L^1(\R)$. Then, for every measurable function $\rho: \R^d \to \R^+$ we have 
\begin{equation}\label{eq:pointwise_lower_bound}
\rho\, \log \rho + \rho\,W  \geq - \frac{2}{e} \, e^{-W/2}. 
\end{equation}
\end{lem}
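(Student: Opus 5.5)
This is a pointwise inequality, so I would prove it separately at each point, reducing it to a one-variable minimization. Fix a point $x$ and set $w := W(x) \ge 0$. If $\rho(x) = 0$, the left-hand side of \eqref{eq:pointwise_lower_bound} is $0$ (with the usual convention $0\log 0 = 0$), while the right-hand side is negative, so there is nothing to prove. It therefore suffices to show that for every $r > 0$ and every $w \ge 0$,
$$
f(r) := r\log r + r\,w \ \ge\ -\tfrac{2}{e}\, e^{-w/2}.
$$

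The function $f$ is smooth and strictly convex on $(0,\infty)$, since $f''(r) = 1/r > 0$. Its derivative is $f'(r) = \log r + 1 + w$, which vanishes only at $r_* = e^{-1-w}$. Hence the global minimum is
$$
f(r_*) = e^{-1-w}(-1-w) + e^{-1-w}w = -e^{-1-w}.
$$
Because $w \ge 0$, we have $e^{-w} \le e^{-w/2}$ and $\tfrac1e \le \tfrac2e$. Therefore
$$
-e^{-1-w} \ \ge\ -\tfrac1e\, e^{-w/2} \ \ge\ -\tfrac2e\, e^{-w/2}.
$$
Evaluating at $w = W(x)$ and $r = \rho(x)$ gives \eqref{eq:pointwise_lower_bound} everywhere.

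There is a second route that explains where the $e^{-W/2}$ in the statement comes from. Write
$$
\rho\log\rho + \rho W = e^{-W/2}\,\big(y\log y\big) + \tfrac12\,\rho W, \qquad y := \rho\, e^{W/2}.
$$
The last term is nonnegative, and $y\log y \ge -1/e$ for all $y \ge 0$, which again gives the bound.

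No step is a real obstacle; the only care needed is the convention at $\rho = 0$ and the use of $W \ge 0$ when comparing $e^{-W}$ with $e^{-W/2}$. The hypothesis $e^{-W}\in L^1(\R)$ plays no role in the pointwise bound. It matters only afterwards: it makes the right-hand side integrable (via $e^{-W/2}$, e.g. for $W = |x|^{1/2}$), and this is how the lemma will control the negative parts of $\ueps\log\ueps$ in estimates such as \ref{est:ulogu_vlogv_moment}.
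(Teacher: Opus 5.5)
Your proof is correct. The paper gives no argument for this lemma and only quotes \cite[Lemma D.2]{carrillo2024stein}, so your pointwise minimization is a complete, self-contained proof rather than a variant of one in the text.

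It also proves more than is stated. The exact minimum $\min_{r\ge 0}(r\log r + r\,w) = -e^{-1-w}$ gives $\rho\log\rho+\rho W \ge -\frac1e e^{-W}$, and this holds for every real-valued $W$. Nonnegativity of $W$ is needed only to weaken $e^{-W}$ to $e^{-W/2}$. Your rescaling route lands directly on the intermediate bound $-\frac1e e^{-W/2}$. The constant $\frac2e$ in the statement is what a cruder case split yields, for instance $\rho\ge e^{-W}$ versus $\rho<e^{-W}$ combined with $\rho\log\rho\ge -\frac2e\sqrt{\rho}$.

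The sharp form is also the one that matches the hypothesis. Integrating it gives $\int(\rho\log\rho+\rho W)\diff x\ge -\frac1e\|e^{-W}\|_{L^1}$ at once. By contrast, $e^{-W}\in L^1(\R)$ does not imply $e^{-W/2}\in L^1(\R)$; take $W=2\log(1+|x|)$. So your closing remark, that the hypothesis makes the right-hand side of \eqref{eq:pointwise_lower_bound} integrable, is not accurate in general.

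This is harmless for the paper, which applies the lemma only with $W$ a multiple of $|x|$, in Steps 1 and 3 of the proof of Proposition \ref{prop:uniform_estimates_viscosity}. The moment bound \ref{est:ulogu_vlogv_moment} is in fact obtained there by a direct three-region splitting, not through this lemma.
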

\begin{proof}[Proof of Proposition \ref{prop:uniform_estimates_viscosity}]
We divide the proof into a few steps. As a preliminary observation, we note that by the conservation of mass $M:= \int_{\R} \seps(t,x) \diff x = \int_{\R} s^0(x) \diff x$, $\int_{\R} \ueps(t,x) \diff x = \int_{\R} u^0(x) \diff x$, $\int_{\R} \veps(t,x) \diff x = \int_{\R} v^0(x) \diff x$. In particular, Assumption \ref{ass:velocity_kernels_main} implies that all the sequences $\{\partial_x \mathcal{V}^1[\ueps,\veps]\}$, $\{\partial_x \mathcal{V}^2[\ueps,\veps]\}$, $\{\partial^2_x \mathcal{V}^1[\ueps,\veps]\}$, $\{\partial^2_x \mathcal{V}^2[\ueps,\veps]\}$ are uniformly bounded in the space $L^{\infty}((0,T)\times\R)$ and we denote the bound by $\| \partial_x \mathcal{V}^1\|_{L^{\infty}}$, $\| \partial_x \mathcal{V}^2\|_{L^{\infty}}$, $\| \partial^2_x \mathcal{V}^1\|_{L^{\infty}}$, $\| \partial^2_x \mathcal{V}^2\|_{L^{\infty}}$, respectively.\\

\underline{Step 1: Proof of \ref{est:moment}, \ref{est:gradient_from_entropy} and \ref{est:entropy_derivatives_u_v_eps_signular}.} We multiply equations in \eqref{eq:general_cross_diffusion_intro_viscosity} by $\log \ueps$ and $\log \veps$, respectively, and sum them up to obtain
\begin{equation*}
\begin{split}
&\partial_t \int_{\R} \ueps \log \ueps \diff x + \partial_t \int_{\R} \veps \log \veps \diff x + \int_{\R} p'(\seps)\, |\partial_x \seps|^2 \diff x + \eps\, \int_{\R} \frac{|\partial_x \ueps|^2}{\ueps} \diff x + \eps\, \int_{\R} \frac{|\partial_x \veps|^2}{\veps} \diff x \\ &=  - \int_{\R} \partial_x \mathcal{V}^1[\ueps,\veps] \, \partial_x \ueps \diff x - \int_{\R} \partial_x \mathcal{V}^2[\ueps,\veps] \, \partial_x \veps \diff x \leq (\| \partial_x^2 \mathcal{V}^1\|_{L^{\infty}} + \| \partial_x^2 \mathcal{V}^2\|_{L^{\infty}}) \, M.
\end{split}
\end{equation*}
We note that $p'(\seps)^{\frac{1}{2}}\, \partial_x \seps = \seps^{\frac{\alpha}{2}-1} \, \partial_x \seps = \frac{2}{\alpha} \partial_x \seps^{\frac{\alpha}{2}}$. Hence, we obtain
\begin{equation}\label{eq:entropy_bound_alpha_between_0_and_13}
\partial_t \int_{\R} \ueps \log \ueps \diff x + \partial_t \int_{\R} \veps \log \veps \diff x +\frac{4}{\alpha^2} \| \partial_x \seps^{\frac{\alpha}{2}} \|_{L^2_x}^2 + \eps\, \int_{\R} \frac{|\partial_x \ueps|^2}{\ueps} \diff x + \eps\, \int_{\R} \frac{|\partial_x \veps|^2}{\veps} \diff x \leq C,
\end{equation}
for a constant $C$ independent of $\eps$. Next, we sum up \eqref{eq:general_cross_diffusion_intro_viscosity} and we multiply by $|x|$ to obtain
\begin{equation}\label{eq:estimate_first_moment_for_alpha_at_most_one_third}
\begin{split}
\partial_t \int_{\R} \seps \,|x| \diff x =& - \int_{\R} \seps\, \partial_x p(\seps) \, \frac{x}{|x|} \diff x - \int_{\R} (\ueps\, \partial_x \mathcal{V}^1[\ueps,\veps]+ \veps\, \partial_x \mathcal{V}^2[\ueps,\veps]) \, \frac{x}{|x|} \diff x\\ & - \eps\int_{\R} \partial_x \seps \, \frac{x}{|x|} \diff x.
\end{split}
\end{equation}
We now consider the last two terms on the (RHS). For the first one
\begin{equation*}
\begin{split}
- \eps\int_{\R} \partial_x \seps \, \frac{x}{|x|} \diff x &= 2\, \eps\, \seps(t,0) \leq 2\, \eps \int_0^{\infty} |\partial_x \ueps| \diff x +2\,\eps \int_0^{\infty} |\partial_x \veps| \diff x \\
&\leq \frac{\eps}{2} \int_{\R} \frac{|\partial_x \ueps|^2}{\ueps} \diff x + \frac{\eps}{2} \int_{\R} \frac{|\partial_x \veps|^2}{\veps} \diff x + 2\,\eps \, M,
\end{split}
\end{equation*}
while for the term with the velocity fields
\begin{equation*}
 - \int_{\R} (\ueps\, \partial_x \mathcal{V}^1[\ueps,\veps]+ \veps\, \partial_x \mathcal{V}^2[\ueps,\veps]) \, \frac{x}{|x|} \diff x \leq (\| \partial_x \mathcal{V}^1\|_{L^{\infty}} + \| \partial_x \mathcal{V}^2\|_{L^{\infty}})\, M.
\end{equation*}
Hence, from \eqref{eq:estimate_first_moment_for_alpha_at_most_one_third}, we deduce
\begin{equation}\label{eq:final_identity_for_evol_1st_moment_before_splitting_2_cases}
\partial_t \int_{\R} \seps \,|x| \diff x \leq - \int_{\R} \seps\, \partial_x p(\seps) \, \frac{x}{|x|} \diff x + \frac{\eps}{2} \int_{\R} \frac{|\partial_x \ueps|^2}{\ueps} \diff x + \frac{\eps}{2} \int_{\R} \frac{|\partial_x \veps|^2}{\veps} \diff x  + C,
\end{equation}
where $C$ does not depend on $\eps \in (0,1)$. We will now combine \eqref{eq:entropy_bound_alpha_between_0_and_13} and \eqref{eq:final_identity_for_evol_1st_moment_before_splitting_2_cases} in order to conclude the proof. We need to split for two cases: $\alpha \leq 1$ and $\alpha > 1$.\\

\underline{Step 1.1: $\alpha \leq 1$.} Note that $ \seps\, \partial_x p(\seps) = \frac{1}{\alpha}\, \partial_x \seps^{\alpha}$ so that we have
$$
- \int_{\R} \seps\, \partial_x p(\seps) \, \frac{x}{|x|} \diff x = -\frac{1}{\alpha} \int_{x>0} \partial_x \seps^{\alpha} \diff x + \frac{1}{\alpha} \int_{x<0} \partial_x \seps^{\alpha} \diff x = \frac{2}{\alpha} \seps^{\alpha}(t,0).
$$
Observe that $\alpha \leq 1$ implies $\alpha \leq \frac{1+\alpha}{2}$ so we can estimate
$$
 \frac{2}{\alpha} \seps^{\alpha}(t,0) \leq \frac{2}{\alpha}  + \frac{2}{\alpha} \seps^{\frac{1+\alpha}{2}}(t,0) \leq  \frac{2}{\alpha}  + \frac{2}{\alpha} \int_{\R} |\partial_x \seps^{\frac{1+\alpha}{2}}| \diff x \leq \frac{2}{\alpha}  + \frac{2\,(1+\alpha)}{\alpha^2} \int_{\R} \seps^{\frac{1}{2}}\, |\partial_x \seps^{\frac{\alpha}{2}}| \diff x.
$$
Hence, integrating in time the estimates above and applying Cauchy-Schwarz inequality with a parameter $\delta$:
\begin{equation*}
\begin{split}
-\! \int_0^t \int_{\R} \seps\, \partial_x p(\seps) \, \frac{x}{|x|} \diff x \diff \tau &\leq 
\frac{2\,t}{\alpha} + \frac{2\,(1+\alpha)}{\alpha^2} \int_0^t \int_{\R} \seps^{\frac{1}{2}}\, |\partial_x \seps^{\frac{\alpha}{2}}| \diff x \diff \tau  \\
&\leq C(t,\alpha, M, \delta) + \delta \, \| \partial_x \seps^{\frac{\alpha}{2}}  \|_{L^2((0,t)\times\R)}^{2}.
\end{split}
\end{equation*}
Plugging this estimate into \eqref{eq:final_identity_for_evol_1st_moment_before_splitting_2_cases}, choosing $\delta = \frac{2}{\alpha^2}$, and integrating in time yields
\begin{equation}\label{eq:first_moment_estimate_after_all_terms_being_estimated}
 \int_{\R} \seps(t,x) \,|x| \diff x  \leq C + \frac{2}{\alpha^2}\, \| \partial_x \seps^{\frac{\alpha}{2}}  \|_{L^2((0,t)\times\R)}^2 + \frac{\eps}{2}\int_0^t \int_{\R} \left(\frac{|\partial_x \ueps|^2}{\ueps} + \frac{|\partial_x \veps|^2}{\veps}\right) \diff x \diff \tau
\end{equation} 
for some new constant $C=C(\alpha, T, M, \int_{\R} s^0 \,|x| \diff x)$. Integrating \eqref{eq:entropy_bound_alpha_between_0_and_13} in time and summing up with \eqref{eq:first_moment_estimate_after_all_terms_being_estimated} we deduce
\begin{equation}\label{eq:ulogu_vlogv_alpha_leq_13_final_estimate}
\begin{split}
\int_{\R} \ueps(t,x) \log & \, \ueps(t,x) \diff x + \int_{\R} \veps(t,x) \log \veps(t,x) \diff x + \int_{\R} \seps(t,x) \,|x| \diff x \\
&+\frac{2}{\alpha^2}\, \| \partial_x \seps^{\frac{\alpha}{2}} \|_{L^2((0,t)\times\R)}^2 + \frac{\eps}{2}\, \int_0^t \int_{\R} \left(\frac{|\partial_x \ueps|^2}{\ueps} + \frac{|\partial_x \veps|^2}{\veps}\right) \diff x \diff \tau  \\
&\leq C + \int_{\R} u^0(x) \log u^0(x) \diff x + \int_{\R} v^0(x) \log v^0(x) \diff x .
\end{split}
\end{equation}
By Lemma \ref{lem:control_negative_parts_of_log}, $\int_{\R} \left(\ueps(t,x)\log \ueps(t,x) +  \ueps(t,x)\,\frac{|x|}{2}\right) \diff x$ is bounded from below by a numerical constant and analogously for $\veps$. This concludes the proof.\\

\underline{Step 1.2: $\alpha > 1$.} We sum up \eqref{eq:general_cross_diffusion_intro_viscosity}, we multiply by $p(\seps)$ and we use Cauchy-Schwarz inequality to obtain
\begin{equation}\label{eq:identity_estimates_alpha_geq_1_energy}
\begin{split}
\frac{\alpha}{\alpha-1} \partial_t \int_{\R} \seps^{\alpha} \diff x + \int_{\R} \seps\, |\partial_x p(\seps)|^2 \diff x = - \!\int_{\R}(\ueps \, \partial_x \mathcal{V}^1[\ueps,\veps]+\veps \, \partial_x \mathcal{V}^2[\ueps,\veps]) \partial_x p(\seps) \diff x\\
\leq \frac{1}{2}(\| \partial_x \mathcal{V}^1\|^2_{L^{\infty}} + \| \partial_x \mathcal{V}^2\|^2_{L^{\infty}})\, M + \frac{1}{2} \, \int_{\R} \seps\, |\partial_x p(\seps)|^2 \diff x.
\end{split}
\end{equation}
It follows that $\{\sqrt{\seps}\, \partial_x p(\seps)\}$ is bounded in $L^2((0,T)\times\R)$. Combining with the bound on $\{\sqrt{\seps}\}$ in $L^2((0,T)\times\R)$, we deduce that the integral $\int_0^T\int_{\R} \left|\seps\, \partial_x p(\seps) \, \frac{x}{|x|} \right| \diff x \diff \tau$ is uniformly bounded, so from \eqref{eq:final_identity_for_evol_1st_moment_before_splitting_2_cases} we obtain a bound on $\{\seps\, |x|\}$ in $L^{\infty}(0,T; L^1(\R))$. The remaining assertions follow now from \eqref{eq:entropy_bound_alpha_between_0_and_13} together with Lemma \ref{lem:control_negative_parts_of_log}, which allows us to control the negative part of the logarithm. \\

\underline{Step 2. Proof of \ref{est:conservation}.} Let $M({\gamma})= \sup_{t\in [0,T]} \left(\int_{\R} \seps^{\gamma} \diff x\right)^{\frac{1}{\gamma}}$. We will first prove that for some constant $C=C(M, \alpha, T)$ independent of $\gamma$ and all $\gamma \geq 15\,\alpha + 2$, $t\in[0,T]$
\begin{equation}\label{eq:Alikakos_estimate_to_run_iterations}
M(\gamma)^{\gamma} \leq \int_{\R} (s^0)^{\gamma} \diff x + C +C\,\gamma^{5} \, M\left( \frac{\gamma}{2}\right)^{\gamma}.
\end{equation}
Having \eqref{eq:Alikakos_estimate_to_run_iterations}, it is well-known that Alikakos' iterations yield the $L^{\infty}$ bounds \cite{Alikakos1979} and the final claim follows by the $L^p$ interpolation. To see that \eqref{eq:Alikakos_estimate_to_run_iterations} implies the $L^{\infty}$ bounds, we first observe that we may assume 
$$
\int_{\R} (s^0)^{\gamma} \diff x + C \leq  C\, \gamma^{5} \,M\left(\frac{\gamma}{2}\right)^{\gamma}
 $$
 for all $\gamma \geq \gamma^*$ for some $\gamma^* \geq 1$. Otherwise, there exists a sequence $\{ \gamma_k\}_{k \geq 0}$ such that $\gamma_k \to +\infty$ as $k \to +\infty$ so that
    $$
\int_{\R} (s^0)^{\gamma_k} \diff x + C \geq  C\, \gamma_k^{5} \,M\left(\frac{\gamma_k}{2}\right)^{\gamma_k}
    $$
from which the uniform boundedness of $\{\seps\}$ follows. Hence, for all $\gamma \geq \max(15\alpha+2,\gamma^*)$
$$
M(\gamma) \leq  (2\,C)^{\frac{1}{\gamma}}\, \gamma^{\frac{5}{\gamma}} \,M\left(\frac{\gamma}{2}\right) = (\tilde{C}\, \gamma)^{\frac{5}{\gamma}} \,M\left(\frac{\gamma}{2}\right)
$$
for a new constant $\tilde{C}$. Let $i \geq 1$ such that $2^i \geq \max(p^*,15\alpha+2)$. Then
   $$
        M( 2^k)  \leq  ( \tilde{C}\, 2^{k} )^{5/{2^k}} M(2^{k-1})  \leq ( \tilde{C}\, 2^{k} )^{5/{2^k}}  ( \tilde{C}\, 2^{(k-1)} )^{5/{2^{k-1}}} M(2^{k-2}) \leq M(2^{i-1}) \prod_{j=i}^k ( \tilde{C}\, 2^{j})^{5/2^{j}} .
   $$
    It follows that $M(2^k)$ is bounded independently of $k$ because
    $$
\lim_{k \to \infty} \prod_{j=i}^k ( \tilde{C}\, 2^{j})^{5/2^{j}} < \infty \iff \sum_{j=i}^\infty \frac{5 \log (\tilde{C}\, 2^{j})}{2^j} < \infty.
    $$
Now, we prove \eqref{eq:Alikakos_estimate_to_run_iterations}. We multiply the PDE for $\seps$ by $\seps^{\gamma-1}$
\begin{align*}
\partial_t \int_{\R} \seps^{\gamma} \diff x +\gamma\, (\gamma-1) \int_{\R}& \seps^{\alpha+\gamma-3}\,|\partial_x \seps|^{2} \diff x\\
& \leq   -\gamma \int_{\R} (\ueps\, \partial_x \mathcal{V}^1[\ueps,\veps]+ \veps\, \partial_x \mathcal{V}^2[\ueps,\veps]) \, \partial_x \seps^{\gamma-1} \diff x\\
&\leq \gamma\,(\gamma-1)\,(\| \partial_x \mathcal{V}^1\|_{L^{\infty}} + \| \partial_x \mathcal{V}^2\|_{L^{\infty}})\, \int_{\R} \seps^{\gamma-1} \, |\partial_x \seps| \diff x,
\end{align*}
where we neglected the term $\eps \int_{\R} \partial_x \seps\, \partial_x \seps^{\gamma-1} \diff x \geq 0$. We write $\seps^{\gamma-1} = \seps^{\frac{\alpha+\gamma-3}{2}}\, \seps^{\frac{-\alpha+\gamma+1}{2}}$ so applying Cauchy-Schwarz with a parameter $\delta_1>0$ to be chosen
\begin{multline*}
(\| \partial_x \mathcal{V}^1\|_{L^{\infty}} + \| \partial_x \mathcal{V}^2\|_{L^{\infty}}) \int_{\R} \seps^{\gamma-1} \, |\partial_x \seps| \diff x \leq \\ \leq \frac{\delta_1\,(\| \partial_x \mathcal{V}^1\|_{L^{\infty}} + \| \partial_x \mathcal{V}^2\|_{L^{\infty}})^2}{2} \int_{\R} \seps^{\alpha+\gamma-3}\,|\partial_x \seps|^{2} \diff x + \frac{1}{2\,\delta_1}\int_{\R} \seps^{-\alpha+\gamma+1} \diff x.
\end{multline*}
Choosing $\delta_1$ so that $\frac{\delta_1}{2}\,(\| \partial_x \mathcal{V}^1\|_{L^{\infty}} + \| \partial_x \mathcal{V}^2\|_{L^{\infty}})^2 \leq \frac{1}{2}$ we obtain
\begin{equation}\label{eq:evolution_of_the_gamma_moment_before_doing_the_interpolation_without_the_rubbish}
\partial_t \int_{\R} \seps^{\gamma} \diff x  + \frac{\gamma\, (\gamma-1)}{2} \int_{\R} \seps^{\alpha+\gamma-3}\,|\partial_x \seps|^{2} \diff x \leq \frac{\gamma\,(\gamma-1)}{2\,\delta_1} \int_{\R} \seps^{-\alpha+\gamma+1} \diff x.
\end{equation}
We claim that \eqref{eq:evolution_of_the_gamma_moment_before_doing_the_interpolation_without_the_rubbish} implies that $\{ \seps \}$ is bounded in $L^{\infty}(0,T; L^q(\R))$ for all $q\in[1,\infty)$ (so that $M(\gamma)$ is finite for all $\gamma\geq 1$). We split the proof for two cases. 
\begin{itemize}
\item If $\alpha\leq \frac{1}{3}$, we have by Lemma \ref{lem:estimate_derivative_seps_1_alpha} and \ref{est:gradient_from_entropy} that $\{\seps^{1-\alpha}\}$ is bounded in $L^1(0,T; L^{\infty}(\R))$ so we estimate $\int_{\R} \seps^{-\alpha+\gamma+1} \diff x \leq \| \seps^{1-\alpha} \|_{L^{\infty}(\R)} \int_{\R} \seps^{\gamma} \diff x$ and we apply Gr\"onwall's inequality.
\item If $\alpha > \frac{1}{3}$, we use that $\{\partial_x \seps^{\frac{\alpha+1}{2}}\}$ is bounded in $L^2(0,T; L^1(\R))$ by \ref{est:gradient_from_entropy} and $\partial_x \seps^{\frac{\alpha+1}{2}} = \frac{\alpha+1}{2}\, \seps^{\frac{\alpha-1}{2}}\, \partial_x \seps=\frac{\alpha+1}{\alpha}\, \seps^{\frac{1}{2}} \, \partial_x \seps^{\frac{\alpha}{2}}$ so that $\{ \seps^{\frac{\alpha+1}{2}}\}$ is bounded in $L^2(0,T; L^{\infty}(\R))$ and Hölder's inequality with measure $\seps \diff x$ implies
$$
\int_{\R} \seps^{-\alpha+\gamma+1} \diff x \leq \|  \seps^{\frac{\alpha+1}{2}} \|_{L^{\infty}(\R)} \int_{\R} \seps^{\gamma + \frac{1-3\alpha}{2} } \diff x \leq 
\|  \seps^{\frac{\alpha+1}{2}} \|_{L^{\infty}(\R)} \left( \int_{\R} \seps^{\gamma} \diff x \right)^{\kappa}\, M^{1-\kappa},
$$
where $\kappa = \frac{\gamma + \frac{-1-3\alpha}{2}}{\gamma-1}\in(0,1)$. Then, \eqref{eq:evolution_of_the_gamma_moment_before_doing_the_interpolation_without_the_rubbish}   gives that $\partial_t \left(\int_{\R} \seps^{\gamma} \diff x\right)^{1-\kappa} \leq C\, \|  \seps^{\frac{\alpha+1}{2}} \|_{L^{\infty}(\R)}$ for some constant $C$ which after integrating in time proves the claim.
\end{itemize}
In particular, we deduce that for all $\alpha \in (0,\infty)$, 
\begin{equation}\label{eq:claim_s_to_power_3_halves_is_in_l2}
\{\seps^{\frac{3}{2}}\} \mbox{ is bounded in } L^2((0,T)\times\R).
\end{equation}
Next, we come back to \eqref{eq:evolution_of_the_gamma_moment_before_doing_the_interpolation_without_the_rubbish} and we estimate
\begin{equation}\label{eq:estimate_Linf_bound_integral_-alph+gamma_first_splitting}
 \int_{\R} \seps^{-\alpha+\gamma+1} \diff x = \int_{\R} \seps^{\frac{\gamma+\alpha}{2} + 1}\, \seps^{\frac{\gamma-3\alpha}{2}} \diff x \leq \| \seps^{\frac{\gamma+\alpha}{2} + 1} \|_{L^{\infty}(\R)} \, \int_{\R} \seps^{\frac{\gamma-3\alpha}{2}} \diff x.
\end{equation}
The term $\| \seps^{\frac{\gamma+\alpha}{2} + 1} \|_{L^{\infty}(\R)}$ can be controlled by means of the dissipation. Indeed, 
\begin{equation}\label{eq:estimate_L_inf_norm_term_half_of_gamma_via_dissipation}
\| \seps^{\frac{\gamma+\alpha}{2} + 1} \|_{L^{\infty}(\R)} \leq  \| \partial_x \seps^{\frac{\gamma+\alpha}{2} + 1} \|_{L^{1}(\R)} \leq \left(\frac{\gamma+\alpha}{2} + 1\right) \|\seps^{\frac{3}{2}} \|_{L^2(\R)} \, \| \seps^{\frac{\gamma+\alpha-3}{2}}\, \partial_x \seps \|_{L^2(\R)},
\end{equation}
Therefore, as $\gamma \geq \alpha$, $\gamma \geq 1$ we have $\frac{\gamma+\alpha}{2} + 1 \leq 2\,\gamma$ so that from \eqref{eq:estimate_Linf_bound_integral_-alph+gamma_first_splitting} we deduce
\begin{align*}
 \int_{\R} \seps^{-\alpha+\gamma+1} \diff x &\leq 2 \, \gamma\,  \| \seps^{\frac{3}{2}} \|_{L^2(\R)} \, \| \seps^{\frac{\gamma+\alpha-3}{2}}\, \partial_x \seps \|_{L^2(\R)} \, \int_{\R} \seps^{\frac{\gamma-3\alpha}{2}} \diff x \\
&\leq \frac{\delta_1}{2}  \| \seps^{\frac{\gamma+\alpha-3}{2}}\, \partial_x \seps \|_{L^2(\R)}^2 + \frac{2\, \gamma^2}{\delta_1}\, \| \seps^{\frac{3}{2}} \|^2_{L^2(\R)} \, \left( \int_{\R} \seps^{\frac{\gamma-3\alpha}{2}} \diff x \right)^2
\end{align*}
for the same $\delta_1$ as above. Coming back to \eqref{eq:evolution_of_the_gamma_moment_before_doing_the_interpolation_without_the_rubbish}, we obtain
\begin{equation}\label{eq:evolution_gamma_moment_last_step_before_Holder}
\partial_t \int_{\R} \seps^{\gamma} \diff x  \leq \frac{\gamma^4}{\delta_1^2} \, \|\seps^{\frac{3}{2}} \|^2_{L^2(\R)} \, \left( \int_{\R} \seps^{\frac{\gamma-3\alpha}{2}} \diff x \right)^2.
\end{equation}
We finally use Hölder's inequality with respect to the measure $\seps \diff x$ to obtain
\begin{equation}\label{eq:Holder_to_adjust_the_exponent_to_gamma_over_2_for_alikakos}
\int_{\R} \seps^{\frac{\gamma-3\alpha}{2}} \diff x \leq \left(\int_{\R} \seps^{\frac{\gamma}{2}} \diff x\right)^{\beta} \, M^{1-\beta}, \qquad \qquad \beta=\frac{\frac{\gamma-3\alpha}{2}-1}{\frac{\gamma}{2}-1},
\end{equation}
where $\beta \in (\frac{4}{5},1]$ since $\gamma \geq 15\alpha + 2$. From \eqref{eq:evolution_gamma_moment_last_step_before_Holder} we deduce
\begin{align*}
\partial_t \int_{\R} \seps^{\gamma} \diff x &\leq \frac{\gamma^4}{\delta_1^2} \, \| \seps^{\frac{3}{2}} \|^2_{L^2(\R)} \, \left( \int_{\R} \seps^{\frac{\gamma}{2}} \diff x \right)^{2\beta}\, M^{2(1-\beta)}\\
&\leq 
\frac{M^{2(1-\beta)}}{\delta_1^2} \,\| \seps^{\frac{3}{2}} \|^2_{L^2(\R)} \, \left(1 +  \gamma^{\frac{4}{\beta}} \,\left( \int_{\R} \seps^{\frac{\gamma}{2}} \diff x \right)^{2}\right).
\end{align*}
We finally observe that $M^{2(1-\beta)} \leq 1+ M^2$ and $\gamma \geq 1$ and $\beta \geq \frac{4}{5}$ imply $\gamma^{\frac{4}{\beta}} \leq \gamma^5$ so that
\begin{equation}\label{eq:final_bound_to_start_alikakos}
\partial_t \int_{\R} \seps^{\gamma} \diff x \leq \frac{1+M^{2}}{\delta_1^2}\,\|\seps^{\frac{3}{2}} \|^2_{L^2(\R)} \, \left(1 +  \gamma^{5} \,\left( \int_{\R} \seps^{\frac{\gamma}{2}} \diff x \right)^{2}\right).
\end{equation}
We integrate in time and letting $C = \frac{1+M^{2}}{\delta_1^2}\,\|\seps^{\frac{3}{2}} \|^2_{L^2((0,T)\times \R)}$ (which is uniformly bounded in $\eps$ by \eqref{eq:claim_s_to_power_3_halves_is_in_l2}) we obtain \eqref{eq:Alikakos_estimate_to_run_iterations} concluding the proof.\\

\underline{Step 3. Proof of \ref{est:ulogu_vlogv}, \ref{est:ulogu_vlogv_moment}.} Clearly, $\{ \ueps \log \ueps \}$, $\{\veps \log \veps\}$ are bounded in $L^{\infty}((0,T)\times \R)$ by \ref{est:conservation}. Hence, to obtain \ref{est:ulogu_vlogv} we need to prove that $\{ \ueps \log \ueps \}$, $\{\veps \log \veps\}$ are bounded in $L^{\infty}(0,T; L^1(\R))$. We observe that by Lemma \ref{lem:control_negative_parts_of_log} with $|y|^- = -\min(0,y)$
\begin{multline*}
\int_{\R} \ueps |\log \ueps| \diff x = \int_{\R} \ueps \log \ueps \diff x + 2 \int_{\R} \ueps |\log \ueps|^- \diff x \leq \\ \leq \int_{\R} \ueps \log \ueps \diff x + 2\left( \int_{\R} \ueps \, |x| \diff x + \frac{2}{e} \int_{\R} e^{-|x|} \diff x\right).
\end{multline*}
The same inequality holds for $\veps$. Therefore, the claim follows from \eqref{eq:entropy_bound_alpha_between_0_and_13} and \ref{est:moment}.\\

To prove \ref{est:ulogu_vlogv_moment}, we split the integral for three sets as follows:
\begin{multline*}
\int_{\R}  \ueps |\log \ueps| |x|^{\frac{1}{2}} \diff x \leq \int_{\ueps \geq 1}  \ueps |\log \ueps| |x|^{\frac{1}{2}} \diff x+  \int_{\ueps \in [e^{-|x|^{1/2}}, 1]}  \ueps |\log \ueps| |x|^{\frac{1}{2}} \diff x  \\+ \int_{\ueps \leq e^{-|x|^{1/2}}}  \ueps |\log \ueps| |x|^{\frac{1}{2}} \diff x =: I_1 + I_2 + I_3.
\end{multline*}
For $I_1$, we can estimate $|\log \ueps| \leq 1+ \ueps \leq 1+\|\ueps\|_{L^{\infty}}$ by \ref{est:conservation} and $|x|^{\frac{1}{2}} \leq 1+|x|$ so we can use \ref{est:moment} to control the integral. For $I_2$, the set of integration implies $|\log \ueps| \leq |x|^{\frac{1}{2}}$ so we can use again \ref{est:moment} to control the term. Finally, to bound $I_3$, we write $\ueps |\log \ueps| |x|^{\frac{1}{2}} = \sqrt{\ueps}\, \sqrt{\ueps} |\log \ueps|\,  |x|^{\frac{1}{2}}   \leq C\, |x|^{\frac{1}{2}}\, e^{-|x|^{\frac{1}{2}}/2}$ where $C = \sup_{x\in[0,1]} \sqrt{x}\, |\log x|$. The proof for $\veps$ is the same.\\

\underline{Step 4. Proof of \ref{est:crucial_estimate_gradient_power_alpha} and \ref{est:energy_derivatives_u_v_eps_signular}.} For \ref{est:crucial_estimate_gradient_power_alpha}, we observe that 
$$
\partial_x \seps^{\alpha} = \alpha\, \seps^{\alpha-1} \, \partial_x \seps = \alpha\, \seps^{\frac{\alpha}{2}} \, \seps^{\frac{\alpha}{2}-1} \, \partial_x \seps = 2\,\seps^{\frac{\alpha}{2}}\, \partial_x \seps^{\frac{\alpha}{2}}
$$ 
so we conclude directly by \ref{est:gradient_from_entropy} and \ref{est:conservation}. For \ref{est:energy_derivatives_u_v_eps_signular}, we have 
$$
|\partial_x \ueps| \leq \sqrt{\ueps}\, \frac{\partial_x \ueps}{\sqrt{\ueps}}\leq \|\seps\|_{L^{\infty}((0,T)\times\R)}^{\frac{1}{2}}\, \frac{\partial_x \ueps}{\sqrt{\ueps}}
$$
and we conclude by \ref{est:conservation} and \ref{est:entropy_derivatives_u_v_eps_signular}.

\underline{Step 5. Proof of \ref{est:partial_t_neg_ss}.} We have $\partial_t \ueps = \partial_x F_{\eps}$ where 
$$
F_{\eps} = \ueps \, \partial_x p(\seps) + \ueps\, \partial_x \mathcal{V}^1[\ueps,\veps]+ \eps\, \partial_x \ueps
$$ and it is sufficient to prove that $\{F_{\eps}\}$ is uniformly bounded in $L^2((0,T)\times\Omega)$. We have $|\ueps\, \partial_x p(\seps)|\leq  |\seps\, \partial_x p(\seps)| = \frac{1}{\alpha} |\partial_x \seps^{\alpha}|$ so the claim follows from \ref{est:crucial_estimate_gradient_power_alpha}, \ref{est:conservation}, \ref{est:energy_derivatives_u_v_eps_signular} and from Assumption \ref{ass:velocity_kernels_main} on the velocity fields. The reasoning for $\partial_t \veps$ is analogous.

\underline{Step 6. Proof of \ref{est:partial_x_seps_without_any_power}.} Since $\alpha \leq 2$ we use that $\partial_x \seps = \frac{2}{\alpha}\,\seps^{1-\frac{\alpha}{2}}\,\partial_x \seps^{\frac{\alpha}{2}}$ so $\{\partial_x \seps\}$ is bounded in $L^2((0,T)\times\R)$ by \ref{est:gradient_from_entropy} and \ref{est:conservation}.
\end{proof}
\subsection{Basic compactness}\label{subsect:basic_compactness} We extract here a subsequence which will enjoy several compactness properties. This is a direct consequence of Proposition \ref{prop:uniform_estimates_viscosity}.
\begin{proposition}\label{prop:basic_convergences}
There exists a subsequence (not relabelled) such that
\begin{enumerate}[label=(C\arabic*)]
\item\label{conv:sum_strong} $\seps \to s$ a.e. and strongly in $L^p((0,T)\times\R)$ for all $p\in [1,\infty)$,
\item\label{conv:u_v_weak} $\ueps \weak u$, $\veps \weak v$ weakly/weakly$^*$ in $L^p((0,T)\times\R)$ for all $p\in [1,\infty]$,
\item\label{conv:gradient_sum} $\partial_x \seps^{\alpha} \weak \partial_x s^{\alpha}$, $\partial_x \seps^{\frac{\alpha}{2}} \weak \partial_x s^{\frac{\alpha}{2}}$ weakly in $L^2((0,T)\times\R)$,
\item\label{conv:time_derivative} $\partial_t \seps \weak \partial_t s$ weakly in $L^2(0,T; H^{-1}(\R))$,
\item\label{conv:velocity_fields} $\partial_x \mathcal{V}^1[\ueps,\!\veps] \! \to\! \partial_x \mathcal{V}^1[u,\!v], \partial_x\mathcal{V}^2[\ueps,\!\veps]\! \to \! \partial_x \mathcal{V}^2[u,\!v]$ a.e. and strongly in $L^{p}_{\loc}((0,T)\!\times\!\R)$ for $p\in[1,\infty)$,
\item\label{conv:der_s_alpha_zero_in_vacuum} $ |\partial_x \seps^{\alpha} - \partial_x s^{\alpha}|\, \mathds{1}_{s=0} \to 0$ strongly in $L^1(0,T; L^1_{\loc}(\R))$ and $\partial_x s^{\alpha} = 0$ a.e. on $\{s = 0\}$,
\item\label{conv:integrated_time_der_sum} $\int_{-\infty}^x \partial_t \seps \diff y \weak \int_{-\infty}^x \partial_t s \diff y$ weakly in $L^2((0,T)\times\R)$,
\item\label{conv:square_product_uv} $\ueps^2 \weak \overline{u^2}$, $\ueps^3 \weak \overline{u^3}$, $\ueps^4 \weak \overline{u^4}$, $\veps^2 \weak \overline{v^2}$, $\ueps\,\veps \weak \overline{u\,v}$ weakly/weakly$^*$ in $L^{p}((0,T)\times\R)$ for $p\in[1,\infty]$ and for some functions $\overline{u^2}, \overline{u^3}, \overline{u^4}, \overline{v^2}, \overline{u\,v}$, 
\item\label{conv_logarithmic_fcns} $\ueps\log\ueps \weak \overline{u\log u}$,  $\frac{\ueps^2}{\seps}\log\ueps \weak \overline{\frac{u^2}{s}\log u}$, $\ueps^2\log\ueps \weak \overline{u^2\log u}$, $\veps\log \veps \weak \overline{v \log v}$, $\ueps \veps \log \veps \weak \overline{ u\,v \log v}$, $\frac{\ueps}{\seps} \veps \log \veps \weak \overline{ \frac{u}{s}\,v \log v}$  weakly/weakly$^*$ in $L^{p}((0,T)\times\R)$ for all $p\in[1,\infty]$ and for some functions $\overline{u\log u}$, $\overline{\frac{u^2}{s}\log u}$, $\overline{u^2\log u}$, $\overline{v \log v}$, $\overline{ u\,v \log v}$, $\overline{ \frac{u}{s}\,v \log v}$,
\item\label{conv_nabla_s_alpha_squared_in_measures} $|\partial_x \seps^{\alpha}|^2 \weak |\overline{\partial_x s^{\alpha}|^2}$ weakly in $\mathcal{M}_{\loc}([0,T]\times\R)$ for some $\overline{|\partial_x s^{\alpha}|^2} \in \mathcal{M}_{\loc}([0,T]\times\R)$,
\item\label{conv_product_u_derivative_s_alpha} $\ueps\, \partial_{x} \seps^{\alpha} \weak \overline{u\,\partial_x s^{\alpha}}$, $\ueps\, \partial_x p(\seps) \weak \overline{u\, \partial_x p(s)}$ weakly in $L^2((0,T)\times\R)$ for some functions $\overline{u\,\partial_x s^{\alpha}}, \overline{u\, \partial_x p(s)} \in L^2((0,T)\times\R)$ satisfying $\alpha\,s\,\overline{u\, \partial_x p(s)} = \overline{u\,\partial_x s^{\alpha}}$,
\item\label{conv:ulogu_vlogv_times_der} $\ueps \log \ueps \, \partial_x p(\seps) \weak \overline{u\log u \, \partial_x p(s)}$, $\veps \log \veps \, \partial_x p(\seps) \weak \overline{v\log v \, \partial_x p(s)}$, $\ueps \log \ueps \, \partial_x \seps^{\alpha} \weak \overline{u\log u \, \partial_x s^{\alpha}}$, $\veps \log \veps \, \partial_x \seps^{\alpha} \weak \overline{v\log v \, \partial_x s^{\alpha}}$ weakly in $L^2((0,T)\times\R)$ for some functions $\overline{u\log u \, \partial_x p(s)}$, $\overline{v\log v \, \partial_x p(s)}$, $\overline{u\log u \, \partial_x s^{\alpha}}$, $\overline{v\log v \, \partial_x s^{\alpha}}$ satisfying $\alpha\,s \, \overline{u\log u \, \partial_x p(s)}  = \overline{u\log u \, \partial_x s^{\alpha}}$ and $\alpha\,s \, \overline{v\log v \, \partial_x p(s)}  = \overline{v\log v \, \partial_x s^{\alpha}}$,
\item\label{conv:fractions} for all $k, l \in \N$, $l \leq k \leq 4$, we have $\frac{\ueps^k}{\seps^l} \weak \overline{\,\frac{u^k}{s^l}\,}$ weakly$^*$ in $L^{\infty}((0,T)\times\R)$ for some functions $\overline{\,\frac{u^k}{s^l}\,}$ satisfying ${s^{l-m}} \overline{\,\frac{u^k}{s^l}\,} = \overline{\,\frac{u^k}{s^m}\,} $ for all $m \in \N$, $0 \leq m \leq l$ (in particular, $s\, \overline{\,\frac{u}{s}\,} =  u$)
\item\label{conv_product_u_derivative_s_alpha_divided_by_s} $\frac{\ueps}{\seps}\, \partial_{x} \seps^{\alpha} \weak \overline{\frac{u}{s}\,\partial_x s^{\alpha}}, \frac{\veps}{\seps}\, \partial_{x} \seps^{\alpha} \weak \overline{\frac{v}{s}\,\partial_x s^{\alpha}}$, $\ueps \, \partial_{x} \seps^{\alpha} \weak \overline{u\,\partial_x s^{\alpha}}, {\frac{\ueps^2}{\seps^2}\, \partial_x \seps^{\alpha}} \weak \overline{{\frac{u^2}{s^2}\, \partial_x s^{\alpha}}}$ weakly in $L^2((0,T)\times\R)$ for some functions $\overline{\frac{u}{s}\,\partial_x s^{\alpha}}, \overline{\frac{v}{s}\,\partial_x s^{\alpha}}, \overline{u\,\partial_x s^{\alpha}}, \overline{{\frac{u^2}{s^2}\, \partial_x s^{\alpha}}} \in L^2((0,T)\times\R)$ satisfying $\overline{\frac{u}{s}\,\partial_x s^{\alpha}} + \overline{\frac{v}{s}\,\partial_x s^{\alpha}} = \partial_x s^{\alpha}$ and $s\,\overline{\frac{u}{s}\,\partial_x s^{\alpha}} =\overline{u\,\partial_x s^{\alpha}}$.
\end{enumerate}
\end{proposition}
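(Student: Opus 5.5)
Most items follow from the bounds of Proposition~\ref{prop:uniform_estimates_viscosity} together with weak compactness, a diagonal extraction of one subsequence for the finitely many listed families, and identification of limits through a.e.\ convergence of $\seps$. The only item that needs strong compactness is \ref{conv:sum_strong}, so we start there.

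\textbf{Step 1: strong convergence of $\seps$.} Sum the two equations, so that $\partial_t\seps$ is bounded in $L^2(0,T;H^{-1}(\R))$ by \ref{est:partial_t_neg_ss}. Spatial compactness comes from \ref{est:gradient_from_entropy}: $\partial_x\seps^{\alpha/2}$ is bounded in $L^2$, and $\seps$ is bounded in $L^\infty$ by \ref{est:conservation}.
\begin{itemize}
\item For $\alpha\le2$ we use \ref{est:partial_x_seps_without_any_power} directly.
\item For $\alpha>2$ we work with $w_\eps=\seps^{\alpha/2}$, which is bounded in $L^2(0,T;H^1)$. We then apply a nonlinear Aubin--Lions lemma (Dubinskii / Moussa type) to the monotone map $s\mapsto s^{\alpha/2}$, which gives compactness of $\seps$ in $L^1_{\loc}$.
\end{itemize}
Tightness comes from the first-moment bound \ref{est:moment}, which upgrades this to $L^1((0,T)\times\R)$. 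Interpolation with the $L^\infty$ bound then gives all $p<\infty$, and a further subsequence converges a.e.

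\textbf{Step 2: limits identified by a.e.\ convergence.} Items \ref{conv:u_v_weak}, \ref{conv:square_product_uv}, \ref{conv_logarithmic_fcns} and \ref{conv:fractions} follow from Banach--Alaoglu. The $L^1$ weak compactness is given by Dunford--Pettis: uniform integrability comes from the $L^\infty$ bounds, and tightness from \ref{est:moment} and \ref{est:ulogu_vlogv_moment}. The quotients are bounded because $\ueps\le\seps$.

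For the relation $s^{l-m}\overline{u^k/s^l}=\overline{u^k/s^m}$, write $\frac{\ueps^k}{\seps^m}=\seps^{l-m}\frac{\ueps^k}{\seps^l}$, which is a strongly convergent sequence times a weakly$^*$ convergent one.

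For \ref{conv:gradient_sum}, the weak limits of $\partial_x\seps^{\alpha}$ and $\partial_x\seps^{\alpha/2}$ are identified as $\partial_x s^\alpha$ and $\partial_x s^{\alpha/2}$ because $\seps^\alpha\to s^\alpha$ strongly in $L^1_{\loc}$. Items \ref{conv:time_derivative} and \ref{conv:integrated_time_der_sum} hold because $\int_{-\infty}^x\partial_t\seps\,\diff y$ equals minus the total flux, which is bounded in $L^2$ by Step 5 of the previous proof.

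Item \ref{conv:velocity_fields} holds because $\partial_xK^{i,j}\ast\ueps$ converges pointwise by weak convergence against the fixed $L^1\cap L^\infty$-type test function $\partial_xK(x-\cdot)$, localised using tightness. It is moreover equi-Lipschitz in $x$ by $\partial_x^2K\in L^\infty$, and equicontinuous in $t$ via \ref{est:partial_t_neg_ss}; a.e.\ convergence follows by dominated convergence. Item \ref{conv_nabla_s_alpha_squared_in_measures} follows from the $L^1$ bound by \ref{est:crucial_estimate_gradient_power_alpha}.

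\textbf{Step 3: product terms.} For \ref{conv_product_u_derivative_s_alpha}, \ref{conv:ulogu_vlogv_times_der} and \ref{conv_product_u_derivative_s_alpha_divided_by_s}, each product is an $L^\infty$ function times $\partial_x\seps^\alpha$, hence bounded in $L^2$. For the terms involving $\partial_xp(\seps)$ we use $|\ueps\partial_xp(\seps)|\le\frac1\alpha|\partial_x\seps^\alpha|$ and $|\ueps\log\ueps|\le C\ueps$ on bounded sets. The identity $\alpha\seps\,\ueps\partial_xp(\seps)=\ueps\partial_x\seps^\alpha$ passes to the limit as strong times weak.

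The sum identity follows from $\frac{\ueps}{\seps}+\frac{\veps}{\seps}=1$. This requires $\seps>0$, or at least $\partial_x\seps^\alpha=0$ where $\seps=0$; the latter holds for the smooth positive viscous solutions by the strong maximum principle.

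\textbf{Step 4: \ref{conv:der_s_alpha_zero_in_vacuum}, the main obstacle.} We plan to exploit $\partial_x\seps^\alpha=2\seps^{\alpha/2}\partial_x\seps^{\alpha/2}$. On a compact $K$, by Cauchy--Schwarz,
\[
\int_K|\partial_x\seps^\alpha|\,\mathds 1_{s=0}\le 2\,\|\seps^{\alpha/2}\mathds 1_{s=0}\|_{L^2(K)}\,\|\partial_x\seps^{\alpha/2}\|_{L^2}.
\]
The first factor tends to $0$ because $\seps\to s=0$ strongly there. Hence $\partial_x\seps^\alpha\mathds 1_{s=0}\to0$ in $L^1_{\loc}$. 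Since also $\partial_x\seps^\alpha\mathds 1_{s=0}\rightharpoonup\partial_x s^\alpha\mathds 1_{s=0}$ weakly, it follows that $\partial_xs^\alpha=0$ a.e.\ on $\{s=0\}$ (alternatively, this is the standard Sobolev fact). Both claims follow.
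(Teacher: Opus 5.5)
Your proposal is correct except for one step in Step 3. To bound $\ueps\log\ueps\,\partial_x p(\seps)$ (and its $\veps$ analogue) in $L^2$ for \ref{conv:ulogu_vlogv_times_der}, you use $|\ueps\log\ueps|\le C\ueps$ on bounded sets. This inequality is false as $\ueps\to0$, because $|\log\ueps|\to\infty$.

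For $\alpha\ge 2$ you do not need it. There $\partial_x p(\seps)=\frac{2}{\alpha}\seps^{\frac{\alpha}{2}-1}\partial_x\seps^{\frac{\alpha}{2}}$ is already bounded in $L^2$, and $\ueps\log\ueps$ is bounded.

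For $\alpha<2$, however, the factor $\seps^{\frac{\alpha}{2}-1}$ blows up near vacuum, so $\partial_x p(\seps)$ is not controlled in $L^2$. A merely bounded coefficient is then not enough: you must keep a power of $\ueps$ to absorb the singularity. The paper writes
\[
\ueps\log\ueps\,\partial_x p(\seps)=\frac{2}{\alpha}\,\ueps^{\frac{\alpha}{2}}\log\ueps\,\Big(\frac{\ueps}{\seps}\Big)^{1-\frac{\alpha}{2}}\,\partial_x\seps^{\frac{\alpha}{2}} .
\]
The prefactor is bounded because $x\mapsto x^{\frac{\alpha}{2}}\log x$ is continuous on $[0,\infty)$ and $\ueps\le\seps$, so \ref{est:gradient_from_entropy} gives the $L^2$ bound. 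Equivalently, you can use $\ueps|\log\ueps|\le C_\delta\,\ueps^{1-\delta}$ on bounded ranges with $\delta=\frac{\alpha}{2}$. With this repair the item holds, and the identity $\alpha\,s\,\overline{u\log u\,\partial_x p(s)}=\overline{u\log u\,\partial_x s^{\alpha}}$ follows as you say.

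The rest is correct, partly by a different route from the paper.

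For \ref{conv:sum_strong} you split at $\alpha=2$. You use classical Aubin--Lions via \ref{est:partial_x_seps_without_any_power} for $\alpha\le2$, and an external Dubinskii/Moussa-type nonlinear lemma for $\alpha>2$. The paper avoids both the case split and the external citation. It notes that $\seps^{1+\frac{\alpha}{2}}$ is bounded in $L^2(0,T;H^1(B_R))$. It also notes that $\partial_t\seps^{1+\frac{\alpha}{2}}=(1+\frac{\alpha}{2})\seps^{\frac{\alpha}{2}}\partial_t\seps$ is bounded in $(C(0,T;H^2_0(B_R)))^*$, by pairing $\partial_t\seps\in L^2(0,T;H^{-1})$ with $\seps^{\frac{\alpha}{2}}\varphi\in L^2(0,T;H^1)$. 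It then applies the measure-valued Aubin--Lions Lemma~\ref{lem:aubin-lions-measure}. This is in effect a hands-on proof of the Dubinskii-type statement you cite, and it works uniformly in $\alpha$.

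For \ref{conv:velocity_fields}, your Arzel\`a--Ascoli argument (equi-Lipschitz in $x$, H\"older-$\frac12$ in $t$ after a cutoff, tails controlled by the moment) is equivalent to the paper's Aubin--Lions argument on $\partial_x K\ast((\ueps-u)\chi_R)$. Note only that convergence at fixed $t$ comes from this equicontinuity, not from space--time weak convergence alone.

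For $\frac{\ueps}{\seps}+\frac{\veps}{\seps}=1$ you invoke positivity of the viscous solutions via the strong maximum principle, which the paper never establishes. The paper instead uses $\partial_x\seps^{\alpha}=2\seps^{\frac{\alpha}{2}}\partial_x\seps^{\frac{\alpha}{2}}$, which vanishes a.e.\ on $\{\seps=0\}$ since $\partial_x\seps^{\frac{\alpha}{2}}\in L^2$. This is the same factorization you already use in Step 4, so you can drop the appeal to positivity. Your Step 4 itself matches the paper's argument.
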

Concerning the overline notation, we use the it to denote weak limits which exist by the a~priori estimates but we do not know their representation. In particular, we do not know that $\overline{|\partial_x s^{\alpha}|^2} =|\partial_x s^{\alpha}|^2$ or $\overline{u\,\partial_x s^{\alpha}} = u\, \partial_x s^{\alpha}$. Before proceeding to the proof, we will state a~version of the Aubin-Lions Lemma taken from \cite[Corollary 7.9]{MR3014456} in the case that the time derivative is a measure.

\begin{lem}\label{lem:aubin-lions-measure}
Let $p \in (1,\infty)$ and let $V_1$, $V_2$, $V_3$ be Banach spaces such that $V_1$ is compactly embedded into $V_2$ while $V_2$ is continuously embedded into $V_3$. Moreover, suppose that $V_1$ is reflexive while $V_3$ has a predual space, i.e. $V_3 = (V_3')^*$. Then, if $\{\ueps\}$ is bounded in $L^p(0,T ; V_1)$ and $\{\partial_t \ueps\}$ is bounded in $C(0,T; V_3')^*$, then $\{\ueps\}$ is precompact in $L^p(0,T; V_2)$.
\end{lem}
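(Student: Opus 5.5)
The plan is to combine Ehrling's inequality with a time-translation (Simon-type) compactness criterion. The measure-valued time derivative will give two things at the level of $V_3$: uniform control of time translates in $L^1(0,T;V_3)$, and a uniform $L^\infty(0,T;V_3)$ bound, the latter being a vector-valued version of $BV\subset L^\infty$ in one dimension.

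\textbf{Step 1 (translates).} Fix $h\in(0,T)$ and $\phi\in V_3'$. We test $\partial_t \ueps$ with a continuous approximation of $\phi\,\mathds{1}_{[t,t+h]}(\cdot)$ and pass to the limit, using that the pairing is defined on $C(0,T;V_3')$. This should give $|\langle \ueps(t+h)-\ueps(t),\phi\rangle|\le \|\phi\|_{V_3'}\,|\partial_t\ueps|([t,t+h])$ for a.e. $t$, where $|\partial_t \ueps|$ is the total-variation measure on $[0,T]$ of the $V_3$-valued measure $\partial_t\ueps$. Since $V_3=(V_3')^*$, taking the supremum over $\|\phi\|_{V_3'}\le1$ yields $\|\ueps(t+h)-\ueps(t)\|_{V_3}\le|\partial_t\ueps|([t,t+h])$. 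Integrating in $t$ and applying Fubini gives
\[
\int_0^{T-h}\|\ueps(t+h)-\ueps(t)\|_{V_3}\diff t\le h\,\sup_\eps\|\partial_t\ueps\|_{C(0,T;V_3')^*}\le C\,h .
\]
The same pointwise estimate also gives $\|\ueps(t)\|_{V_3}\le \|\ueps(\tau)\|_{V_3}+C$ for a.e. $t,\tau$. Averaging in $\tau$ and using the bound in $L^p(0,T;V_1)\hookrightarrow L^1(0,T;V_3)$, we obtain that $\{\ueps\}$ is bounded in $L^\infty(0,T;V_3)$.

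\textbf{Step 2 (compactness in a weak norm).} Simon's criterion applies with $V_1\subset\subset V_2\hookrightarrow V_3$: boundedness in $L^1(0,T;V_1)$ together with $\|\tau_h \ueps-\ueps\|_{L^1(0,T-h;V_3)}\to0$ uniformly in $\eps$ yields precompactness of $\{\ueps\}$ in $L^1(0,T;V_2)$, hence in $L^1(0,T;V_3)$. Combining this with the $L^\infty(0,T;V_3)$ bound from Step 1 and interpolating ($\|f\|_{L^p}^p\le\|f\|_{L^\infty}^{p-1}\|f\|_{L^1}$), we get precompactness in $L^p(0,T;V_3)$. Reflexivity of $V_1$ is used here only to guarantee that limits of subsequences bounded in $L^p(0,T;V_1)$ remain in that space, which keeps the argument within the usual Aubin--Lions setting.

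\textbf{Step 3 (upgrade to $V_2$).} Ehrling's lemma, which relies on the compactness of $V_1\subset V_2$ and the continuity of $V_2\hookrightarrow V_3$, gives for each $\eta>0$ a constant $C_\eta$ with $\|w\|_{V_2}\le\eta\|w\|_{V_1}+C_\eta\|w\|_{V_3}$. We apply this to $w=u_{\eps_n}-u_{\eps_m}$ along a subsequence that is Cauchy in $L^p(0,T;V_3)$. This yields $\limsup\|u_{\eps_n}-u_{\eps_m}\|_{L^p(V_2)}\le 2\eta\sup_\eps\|\ueps\|_{L^p(V_1)}$, and since $\eta$ is arbitrary the subsequence is Cauchy in $L^p(0,T;V_2)$.

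I expect the main obstacle to be Step 1. It requires a rigorous justification of the pointwise-in-time estimate when $\partial_t\ueps$ is only an element of $C(0,T;V_3')^*$. One must approximate indicator functions of time intervals by continuous cut-offs and control the error through the total-variation measure at Lebesgue points, and one must also make sure that the supremum over the predual actually recovers the $V_3$-norm.
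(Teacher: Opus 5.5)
Your proof is correct. The paper does not prove this lemma at all; it simply imports it from Roubíček's monograph (\cite[Corollary 7.9]{MR3014456}), so there is no argument of the paper's to match yours against.

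You give a self-contained proof instead:
\begin{itemize}
\item The measure-valued time derivative gives the translation bound $\|\tau_h\ueps-\ueps\|_{L^1(0,T-h;V_3)}\le Ch$ and a uniform $L^\infty(0,T;V_3)$ bound.
\item Simon's criterion gives compactness in $L^1(0,T;V_2)$.
\item The $L^\infty(0,T;V_3)$ bound rules out concentration in time and upgrades this to $L^p(0,T;V_3)$.
\item Ehrling's inequality lifts it to $L^p(0,T;V_2)$.
\end{itemize}
Steps 2--3 could be merged. Interpolating the translation estimate itself, $\|\tau_h\ueps-\ueps\|_{L^p(0,T-h;V_3)}^p\le (2C)^{p-1}Ch$, lets you apply Simon's theorem directly in $L^p$.

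The citation is shorter and hides the vector-measure technicalities. Your route shows where each hypothesis enters, and in fact proves slightly more. Neither Simon's theorem nor Ehrling's lemma uses reflexivity of $V_1$, and for $p=1$ your Step 2 already concludes. Your sentence attributing a role to reflexivity is therefore not part of the argument and can be dropped.

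On the obstacle you anticipate in Step 1: the $V_3$-norm is by definition the supremum over the unit ball of the predual. The only real issue is that the exceptional null set depends on $\phi$. This is harmless, because $\ueps$ is essentially separably valued, so a countable subset of the unit ball of $V_3'$ already norms its range.

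The translation bound can even be obtained without pointwise-in-time statements or total-variation measures. For $\Phi\in C_c((0,T-h);V_3')$ extended by zero, $\Psi(\sigma)=\int_{\sigma-h}^{\sigma}\Phi(t)\,dt$ is an admissible test function. It satisfies $\sup\|\Psi\|\le h\sup\|\Phi\|$ and $\langle\partial_t\ueps,\Psi\rangle=\int_0^{T-h}\langle\ueps(t+h)-\ueps(t),\Phi(t)\rangle\,dt$. Taking the supremum over such $\Phi$ recovers the $L^1(0,T-h;V_3)$-norm. The pointwise estimate, with the countable norming set, is still the natural way to get the $L^\infty(0,T;V_3)$ bound.
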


\begin{proof}[Proof of Proposition \ref{prop:basic_convergences}] We divide the proof into several steps.\\

\underline{Proof of \ref{conv:sum_strong}.} We observe that $\partial_t \seps^{1+\frac{\alpha}{2}} = (1+\frac{\alpha}{2}) \seps^{\frac{\alpha}{2}}\, \partial_t \seps$. Let $\varphi \in C_c^{\infty}([0,T]\times B_R)$ with $B_R = (-R,R)$ and $R>0$ fixed. Then,
$$
\frac{1}{1+\frac{\alpha}{2}} \int_{0}^T \int_{\R} \partial_t \seps^{1+\frac{\alpha}{2}}\, \varphi \diff x \diff t \leq \|\partial_t \seps\|_{L^2(0,T; H^{-1}(\R))}\, \|  \seps^{\frac{\alpha}{2}}\|_{L^2(0,T; H^{1}(\R))}\, \|\varphi \|_{C(0,T; W^{1,\infty}_0(B_R))},
$$
so by the continuous embedding of $H^2_0(B_R)$ into $W^{1,\infty}_0(B_R)$ we deduce that $\{\partial_t \seps^{1+\frac{\alpha}{2}} \}$ is bounded in $(C(0,T; H^2_0(B_R)))^*$. Similarly, ${\partial_x \seps^{1+\frac{\alpha}{2}}}$ is bounded in $L^2(0,T; H^1(\Rd))$. We apply Lemma \ref{lem:aubin-lions-measure} with $V_1 = H^1(B_R)$, $V_2 = L^2(B_R)$, $V_3 = H^{-2}_0(B_R)$ and $p=2$ to obtain a subsequence such that $\seps^{1+\frac{\alpha}{2}} \to g$ a.e. on $[0,T]\times B_R$ for each $R>0$. By the diagonal method, we obtain a subsequence such that $\seps^{1+\frac{\alpha}{2}} \to g$ a.e. on $[0,T]\times \R$ so that $\seps \to g^{\frac{1}{1+\frac{\alpha}{2}}}$ and since the limits of a.e. convergence and weak convergence coincide we deduce $g^{\frac{1}{1+\frac{\alpha}{2}}} = s$. Having a.e. convergence, we obtain the convergence in $L^1(0,T;L^1_{\loc}(\R))$ by the dominated convergence which can be upgraded to the convergence in $L^1((0,T)\times\R)$ by the moment estimate \ref{est:moment}. Finally, the convergence in $L^p((0,T)\times\R)$ follows by interpolation.\\

\underline{Proof of \ref{conv:u_v_weak}, \ref{conv:gradient_sum}, \ref{conv:time_derivative}.}  Clearly, \ref{conv:u_v_weak} follows directly from the bound \ref{est:conservation} (the weak convergence for $p=1$ is a consequence of the Dunford-Pettis theorem, the tail estimate \ref{est:moment} and the uniform estimate for $p>1$). For \ref{conv:gradient_sum}, there is a subsequence converging weakly to some $f\in L^2((0,T)\times\R)$ and then we identify $f = \partial_x \seps^{\alpha}$ since for all smooth $\varphi(t,x)$ we have $\int_0^T \int_{\R} \partial_x \varphi \, \seps^{\alpha} \diff x \diff t \to \int_0^T \int_{\R} \partial_x \varphi \, s^{\alpha} \diff x \diff t$ by $\seps^{\alpha} \to s^{\alpha}$ a.e. in \ref{conv:sum_strong} and the dominated convergence ($\{\seps\}$ is uniformly bounded by \ref{est:conservation}). An analogous argument works for $\partial_x \seps^{\frac{\alpha}{2}}$. Finally, \ref{conv:time_derivative} is a direct consequence of the estimate \ref{est:partial_t_neg_ss}.\\

\underline{Proof of \ref{conv:velocity_fields}.} We will only prove the convergence of $\{\partial_x K^{1,1} \ast \ueps\}$ (for the other components one argues similarly). Let $\chi_{R}$ be a smooth function which is 1 on $[-R,R]$ and 0 on the set $\R\setminus [-2R,2R]$. For a compact set $\Omega \subset \R$ we estimate
\begin{multline*}
\|\partial_x K^{1,1} \ast (\ueps- u) \|_{L^1((0,T)\times\Omega)} \leq \\ \leq  \|\partial_x K^{1,1} \ast ((\ueps- u)\,\chi_R) \|_{L^1((0,T)\times\Omega)} + \|\partial_x K^{1,1} \ast ((\ueps- u)\, (1-\chi_R)) \|_{L^1((0,T)\times\Omega)}. 
\end{multline*}
The first term converges to 0 for each $R>0$ (indeed, by a standard version of the Aubin-Lions lemma, it has a subsequence converging to 0 so we deduce that the whole sequence converges to 0; in order to apply the Aubin-Lions lemma, the spatial regularity is obtained from $\partial_x K^{1,1}, \partial_x^2 K^{1,1} \in L^{\infty}(\R)$, the temporal regularity is guaranteed by \ref{est:partial_t_neg_ss} and the whole reasoning is restricted to a compact set thanks to the function $\chi_R$ and $\Omega$ being bounded). The second term is arbitrarily small by the assumption $\partial_x K^{1,1} \in L^{\infty}(\R)$, the moment estimate \ref{est:moment} and Young's convolutional inequality. As soon as the convergence in $L^1((0,T)\times\Omega)$ is established, we can extend it to $L^p((0,T)\times\Omega)$ with $1<p<\infty$ by interpolation while the a.e. convergence follows by extracting a further subsequence.\\

\underline{Proof of \ref{conv:der_s_alpha_zero_in_vacuum}.} We first note the identity $\partial_x \seps^{\alpha} = \frac{1}{2} \seps^{\frac{\alpha}{2}} \, \partial_x \seps^{\frac{\alpha}{2}}$ and we observe that we can pass to the weak limit in this identity for instance in $L^1(0,T; L^1_{\loc}(\R))$ by \ref{conv:gradient_sum} and $\seps^{\frac{\alpha}{2}} \to s^{\frac{\alpha}{2}}$ strongly in $L^2(0,T; L^2_{\loc}(\R))$ (an easy consequence of the dominated convergence and \ref{conv:sum_strong}, \ref{est:conservation}). We obtain $\partial_x s^{\alpha} = \frac{1}{2} s^{\frac{\alpha}{2}} \, \partial_x s^{\frac{\alpha}{2}}$ which implies  $\partial_x s^{\alpha}=0$ a.e. on the set $\{s=0\}$. To conclude, we need to prove $\int_0^T \int_{\Omega} |\partial_x \seps^{\alpha}|\, \mathds{1}_{s=0} \diff x \diff t \to 0$ which follows by 
$$
\int_0^T \int_{\Omega} |\partial_x \seps^{\alpha}|\, \mathds{1}_{s=0} \leq \frac{1}{2} \|\seps^{\frac{\alpha}{2}}\, \mathds{1}_{s=0}\|_{L^2((0,T)\times\Omega)} \, \| \partial_x \seps^{\frac{\alpha}{2}} \|_{L^2((0,T)\times\R)}
$$ 
and $\|\seps^{\frac{\alpha}{2}}\, \mathds{1}_{s=0}\|_{L^2((0,T)\times\Omega)} \to 0$ by the dominated convergence as above.\\

\underline{Proof of \ref{conv:integrated_time_der_sum}.} We observe that the sequence $\{\int_{-\infty}^x \partial_t \seps \diff y\}$ is bounded in $L^2((0,T)\times\R)$. Indeed, summing up the PDEs in \eqref{eq:general_cross_diffusion_intro_viscosity} and integrating in space we obtain
$$
 \int_{-\infty}^x \partial_t \seps(t,y)\diff y = \frac{1}{\alpha} \partial_x \seps^{\alpha} + \ueps\, \partial_x \mathcal{V}^1[\ueps,\veps] + \veps \, \partial_x \mathcal{V}^2[\ueps,\veps] + \eps\, \partial_x \seps
$$
(there is a small issue when $\seps = 0$ and this is discussed in detail in \eqref{eq:integrated_PDE_for_seps_very_first_step_proof} later). Therefore, it is sufficient to use the estimates \ref{est:crucial_estimate_gradient_power_alpha}, \ref{est:conservation}, \ref{est:energy_derivatives_u_v_eps_signular} and Assumption \ref{ass:velocity_kernels_main}. Hence, the subsequence extracted in \ref{conv:sum_strong} has a further subsequence converging weakly to some limit in $L^2((0,T)\times\R)$. Using the convergence \ref{conv:sum_strong}, the limit can be identified to be $\int_{-\infty}^x \partial_t s(t,y) \diff y$. \\

\underline{Proof of \ref{conv:square_product_uv}, \ref{conv_logarithmic_fcns}, \ref{conv_nabla_s_alpha_squared_in_measures}, \ref{conv_product_u_derivative_s_alpha}.} For \ref{conv:square_product_uv}, this is a direct consequence of the estimate \ref{est:conservation} and the Banach-Alaoglu theorem (for $p=1$ we also need a tail estimate to use the Dunford-Pettis theorem which follows for all of the sequences there because of the $L^{\infty}$ estimate in \ref{est:conservation} and the moment estimate \ref{est:moment}). Similarly, for \ref{conv_logarithmic_fcns}, we argue as for \ref{conv:square_product_uv} using the analogous estimates \ref{est:ulogu_vlogv} and \ref{est:ulogu_vlogv_moment} (the case $p=1$ needs an application of the Dunford-Pettis theorem again). The convergences \ref{conv_nabla_s_alpha_squared_in_measures} and \ref{conv_product_u_derivative_s_alpha} are direct consequences of the Banach-Alaoglu theorem and estimates \ref{est:crucial_estimate_gradient_power_alpha} and \ref{est:crucial_estimate_gradient_power_alpha}, \ref{est:conservation}, respectively. The identity $\alpha\,s\,\overline{u\, \partial_x p(s)} = \overline{u\,\partial_x s^{\alpha}}$ in \ref{conv_product_u_derivative_s_alpha} is a simple consequence of the identity $\alpha\, \seps\, \ueps \, \partial_x p(\seps) = \ueps\, \partial_x \seps^{\alpha}$ in which we can pass to the weak limit in $L^1_{\loc}((0,T)\times\R)$ by \ref{conv:sum_strong} (we recall that the weak limits in $L^2_{\loc}((0,T)\times\R)$ and $L^1_{\loc}((0,T)\times\R)$ coincide). \\

\underline{Proof of \ref{conv:ulogu_vlogv_times_der}.} The weak compactness in $L^2((0,T)\times\R)$ of $\{\ueps\log \ueps \, \partial_x \seps^{\alpha}\}$, $\{ \veps \log \veps\, \partial_x \seps^{\alpha}\}$ is clear by \ref{est:crucial_estimate_gradient_power_alpha} and \ref{est:ulogu_vlogv}. For $\{\ueps\log \ueps \, \partial_x p(\seps) \}$ we write
\begin{equation}\label{eq:boundedness_in_L2_ulogu_parx_pseps}
\ueps\log \ueps \, \partial_x p(\seps)  = \ueps \log \ueps \, \seps^{\alpha-2}\, \partial_x \seps = \ueps \log \ueps \, \seps^{\frac{\alpha}{2}-1}\, \frac{2}{\alpha} \partial_x \seps^{\frac{\alpha}{2}},
\end{equation}
so by \ref{est:gradient_from_entropy}, we only need to show that $\{\ueps \log \ueps \, \seps^{\frac{\alpha}{2}-1}\}$ is bounded in $L^{\infty}((0,T)\times\R)$. This is clear when $\alpha \geq 2$ by \ref{est:conservation} and \ref{est:ulogu_vlogv}. If $\alpha <2$, we write $\ueps \log \ueps \, \seps^{\frac{\alpha}{2}-1} = \ueps^{\frac{\alpha}{2}}\,  \log \ueps\, (\ueps/\seps)^{1-\frac{\alpha}{2}}$ and we use again \ref{est:conservation} together with the fact that $[0,\infty) \ni x \mapsto x^{\frac{\alpha}{2}} \log x $ is continuous. The identity $\alpha\,s \, \overline{u\log u \, \partial_x p(s)}  = \overline{u\log u \, \partial_x s^{\alpha}}$ follows by writing $\alpha\,\seps \, {\ueps \log \ueps \, \partial_x p(\seps)}  = {\ueps \log \ueps \, \partial_x \seps^{\alpha}}$ and passing to the weak limit in $L^1_{\loc}((0,T)\times\R)$ by \ref{conv:sum_strong}. The argument for $\{\veps \log \veps \, \partial_x p(\seps) \}$ is the same.\\

\underline{Proof of \ref{conv:fractions}.} The weak compactness is the direct consequence of \ref{est:conservation} and the inequality $\ueps \leq \seps$. The identity ${s^{l-m}} \overline{\,\frac{u^k}{s^l}\,} = \overline{\,\frac{u^k}{s^m}\,} $ follows by writing $\seps^{l-m} \, \frac{\ueps^k}{\seps^l} = \frac{\ueps^k}{\seps^m}$  the limit, let $f_{\eps} = \frac{\ueps^k}{\seps^l}$ and passing to the limit in this identity using \ref{conv:sum_strong} and the weak compactness proved in the current step.\\

\underline{Proof of \ref{conv_product_u_derivative_s_alpha_divided_by_s}.} The four convergences follow by the Banach-Alaoglu theorem and the bounds \ref{est:crucial_estimate_gradient_power_alpha} and \ref{est:conservation}. For the first identity, we write $\frac{\ueps}{\seps}\, \partial_{x} \seps^{\alpha} + \frac{\veps}{\seps}\, \partial_{x} \seps^{\alpha} = \partial_x \seps^{\alpha}$ (which is immediate when $\seps>0$ while when $\seps = 0$ we use that $\partial_x \seps^{\alpha} = 2\,\seps^{\frac{\alpha}{2}}\, \partial_x \seps^{\frac{\alpha}{2}} = 0$ since both derivatives are $L^2((0,T)\times\R)$ functions) and we pass to the limit in the identity using \ref{conv:gradient_sum}. Similarly, for the second identity we write $\seps\, \frac{\ueps}{\seps}\, \partial_{x} \seps^{\alpha} = \ueps\, \partial_{x} \seps^{\alpha}$ and we pass to the limit. Both sides converge in $L^1_{\loc}((0,T)\times\R)$ using \ref{conv:sum_strong} with $p=2$ so we deduce $s\,\overline{\frac{u}{s}\,\partial_x s^{\alpha}} =\overline{u\,\partial_x s^{\alpha}}$.
\end{proof}

\subsection{A conservation law applicable for all $\alpha \in (0,\infty)$}\label{subsect:cons_laws_all_alpha}

The main novel point in this section is the observation that there is a nonlinear function of $\ueps$ which satisfies a certain conservation law, suitable for application of the Div-Curl Lemma (Theorem~\ref{thm:div_curl_lemma_version_CR}).

\begin{proposition}\label{prop:new_cons_law_log_general_alpha}
Let $\ueps$, $\veps$ solve \eqref{eq:general_cross_diffusion_intro_viscosity}, $\seps = \ueps + \veps$ and $
E_\eps = \ueps\,\log\ueps + \veps \, \log \veps.$
Then,
\begin{equation}\label{eq:PDE_cons_law_entropy_eps}
\partial_t E_{\eps} = \partial_x \big((E_{\eps} + \seps)\,\partial_x p(\seps) + \ueps  \log \ueps\, \partial_x \mathcal{V}^1[\ueps, \veps] + \veps  \log \veps\, \partial_x \mathcal{V}^2[\ueps, \veps]\big) + g_{\eps} + h_{\eps},
\end{equation}
where $g_{\eps}$, $h_{\eps}$ are defined as follows:
\begin{align*}
g_{\eps} &= -\partial_x p(\seps)\,  \partial_x \seps +  \ueps\,\partial^2_x \mathcal{V}^1[\ueps, \veps] +  \veps\,\partial^2_x \mathcal{V}^2[\ueps, \veps] - \eps\, \frac{|\partial_x \ueps|^2}{\ueps} - \eps\, \frac{|\partial_x \veps|^2}{\veps},\\
h_{\eps} &= \partial_x(\eps\, \partial_x \ueps\, (1+\log\ueps)) + \partial_x(\eps\, \partial_x \veps\, (1+\log\veps)).
\end{align*}
Moreover, $h_{\eps} \to 0$ in $L^2(0,T; H^{-1}(\R))$ and $\{g_{\eps}\}_{\eps \in (0,1)}$ is bounded in $L^1((0,T)\times\R)$. 
\end{proposition}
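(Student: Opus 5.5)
The plan is to verify \eqref{eq:PDE_cons_law_entropy_eps} by a direct chain-rule computation, done separately for $\ueps$ and $\veps$ and then summed. Afterwards we bound $g_\eps$ and $h_\eps$ using Proposition \ref{prop:uniform_estimates_viscosity}. For smooth positive solutions (justified by the usual approximation $\log(\ueps+\delta)$, or by positivity of the viscous solutions for positive data), we have $\partial_t(\ueps\log\ueps) = (1+\log\ueps)\,\partial_t \ueps$. We insert the first equation of \eqref{eq:general_cross_diffusion_intro_viscosity} and write $W^1_\eps := \partial_x p(\seps) + \partial_x\mathcal{V}^1[\ueps,\veps]$, so that the equation reads $\partial_t\ueps = \partial_x(\ueps W^1_\eps) + \eps\partial_x^2\ueps$. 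Then
$$
(1+\log\ueps)\,\partial_x(\ueps W^1_\eps) = \partial_x\big((\ueps+\ueps\log\ueps)W^1_\eps\big) - \partial_x\ueps\, W^1_\eps .
$$
The last term splits into $-\partial_x p(\seps)\,\partial_x\ueps$ and $-\partial_x\ueps\,\partial_x\mathcal{V}^1$. For the latter we write $-\partial_x\ueps\,\partial_x\mathcal{V}^1 = -\partial_x(\ueps\,\partial_x\mathcal{V}^1) + \ueps\,\partial_x^2\mathcal{V}^1$. The term $-\partial_x(\ueps\partial_x \mathcal{V}^1)$ cancels the $\ueps\,\partial_x\mathcal{V}^1$ part of the flux, so the flux becomes $(\ueps\log\ueps+\ueps)\partial_x p(\seps) + \ueps\log\ueps\,\partial_x\mathcal{V}^1$. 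For the viscous term we use
$$
\eps(1+\log\ueps)\partial_x^2\ueps = \partial_x\big(\eps\,\partial_x\ueps(1+\log\ueps)\big) - \eps\,\frac{|\partial_x\ueps|^2}{\ueps}.
$$
Summing with the analogous computation for $\veps$, the two terms $-\partial_x p(\seps)\partial_x\ueps - \partial_x p(\seps)\partial_x\veps$ combine to $-\partial_x p(\seps)\,\partial_x\seps$. The fluxes combine to $(E_\eps+\seps)\partial_x p(\seps)$ plus the advection parts. This gives exactly \eqref{eq:PDE_cons_law_entropy_eps} with the stated $g_\eps$ and $h_\eps$.

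Next we bound $g_\eps$ in $L^1((0,T)\times\R)$. We have $\partial_x p(\seps)\partial_x\seps = \seps^{\alpha-2}|\partial_x\seps|^2 = \frac{4}{\alpha^2}|\partial_x\seps^{\alpha/2}|^2$, which is bounded in $L^1$ by \ref{est:gradient_from_entropy}. The terms $\ueps\partial_x^2\mathcal{V}^1$ and $\veps\partial_x^2\mathcal{V}^2$ are bounded by conservation of mass together with the $L^\infty$ bound on $\partial_x^2\mathcal{V}^i$, which comes from Assumption \ref{ass:velocity_kernels_main}. The Fisher-information terms satisfy $\eps|\partial_x\ueps|^2/\ueps = 4\eps|\partial_x\sqrt{\ueps}|^2$, which is bounded by \ref{est:entropy_derivatives_u_v_eps_signular}.

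For $h_\eps\to 0$ in $L^2(0,T;H^{-1}(\R))$, it suffices to show $\eps\,\partial_x\ueps(1+\log\ueps)\to0$ in $L^2((0,T)\times\R)$, and likewise for $\veps$. We write
$$
\eps\,\partial_x\ueps\,(1+\log\ueps) = \sqrt{\eps}\cdot\big(\sqrt{\eps}\,\partial_x\sqrt{\ueps}\big)\cdot 2\sqrt{\ueps}\,(1+\log\ueps).
$$
Since $\ueps$ is bounded in $L^\infty$ by \ref{est:conservation}, and $x\mapsto\sqrt{x}(1+\log x)$ is bounded on $[0,K]$, the last factor is bounded in $L^\infty$. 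The middle factor is bounded in $L^2$ by \ref{est:entropy_derivatives_u_v_eps_signular}. Hence the whole expression is $O(\sqrt{\eps})$ in $L^2$, and therefore tends to $0$.

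The main obstacle is the rigorous justification of the formal computation where $\ueps$ or $\veps$ vanishes, because $\log$ is singular there. The first approach to try is to perform the computation with $\ueps\log\ueps$ replaced by $(\ueps+\delta)\log(\ueps+\delta)$ and then let $\delta\to0$. This limit is controlled by the same bounds (E5), (E7) and dominated convergence. Alternatively, one can rely on the strong maximum principle for the uniformly parabolic viscous system with positive regularized data.
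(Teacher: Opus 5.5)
Your proposal is correct and follows the paper's proof essentially step for step. You use the same chain-rule computation after multiplying by $1+\log\ueps$ (resp.\ $1+\log\veps$), and the same bounds on $g_\eps$ via \ref{est:gradient_from_entropy}, Assumption~\ref{ass:velocity_kernels_main} and \ref{est:entropy_derivatives_u_v_eps_signular}. You also use the same factorization through $\sqrt{\ueps}\,(1+\log\ueps)\in L^\infty$ to get $h_\eps\to0$ in $L^2(0,T;H^{-1}(\R))$. Your remark on justifying the computation near $\{\ueps=0\}$ and your constant in $\partial_x p(\seps)\,\partial_x\seps=\frac{4}{\alpha^2}|\partial_x\seps^{\alpha/2}|^2$ are slightly more careful than the paper, which does the computation formally and omits that constant.
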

\begin{proof}
Multiplying the first equation by $1+\log \ueps$, we obtain by the product rule
\begin{align*}
\partial_t (\ueps\log\ueps) = & \, \partial_x(\ueps\,(1+\log\ueps)\,\partial_x p(\seps)) - \ueps\,\partial_x p(\seps)\, \frac{1}{\ueps}\, \partial_x \ueps \\
&+ (\partial_x \ueps \, \partial_x \mathcal{V}^1[\ueps, \veps] + \ueps \ \partial_x^2 \mathcal{V}^1[\ueps, \veps] )\, (1+\log\ueps)+ \eps\,\partial^2_x \ueps\, (1+\log\ueps).
\end{align*}
For the terms with the velocity $\mathcal{V}^1[\ueps, \veps]$ we have
\begin{align*}
&(\partial_x \ueps \, \partial_x \mathcal{V}^1[\ueps, \veps] + \ueps \, \partial_x^2 \mathcal{V}^1[\ueps, \veps] )\, (1+\log\ueps)  \\
&\qquad \qquad = \partial_x(\ueps \log \ueps)\,  \partial_x \mathcal{V}^1[\ueps, \veps] + \ueps\, (1+\log\ueps)\, \partial_x^2 \mathcal{V}^1[\ueps, \veps]\\
&\qquad \qquad = \partial_x(\ueps \log \ueps\,  \partial_x \mathcal{V}^1[\ueps, \veps]) + \ueps\, \partial_x^2 \mathcal{V}^1[\ueps, \veps],
\end{align*}
while for the term with viscosity we have
$$
\eps\,\partial^2_x \ueps\, (1+\log\ueps) = \partial_x(\eps\, \partial_x \ueps\, (1+\log\ueps))- \eps\, \frac{|\partial_x \ueps|^2}{\ueps}.
$$
Hence, we deduce
\begin{align*}
\partial_t (\ueps\log\ueps) =&\, \partial_x(\ueps\,(1+\log\ueps)\,\partial_x p(\seps) + \ueps \log \ueps \, \partial_x \mathcal{V}^1[\ueps, \veps]) -\partial_x p(\seps)\,  \partial_x \ueps \\
& + \ueps\,\partial^2_x \mathcal{V}^1[\ueps, \veps]  + \partial_x(\eps\, \partial_x \ueps\, (1+\log\ueps))- \eps\, \frac{|\partial_x \ueps|^2}{\ueps}.
\end{align*}
Analogously, we obtain for $\veps$:
\begin{align*}
\partial_t (\veps\log\veps) =&\, \partial_x(\veps\,(1+\log\veps)\,\partial_x p(\seps) + \veps \log \veps \, \partial_x \mathcal{V}^2[\ueps, \veps]) -\partial_x p(\seps)\,  \partial_x \veps \\
& + \veps\,\partial^2_x \mathcal{V}^2[\ueps, \veps]  + \partial_x(\eps\, \partial_x \veps\, (1+\log\veps))- \eps\, \frac{|\partial_x \veps|^2}{\veps}.
\end{align*}
Summing up equations for $\ueps\log\ueps$ and $\veps\log\veps$ we arrive at \eqref{eq:PDE_cons_law_entropy_eps}. The estimate on $\{g_{\eps}\}_{\eps \in (0,1)}$ follows directly from \ref{est:gradient_from_entropy} (since $\partial_x p(\seps)\, \partial_x \seps = \seps^{\alpha-2}\, |\partial_x \seps|^2 = |\partial_x \seps^{\frac{\alpha}{2}}|^2$), Assumption~\ref{ass:velocity_kernels_main} which implies that $\partial^2_x \mathcal{V}^1, \partial^2_x\mathcal{V}^2$ are bounded and \ref{est:entropy_derivatives_u_v_eps_signular}. To see the convergence $h_{\eps}\to0$, we write
\begin{equation}\label{eq:sequence_to_analyze_strong_conv_in_H_-1_entropic_variables}
\eps\, \partial_x \ueps\, (1+\log\ueps) = \eps\, \frac{\partial_x \ueps}{\sqrt{\ueps}}\, \sqrt{\ueps}\, (1+\log\ueps).
\end{equation}
Since $\{\sqrt{\eps} \frac{\partial_x \ueps}{\sqrt{\ueps}}\}_{\eps\in(0,1)}$ is bounded in $L^2((0,T)\times\R)$ by \ref{est:entropy_derivatives_u_v_eps_signular} and $\{\sqrt{\ueps}\, (1+\log\ueps)
\}_{\eps\in(0,1)}$ is bounded in $L^{\infty}((0,T)\times\R)$ by \ref{est:conservation}, the sequence in \eqref{eq:sequence_to_analyze_strong_conv_in_H_-1_entropic_variables} converges strongly to 0 in $L^2((0,T)\times\R)$. It follows that $\partial_x(\eps\, \partial_x \ueps\, (1+\log\ueps))\to 0$ strongly in $L^2(0,T; H^{-1}(\R))$ and since the same argument can be performed for $\partial_x(\veps\, \partial_x \veps\, (1+\log\veps))$, we conclude that $h_{\eps}\to0$ strongly in $L^2(0,T; H^{-1}(\R))$. 
\end{proof} 

\subsection{A conservation law applicable for $\alpha \in (0,2]$}\label{subsect:cons_law_u2_over_s}

Here, we provide another conservation law which is applicable only for $\alpha \in (0,2]$ (see Remark \ref{rem:integrability_of_derivative_sum_with_advection} for a discussion of the range). This conservation law can be used instead of \eqref{eq:PDE_cons_law_entropy_eps} to prove Proposition \ref{prop:the_case_of_support_on_the_line_identification_of_YM} (and consequently, Theorem \ref{thm:main}) slightly easier and we present this alternative argument in Section \ref{subsect:alternative_proof_alpha_at_most_3}. The quantity that turns out to be useful for us, $\frac{\ueps^2}{\seps}$, can be thought of as an integrand of the relative $L^2$ entropy $(1-\frac{\ueps}{\seps})^2\, \seps$ up to the terms whose limit can be identified. Note also that a similar relationship holds for $\frac{\ueps^{k+1}}{\seps^k}$ (see Remark \ref{rem:cons_law_uk+1_over_sk} below).
\begin{proposition}\label{prop:new_cons_law}
Let $\ueps$, $\veps$ solve \eqref{eq:general_cross_diffusion_intro_viscosity} and let $\seps = \ueps + \veps$. Then,
\begin{equation}\label{eq:PDE_for_usquared_over_s_main_statement}
\partial_t \left(\frac{\ueps^2}{\seps}\right) = \partial_x\left(\frac{1}{\alpha}\,\frac{\ueps^2}{\seps^2}\, \partial_x \seps^{\alpha} + f_{\eps} \right) + g_{\eps} +  h_{\eps},
\end{equation}
where $f_{\eps}$, $g_{\eps}$, $h_{\eps}$ are defined as follows:
\begin{align*}
f_{\eps} &= \frac{\ueps^2}{\seps}\,  \partial_x \mathcal{V}^1[\ueps, \veps] - \frac{\ueps^3}{3 \, \seps^2}\, \partial_x (\mathcal{V}^1[\ueps, \veps] - \mathcal{V}^2[\ueps, \veps]),\\
g_{\eps} &= \frac{\ueps^2}{\seps^2}\,\left(1-\frac{2\,\ueps}{3\,\seps} \right)\, \partial_x \big(\seps\,  \partial_x (\mathcal{V}^1[\ueps, \veps]-\mathcal{V}^2[\ueps, \veps])\big)-2\,\eps\,\seps\,\left(\frac{\partial_x \ueps \, \seps - \ueps \, \partial_x \seps}{\seps^2} \right)^2,\\
h_{\eps} &= 2\,\eps\, \partial_x\left(\frac{\ueps}{\seps} \, \partial_x \ueps\right)- \eps\, \partial_x\left(\frac{\ueps^2}{\seps^2}\, \partial_x \seps \right).
\end{align*}
Moreover, $h_{\eps} \to 0$ strongly in $L^2(0,T; H^{-1}(\R))$ and, if $\alpha\leq 2$, the sequence $\{g_{\eps}\}_{\eps\in(0,1)}$ is bounded in $L^{1}(0,T; L^1_{\loc}(\R))$ (see Remark~\ref{rem:integrability_of_derivative_sum_with_advection} below for a discussion of the condition $\alpha\leq2$).
\end{proposition}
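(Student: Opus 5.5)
The identity \eqref{eq:PDE_for_usquared_over_s_main_statement} can be obtained by a direct chain-rule computation in the variables $w_\eps := \ueps/\seps$ and $\seps$. Write the equations \eqref{eq:general_cross_diffusion_intro_viscosity} as $\partial_t \ueps = \partial_x(\ueps\, a_\eps + \ueps\, b^1_\eps) + \eps\,\partial_x^2\ueps$ and $\partial_t \seps = \partial_x(\seps\, a_\eps + \ueps\, b^1_\eps + \veps\, b^2_\eps) + \eps\,\partial_x^2\seps$, where $a_\eps = \partial_x p(\seps)$ and $b^i_\eps = \partial_x \mathcal{V}^i[\ueps,\veps]$. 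Write also $\delta_\eps := b^1_\eps - b^2_\eps$.

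For smooth positive solutions (the general case follows by regularising $\seps \mapsto \seps + \eta$ or by positivity of the parabolic approximation), use $\partial_t (\ueps^2/\seps) = 2 w_\eps\, \partial_t \ueps - w_\eps^2\, \partial_t \seps$. Three groups of terms appear.

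\textbf{Pressure terms.} They give $2w_\eps\partial_x(\ueps a_\eps) - w_\eps^2\partial_x(\seps a_\eps)$. Adding and subtracting, this equals $\partial_x(w_\eps^2 \seps a_\eps) + 2\ueps a_\eps\,\partial_x w_\eps - 2 w_\eps\seps a_\eps\, \partial_x w_\eps = \partial_x(w_\eps^2 \seps a_\eps)$. Since $\seps a_\eps = \frac1\alpha \partial_x\seps^\alpha$, this is exactly the flux $\frac1\alpha \frac{\ueps^2}{\seps^2}\partial_x\seps^\alpha$. This cancellation is the reason the pressure contributes no source term.

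\textbf{Advection terms.} Write $\ueps b^1_\eps + \veps b^2_\eps = \seps b^1_\eps - \veps\delta_\eps$. The same manipulation gives $\partial_x(\frac{\ueps^2}{\seps} b^1_\eps) + w_\eps^2\partial_x(\veps\delta_\eps)$. Then rewrite $w_\eps^2 \partial_x((1-w_\eps)\seps\delta_\eps)$ via $w^2(1-w)\,\partial_x(\seps\delta) - w^2\seps\delta\,\partial_x w$. Using $w^2\partial_x w = \partial_x(w^3/3)$, move $-\frac13\partial_x(w^3\seps\delta) + \frac13 w^3\partial_x(\seps\delta)$ around. Collecting terms should yield the flux $-\frac{\ueps^3}{3\seps^2}\delta_\eps$ and the source $w^2(1-\frac{2}{3}w)\,\partial_x(\seps\delta_\eps)$. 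I will check the coefficients carefully; this bookkeeping is routine.

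\textbf{Viscous terms.} They give $\eps(2w_\eps\partial_x^2\ueps - w_\eps^2\partial_x^2\seps)$. Integrating by parts, this is $h_\eps - 2\eps\,\partial_x w_\eps\partial_x\ueps + 2\eps w_\eps\partial_x w_\eps\partial_x\seps = h_\eps - 2\eps\,\partial_x w_\eps(\partial_x\ueps - w_\eps\partial_x\seps)$. Since $\partial_x\ueps - w_\eps\partial_x \seps = \seps\,\partial_x w_\eps$, this equals $h_\eps - 2\eps\seps|\partial_x w_\eps|^2$, which is the dissipative part of $g_\eps$.

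\textbf{Convergence of $h_\eps$.} Since $0\le w_\eps\le 1$, it suffices that $\eps\, w_\eps\partial_x\ueps$ and $\eps\, w_\eps^2\partial_x\seps$ tend to $0$ in $L^2((0,T)\times\R)$. Both are bounded by $\sqrt\eps\cdot\sqrt\eps\,|\partial_x \ueps|$ and $\sqrt\eps\cdot\sqrt\eps\,|\partial_x \seps|$, so \ref{est:energy_derivatives_u_v_eps_signular} gives the claim.

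\textbf{Bound on $g_\eps$.} This is the main obstacle. The dissipative term $2\eps\seps|\partial_x w_\eps|^2$ is nonnegative. It can be controlled in $L^1$ either by integrating the identity itself, using $\frac{\ueps^2}{\seps}\le \seps$ together with the $L^1$ bounds once the advective source is handled, or directly: $\seps|\partial_x w_\eps|^2 \le 2\frac{|\partial_x\ueps|^2}{\ueps} + 2\frac{|\partial_x\veps|^2}{\veps}$, by writing $\partial_x w = (\veps\partial_x\ueps - \ueps\partial_x\veps)/\seps^2$ and using Cauchy--Schwarz. Then \ref{est:entropy_derivatives_u_v_eps_signular} applies.

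For the advective source, $|w^2(1-\frac23 w)|\le 1$ and $\partial_x(\seps\delta_\eps) = \partial_x\seps\,\delta_\eps + \seps\,\partial_x\delta_\eps$. The second term is bounded in $L^\infty$ by Assumption \ref{ass:velocity_kernels_main} and \ref{est:conservation}, and it is in $L^1$ as well. The first term requires $\partial_x\seps\in L^1(0,T;L^1_{\loc}(\R))$. This is exactly where $\alpha\le2$ enters: by \ref{est:partial_x_seps_without_any_power}, $\partial_x\seps$ is bounded in $L^2((0,T)\times\R)$, hence in $L^1(0,T;L^1_{\loc})$. For $\alpha>2$ only $\partial_x\seps^{\alpha/2}$ is controlled, and this bound degenerates near vacuum. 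This explains the restriction in the statement.
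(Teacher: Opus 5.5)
Your proposal is correct and follows essentially the same route as the paper: you do the same direct computation of $\partial_t(\ueps^2/\seps)$, split into pressure, advection and viscous contributions, and you use the same estimates, namely \ref{est:energy_derivatives_u_v_eps_signular} for $h_\eps\to0$ and \ref{est:conservation}, \ref{est:partial_x_seps_without_any_power}, \ref{est:entropy_derivatives_u_v_eps_signular} for the bound on $g_\eps$. The only differences are cosmetic: you work in the variable $w_\eps=\ueps/\seps$, and you bound the dissipative term via $\veps\,\partial_x\ueps-\ueps\,\partial_x\veps$ rather than via $\partial_x\seps$ as the paper does.
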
 

\begin{proof}[Proof of Proposition \ref{prop:new_cons_law}] We compute using \eqref{eq:general_cross_diffusion_intro_viscosity}:
\begin{equation}\label{eq:derivation_new_cons_law_step0_time_der}
\begin{split}
&\partial_t \left(\frac{\ueps^2}{\seps}\right) = \frac{2\,\ueps\, \partial_t \ueps \, \seps - \ueps^2\, \partial_t \seps }{\seps^2}\\
&=\,\frac{2\,\ueps \, \seps}{\seps^2} \, \left[\partial_x \ueps \, \partial_x p(\seps) + \ueps\, \partial_{x}^2 p(\seps) + \partial_x(\ueps \, \partial_x \mathcal{V}^1[\ueps, \veps]) + \eps\, \partial_{x}^2 \ueps \right]\\
&\phantom{=}\,-\frac{\ueps^2}{\seps^2}\left[\partial_x \seps\, \partial_x p(\seps) + \seps\, \partial_x^2 p(\seps) + \partial_x(\ueps \, \partial_x \mathcal{V}^1[\ueps, \veps]) + \partial_x(\veps \, \partial_x \mathcal{V}^2[\ueps, \veps]) + \eps\,\partial_x^2 \seps \right].
\end{split}
\end{equation}
The terms with $p(\seps)$ can be nicely written in the divergence form
\begin{equation}\label{eq:derivation_new_cons_law_step1_terms_with_pse}
\begin{split}
\frac{2\,\ueps \, \seps}{\seps^2} \, &\left[\partial_x \ueps \, \partial_x p(\seps) + \ueps\, \partial_{x}^2 p(\seps)\right] -\frac{\ueps^2}{\seps^2}\,\left[\partial_x \seps\, \partial_x p(\seps) + \seps\, \partial_x^2 p(\seps)\right]\\
&= \frac{\seps}{\seps^2}\, \partial_x \ueps^2 \, \partial_x p(\seps) + \frac{\seps}{\seps^2}\, \ueps^2 \, \partial_{x}^2 p(\seps) -\frac{\ueps^2}{\seps^2}\,\partial_x \seps\, \partial_x p(\seps)\\
&= \partial_x\left( \frac{\ueps^2}{\seps} \right)\, \partial_x p(\seps) + \frac{ \ueps^2}{\seps}\, \partial_{x}^2 p(\seps) = \partial_x\left(\frac{\ueps^2}{\seps}\, \partial_{x} p(\seps)\right) =  \frac{1}{\alpha}\,\partial_x\left(\frac{\ueps^2}{\seps^2}\, \partial_x \seps^{\alpha}\right),
\end{split}
\end{equation}
where in the last line we used computation \eqref{eq:pressure_term_rewritten_derivative_intro}. Now, we consider the terms with the velocity fields $\mathcal{V}^1[\ueps, \veps]$, $\mathcal{V}^2[\ueps, \veps]$. First, we have
\begin{align*}
2\, \frac{\ueps}{\seps} \, \partial_x(\ueps \, \partial_x \mathcal{V}^1[\ueps, \veps])  
&= 
2\, \frac{\ueps}{\seps} \, \partial_x \ueps\, \partial_x \mathcal{V}^1[\ueps, \veps] + 2\, \frac{\ueps^2}{\seps} \, \partial^2_x \mathcal{V}^1[\ueps, \veps] \\
&= 
\frac{\partial_x \ueps^2}{\seps}\,  \partial_x \mathcal{V}^1[\ueps, \veps] +2\, \frac{\ueps^2}{\seps} \, \partial^2_x \mathcal{V}^1[\ueps, \veps]\\
&= \partial_x\left(\frac{\ueps^2}{\seps}\,  \partial_x \mathcal{V}^1[\ueps, \veps]\right) + \frac{\ueps^2}{\seps^2}\, \partial_x \seps\,  \partial_x \mathcal{V}^1[\ueps, \veps] + \frac{\ueps^2}{\seps}\,  \partial^2_x \mathcal{V}^1[\ueps, \veps]\\
&=  \partial_x\left(\frac{\ueps^2}{\seps}\,  \partial_x \mathcal{V}^1[\ueps, \veps]\right) + \frac{\ueps^2}{\seps^2}\, \partial_x \big(\seps\,  \partial_x \mathcal{V}^1[\ueps, \veps] \big).
\end{align*}
For the term $- \frac{\ueps^2}{\seps^2}\,\partial_x(\veps \, \partial_x \mathcal{V}^2[\ueps, \veps])$ we first write
\begin{equation}\label{eq:term_vf_V2_split_v_for_s_u_to_combine_with_2nd_term}
- \frac{\ueps^2}{\seps^2}\,\partial_x(\veps \, \partial_x \mathcal{V}^2[\ueps, \veps]) = - \frac{\ueps^2}{\seps^2}\,\partial_x(\seps \, \partial_x \mathcal{V}^2[\ueps, \veps]) + \frac{\ueps^2}{\seps^2}\,\partial_x(\ueps \, \partial_x \mathcal{V}^2[\ueps, \veps]),
\end{equation}
so that the second term in \eqref{eq:term_vf_V2_split_v_for_s_u_to_combine_with_2nd_term} can be combined with $
 - \frac{\ueps^2}{\seps^2}\, \partial_x(\ueps \, \partial_x \mathcal{V}^1[\ueps, \veps])$ which yields
\begin{align*}
 &- \frac{\ueps^2}{\seps^2}\, \partial_x(\ueps \, \partial_x (\mathcal{V}^1[\ueps, \veps] - \mathcal{V}^2[\ueps, \veps] ) )  \\
&=  - \frac{\partial_x \ueps^3}{3 \, \seps^2}\, \partial_x (\mathcal{V}^1[\ueps, \veps] - \mathcal{V}^2[\ueps, \veps])
  - \frac{\ueps^3}{\seps^2}\,  \partial^2_x (\mathcal{V}^1[\ueps, \veps] - \mathcal{V}^2[\ueps, \veps])\\
&=  - \partial_x\left(\frac{\ueps^3}{3 \, \seps^2}\, \partial_x (\mathcal{V}^1[\ueps, \veps] - \mathcal{V}^2[\ueps, \veps])\right) -\frac{2}{3}\, \frac{\ueps^3}{\seps^3} \, \partial_x \seps \, \partial_x(\mathcal{V}^1[\ueps, \veps] - \mathcal{V}^2[\ueps, \veps])\\
&\,\phantom{ = } - \frac{2}{3}\,\frac{\ueps^3}{\seps^2}\,  \partial^2_x (\mathcal{V}^1[\ueps, \veps] - \mathcal{V}^2[\ueps, \veps])\\
&=  - \partial_x\left(\frac{\ueps^3}{3 \, \seps^2}\, \partial_x (\mathcal{V}^1[\ueps, \veps] - \mathcal{V}^2[\ueps, \veps])\right) - \frac{2}{3}\,\frac{\ueps^3}{\seps^3}\, \partial_x \big(\seps\,  \partial_x (\mathcal{V}^1[\ueps, \veps] - \mathcal{V}^2[\ueps, \veps])\big).
\end{align*} 
Hence, the terms with the velocity fields $\mathcal{V}^1[\ueps, \veps]$, $\mathcal{V}^2[\ueps, \veps]$ sum up to
\begin{equation}\label{eq:derivation_new_cons_law_step2_adv_terms}
\begin{split}
2\, \frac{\ueps}{\seps} \, \partial_x(&\ueps \, \partial_x \mathcal{V}^1[\ueps, \veps]) - \frac{\ueps^2}{\seps^2}\, \partial_x(\ueps \, \partial_x \mathcal{V}^1[\ueps, \veps]) - \frac{\ueps^2}{\seps^2}\,\partial_x(\veps \, \partial_x \mathcal{V}^2[\ueps, \veps]) \\ 
=\, &\partial_x\left(\frac{\ueps^2}{\seps}\,  \partial_x \mathcal{V}^1[\ueps, \veps]\right) - \partial_x\left(\frac{\ueps^3}{3 \, \seps^2}\, \partial_x (\mathcal{V}^1[\ueps, \veps] - \mathcal{V}^2[\ueps, \veps])\right)\\
&+\frac{\ueps^2}{\seps^2}\,\left(1-\frac{2\,\ueps}{3\,\seps} \right)\, \partial_x \big(\seps\,  \partial_x (\mathcal{V}^1[\ueps, \veps]-\mathcal{V}^2[\ueps, \veps])\big).
\end{split}
\end{equation}
Regarding the terms with $\eps\, \partial^2_x \ueps$ and $\eps\, \partial^2_x \seps$ we can write
\begin{align*}
2\,\eps \, \frac{\ueps}{\seps} \, \partial^2_x \ueps &= 
2\, \eps\, \partial_x\left(\frac{\ueps}{\seps} \, \partial_x \ueps\right) - 2\, \eps\, \partial_x \ueps \, \frac{\seps \, \partial_x \ueps  - \ueps\, \partial_x \seps}{\seps^2} \\
&= 2\, \eps\, \partial_x\left(\frac{\ueps}{\seps} \, \partial_x \ueps\right) - 2\, \eps\, \frac{|\partial_x \ueps|^2}{\seps} + 2\, \eps\, \frac{\ueps\,\partial_x \ueps\, \partial_x \seps}{\seps^2},
\end{align*}
\begin{align*}
-\eps\, \frac{\ueps^2}{\seps^2}\, \partial^2_x \seps =&  
-\eps\, \partial_x\left(\frac{\ueps^2}{\seps^2}\, \partial_x \seps \right) + \eps\, \partial_x \seps\, \frac{2\,\ueps\, \partial_x \ueps \, \seps^2 - 2\, \seps\, \partial_x \seps \, \ueps^2}{\seps^4}\\
=&-\eps\, \partial_x\left(\frac{\ueps^2}{\seps^2}\, \partial_x \seps \right) + 2\,\eps\, \frac{\ueps\, \partial_x \ueps\, \partial_x \seps}{\seps^2} - 2\,\eps\, \frac{\ueps^2\, |\partial_x\seps|^2}{\seps^3}.
\end{align*}
We observe that in the two expressions above, the terms not in the divergence form can be summed up to form $-2\,\eps\,\seps\,\left(\frac{\partial_x \ueps \, \seps - \ueps \, \partial_x \seps}{\seps^2} \right)^2$ so we get
\begin{equation}\label{eq:derivation_new_cons_law_step3_eps_terms}
\begin{split}
2\,\eps \, \frac{\ueps}{\seps} \, \partial^2_x \ueps -&\eps\, \frac{\ueps^2}{\seps^2}\, \partial^2_x \seps \\ &= 2\, \eps\, \partial_x\left(\frac{\ueps}{\seps} \, \partial_x \ueps\right)-\eps\, \partial_x\left(\frac{\ueps^2}{\seps^2}\, \partial_x \seps \right) -2\,\eps\,\seps\,\left(\frac{\partial_x \ueps \, \seps - \ueps \, \partial_x \seps}{\seps^2} \right)^2 .
\end{split}
\end{equation} 
Plugging \eqref{eq:derivation_new_cons_law_step1_terms_with_pse}, \eqref{eq:derivation_new_cons_law_step2_adv_terms} and \eqref{eq:derivation_new_cons_law_step3_eps_terms} into \eqref{eq:derivation_new_cons_law_step0_time_der}, we obtain \eqref{eq:PDE_for_usquared_over_s_main_statement}.\\

Now, $h_{\eps}\to 0$ in $L^2(0,T; H^{-1}(\R))$ because $\frac{\ueps}{\seps}\leq 1$ and $\eps \, \partial_x \ueps, \eps \, \partial_x \seps \to 0$ strongly in $L^2((0,T)\times\R)$ by \ref{est:energy_derivatives_u_v_eps_signular}. Regarding the bound on $\{g_{\eps}\}_{\eps\in(0,1)}$, the first term is controlled because $\seps, \partial_x \seps$ are at least in $L^{1}(0,T; L^1_{\loc}(\R))$ by \ref{est:conservation}, \ref{est:partial_x_seps_without_any_power} and $\partial_x \mathcal{V}^i[\ueps,\veps]$, $\partial^2_x \mathcal{V}^i[\ueps,\veps]$ are bounded by Assumption \ref{ass:velocity_kernels_main}. For the second term in $g_{\eps}$, we estimate
$$
\eps\,\seps\,\left(\frac{\partial_x \ueps \, \seps - \ueps \, \partial_x \seps}{\seps^2} \right)^2 \leq 2\, \eps\, \frac{|\partial_x \ueps|^2}{\seps} + 2\, \eps\, \frac{|\partial_x \seps|^2}{\seps} \leq 2\, \eps\, \frac{|\partial_x \ueps|^2}{\ueps} + 2\, \eps\, \frac{|\partial_x \seps|^2}{\seps}
$$
and both terms are bounded in $L^1((0,T)\times\R)$ by \ref{est:entropy_derivatives_u_v_eps_signular}.
\end{proof}

\begin{rem}\label{rem:integrability_of_derivative_sum_with_advection} We remark that the restriction $\alpha\leq2$ is only needed to ensure that $\{g_{\eps}\}_{\eps \in (0,1)}$ is bounded in $L^{1}_{\loc}((0,T)\times\R)$. This is achieved by proving that $\{\partial_x \seps\}_{\eps\in(0,1)}$ is bounded in $L^{1}(0,T; L^1_{\loc}(\R))$. If such a bound could be established for $\alpha > 2$, the conclusion of Proposition~\ref{prop:new_cons_law} would also hold in this case. Furthermore, the proof of Proposition~\ref{prop:the_case_of_support_on_the_line_identification_of_YM} presented in Section~\ref{subsect:alternative_proof_alpha_at_most_3} would also be valid, yielding an alternative (and simpler) proof of Theorem~\ref{thm:main}. It is not clear whether the range $\alpha \leq 2$ is optimal, and this question appears to be delicate even for the pure porous medium equation $\partial_t s = \partial_x^2 s^{\alpha}$ on $\mathbb{R}$ without advection (see, e.g., \cite{MR2342955, MR4195738}). A naive approach to prove that $\partial_x s \in L^1_{\loc}((0,T)\times\mathbb{R})$ in this case is to use the identity
\begin{equation}\label{eq:multiplying_negative_powers_s_to_control_gradient_s_l1}
-{\alpha\, \kappa} \int_0^t \int_{\R} s^{\alpha+\kappa - 2}\, |\partial_x s|^2 \diff x \diff \tau = \frac{1}{1+\kappa} \int_{\R} (s(t,x)^{1+\kappa} - s(0,x)^{1+\kappa}) \diff x,
\end{equation}
where $\kappa \in (-1,0]$. By splitting the spatial integral into the regions $|x|\leq1$ and $|x|~>~1$, the terms on the right-hand side can be controlled using the mass $\int_{\mathbb{R}} s(t,x) \diff x$ and a sufficiently high moment $\int_{\mathbb{R}} s(t,x)\,|x|^{\sigma} \diff x$, since $1+\kappa \in (0,1]$. Here we use that, for the porous medium equation, all moments are controlled provided the initial data is bounded and has finite moments of all orders. We conclude that $\partial_x s\in L^2((0,T)\times\R)$ provided $\alpha + \kappa - 2 < 0$, which can be achieved when $\alpha < 3$. This does not seem to be optimal, since for the Barenblatt's solution $B(t,x) = t^{-1/(\alpha+1)}\, F(x\,t^{-1/(\alpha+1)})$ with $F(y) = (C-y^2)_+^{1/(\alpha-1)}$, a~direct computation shows that $B(t,x)\in L^1((\tau,T)\times\R)$ for all $\tau>0$. Regarding our problem~\eqref{eq:general_cross_diffusion_intro_viscosity}, the argument exploiting \eqref{eq:multiplying_negative_powers_s_to_control_gradient_s_l1} cannot be adapted, since due to the advection we obtain an additional term on the (RHS) of \eqref{eq:multiplying_negative_powers_s_to_control_gradient_s_l1}, which is roughly $\int_0^t \int_{\R} \seps^{\kappa}\,|\partial_x \seps| \diff x \diff \tau$. By writing $\seps^{\kappa}\,|\partial_x \seps| = \seps^{\frac{\kappa+\alpha-2}{2}}\,|\partial_x \seps|\, \seps^{\frac{\kappa-\alpha+2}{2}}$, we see that this term can be controlled by $\int_0^t \int_{\R} \seps^{\alpha+\kappa - 2}\, |\partial_x \seps|^2 \diff x \diff \tau$ only if $\kappa - \alpha + 2 \geq 0$, i.e. $\alpha \leq 2$.
\end{rem}

\begin{rem}\label{rem:cons_law_uk+1_over_sk}
The same computation as in the proof of Proposition~\ref{prop:new_cons_law} shows that for any $\theta>0$ we have
\begin{equation*}
\partial_t \left(\frac{\ueps^{\theta+1}}{\seps^{\theta}}\right) = \partial_x\left(\frac{1}{\alpha}\,\frac{\ueps^{\theta+1}}{\seps^{\theta+1}}\, \partial_x \seps^{\alpha} + f_{\eps,\theta} \right) + g_{\eps,\theta} +  h_{\eps,\theta},
\end{equation*}
where $f_{\eps,\theta}$, $g_{\eps,\theta}$, $h_{\eps,\theta}$ are defined as follows:
\begin{align*}
f_{\eps,\theta} =\,& \frac{\ueps^{\theta+1}}{\seps^{\theta}}\,  \partial_x \mathcal{V}^1[\ueps, \veps] - \frac{\theta}{\theta+2}\,\frac{\ueps^{\theta+2}}{\seps^{\theta+1}}\, \partial_x (\mathcal{V}^1[\ueps, \veps] - \mathcal{V}^2[\ueps, \veps]),\\
g_{\eps,\theta} =\,& \theta\,\frac{\ueps^{\theta+1}}{\seps^{\theta+1}}\,\left(1-\frac{(\theta+1)}{(\theta+2)}\,\frac{\ueps}{\seps} \right)\, \partial_x \big(\seps\,  \partial_x (\mathcal{V}^1[\ueps, \veps]-\mathcal{V}^2[\ueps, \veps])\big)\\
&-\theta\,(\theta+1)\,\eps\,\seps\, \frac{\ueps^{\theta-1}}{\seps^{\theta-1}}\,\left(\frac{\partial_x \ueps \, \seps - \ueps \, \partial_x \seps}{\seps^2} \right)^2,\\
h_{\eps,\theta} =\,& (\theta+1)\,\eps\, \partial_x\left(\frac{\ueps^{\theta}}{\seps^{\theta}} \, \partial_x \ueps\right)-  \theta\, \eps\, \partial_x\left(\frac{\ueps^{\theta+1}}{\seps^{\theta+1}}\, \partial_x \seps \right).
\end{align*}
These quantities satisfy the same bounds as in Proposition~\ref{prop:new_cons_law}. This yields a whole family of conservation laws, which is applicable only for $\alpha \leq 2$, for the same reason as explained in Remark~\ref{rem:integrability_of_derivative_sum_with_advection}.
\end{rem}

\subsection{The limit of $|\partial_x \seps^{\alpha} - \partial_x s^{\alpha}|^2$}\label{subsect:dissipation_measure_gradient_squared}
Finally, we will need the following identification of the limit of $|\partial_x \seps^{\alpha} - \partial_x s^{\alpha}|^2$. 
\begin{proposition}\label{prop:weak_compact_partial_x_seps}
Let $u, v, s$ be the limits as in \ref{conv:sum_strong}, \ref{conv:u_v_weak} and let $\overline{u\, \partial_x s^{\alpha}}$ be the weak limit as in \ref{conv_product_u_derivative_s_alpha}. The sequence $\{ |\partial_x \seps^{\alpha} - \partial_x s^{\alpha}|^2 \}_{\eps\in(0,1)}$ converges weakly in $\mathcal{M}_{\loc}([0,T]\times\R)$ to
$$
|\partial_x \seps^{\alpha} - \partial_x s^{\alpha}|^2 \weak \alpha\,(\overline{u\, \partial_x s^{\alpha}} - u\, \partial_x s^{\alpha})\,  \partial_x (\mathcal{V}^2[u,v] - \mathcal{V}^1[u,v]).
$$
\end{proposition}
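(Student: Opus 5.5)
The idea is to exploit the equation for the sum $\seps$, integrated in space, which expresses $\partial_x \seps^{\alpha}$ through quantities whose limits are known up to the single unknown weak limit $\overline{u\,\partial_x s^\alpha}$. Summing the two equations in \eqref{eq:general_cross_diffusion_intro_viscosity} and integrating over $(-\infty,x)$ gives
\begin{equation*}
\frac{1}{\alpha}\,\partial_x \seps^{\alpha} = \int_{-\infty}^x \partial_t \seps \diff y - \seps\, \partial_x \mathcal{V}^2[\ueps,\veps] - \ueps\, \partial_x(\mathcal{V}^1[\ueps,\veps]-\mathcal{V}^2[\ueps,\veps]) - \eps\, \partial_x \seps .
\end{equation*}
Here I rewrote $\ueps \partial_x\mathcal{V}^1+\veps\partial_x\mathcal{V}^2=\seps\partial_x\mathcal{V}^2+\ueps\,\partial_x(\mathcal{V}^1-\mathcal{V}^2)$, and on $\{\seps=0\}$ the identity is read with $\partial_x\seps^\alpha=0$. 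Passing to the weak limit with \ref{conv:integrated_time_der_sum}, \ref{conv:sum_strong}, \ref{conv:velocity_fields}, \ref{conv:u_v_weak} and \ref{est:energy_derivatives_u_v_eps_signular} gives the same identity for $s,u$ with $\eps=0$. In that limit identity the only weakly converging term multiplied by a strongly converging one is $\ueps\,\partial_x(\mathcal{V}^1-\mathcal{V}^2)$, which converges to $u\,\partial_x(\mathcal{V}^1[u,v]-\mathcal{V}^2[u,v])$.

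Next, expand $|\partial_x \seps^\alpha-\partial_x s^\alpha|^2 = |\partial_x\seps^\alpha|^2 - 2\,\partial_x\seps^\alpha\,\partial_x s^\alpha + |\partial_x s^\alpha|^2$. The last two terms converge to $-|\partial_x s^\alpha|^2$ by \ref{conv:gradient_sum}, so everything reduces to identifying $\overline{|\partial_x s^\alpha|^2}$. For this, multiply the $\eps$-identity by $\alpha\,\partial_x\seps^\alpha$ and test with $\varphi\in C_c^\infty$. The resulting products behave as follows.
\begin{itemize}
\item The term $\partial_x\seps^\alpha\,\seps\,\partial_x\mathcal{V}^2$ is weak times strong and so converges to the expected limit.
\item The term $\eps\,\partial_x\seps\,\partial_x\seps^\alpha$ tends to $0$: it is controlled by $\sqrt{\eps}\cdot\sqrt{\eps}\,\|\partial_x\seps\|_{L^2}\,\|\partial_x\seps^\alpha\|_{L^2}$ using \ref{est:energy_derivatives_u_v_eps_signular}.
\item The term $\ueps\,\partial_x\seps^\alpha\,\partial_x(\mathcal{V}^1-\mathcal{V}^2)$ converges to $\overline{u\,\partial_x s^\alpha}\,\partial_x(\mathcal{V}^1-\mathcal{V}^2)$ by \ref{conv_product_u_derivative_s_alpha} and \ref{conv:velocity_fields}.
\item The term $\int_{-\infty}^x\partial_t\seps\diff y\cdot\partial_x\seps^\alpha$ is the delicate product of two weakly converging sequences.
\end{itemize}

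For the delicate term, write $W_\eps:=\int_{-\infty}^x \seps\diff y$, so that the term is $\partial_t W_\eps\,\partial_x \seps^\alpha$. Integrating by parts in $t$ and then in $x$ turns it into $\int \seps^\alpha\,\partial_x(\partial_t W_\eps\,\varphi)$ minus boundary and commutator terms. More cleanly, I would use the Div-Curl structure. The pair $(W_\eps, \seps^\alpha)$ has $\partial_t W_\eps$ bounded in $L^2$ and $\partial_x W_\eps=\seps$ strongly compact. Equivalently, $\partial_t W_\eps\,\partial_x\seps^\alpha-\partial_x W_\eps\,\partial_t\seps^\alpha$ is a div-curl combination, but $\partial_t\seps^\alpha$ is not well controlled. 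So I would instead integrate by parts directly:
\[
\int\!\!\int \partial_t W_\eps\,\partial_x\seps^\alpha\,\varphi = -\int\!\!\int \seps\,\partial_t \seps^\alpha\varphi\,\ldots
\]
This is awkward, so the preferred route is to write $\int\!\!\int\partial_tW_\eps\,\partial_x\seps^\alpha\varphi = -\int\!\!\int \partial_x\partial_t W_\eps\,\seps^\alpha\varphi - \int\!\!\int\partial_t W_\eps\,\seps^\alpha\partial_x\varphi$. The first term equals $-\int\!\!\int \partial_t\seps\,\seps^\alpha\varphi = -\frac{1}{\alpha+1}\int\!\!\int\partial_t\seps^{\alpha+1}\varphi=\frac{1}{\alpha+1}\int\!\!\int\seps^{\alpha+1}\partial_t\varphi$ (plus an initial-time term if $\varphi$ is not compactly supported in time). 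Both pieces then pass to the limit by strong convergence of $\seps$ together with \ref{conv:integrated_time_der_sum}. Since the same manipulation is valid for the limit $s$ (with $\partial_t s\in L^2H^{-1}$ and $s^\alpha\in L^2H^1$, justified by mollification), this product converges to $\int_{-\infty}^x\partial_t s\,\partial_x s^\alpha$.

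Collecting terms, $\overline{|\partial_x s^\alpha|^2}$ equals $\alpha\,\partial_x s^\alpha$ times the right-hand side of the limit identity, with $u\,\partial_x s^\alpha$ replaced by $\overline{u\,\partial_x s^\alpha}$. Substituting the limit identity gives $\overline{|\partial_x s^\alpha|^2}-|\partial_x s^\alpha|^2=\alpha(\overline{u\,\partial_x s^\alpha}-u\,\partial_x s^\alpha)\,\partial_x(\mathcal{V}^2-\mathcal{V}^1)$, as claimed. I expect the main obstacle to be justifying the time-derivative product rigorously: the chain rule $\partial_t\seps\,\seps^\alpha=\frac{1}{\alpha+1}\partial_t\seps^{\alpha+1}$ for the limit in the duality $L^2H^{-1}\times L^2H^1$ (a Lions–Magenes type lemma, after localization), and the vacuum set where the integrated equation needs care.
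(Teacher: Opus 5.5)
Your proposal is correct and follows essentially the same route as the paper. You integrate the summed equation in space, pass to the limit to get the identity for $s$, and multiply both identities by $\partial_x\seps^{\alpha}$ and $\partial_x s^{\alpha}$ respectively. The delicate product $\big[\int_{-\infty}^x\partial_t\seps\diff y\big]\,\partial_x\seps^{\alpha}$ is handled exactly as in the paper: integrate by parts in $x$, use $\partial_t\seps\,\seps^{\alpha}=\frac{1}{\alpha+1}\partial_t\seps^{\alpha+1}$, pass to the limit, and reverse the manipulation for $s$ by a local approximation argument. You then conclude by expanding the square. The div-curl detour you consider along the way is not needed.
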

\begin{proof}
The integrated PDE in space for $\seps$ (obtained by summing up \eqref{eq:general_cross_diffusion_intro_viscosity}, using the computation in \eqref{eq:pressure_term_rewritten_derivative_intro} and integrating in space) reads
\begin{equation}\label{eq:integrated_PDE_for_seps_very_first_step_proof}
 \int_{-\infty}^x \partial_t \seps(t,y)\diff y = \frac{1}{\alpha} \partial_x \seps^{\alpha} + \ueps\, \partial_x \mathcal{V}^1[\ueps,\veps] + \veps \, \partial_x \mathcal{V}^2[\ueps,\veps] + \eps\, \partial_x \seps.
\end{equation}
Note that when summing up equations in \eqref{eq:general_cross_diffusion_intro_viscosity} we used $\ueps/\seps + \veps/\seps = 1$. This is not true when $\seps = 0$ because then, by our definition, $\ueps/\seps = \veps/\seps = 0$. However, if $\seps = 0$, then $\frac{1}{\alpha} \partial_x \seps^{\alpha} = \seps^{\alpha-1}\,\partial_x \seps = \seps^{\frac{\alpha}{2}}\,\seps^{\frac{\alpha}{2} - 1}\, \partial_x \seps$ and we know that $\seps^{\frac{\alpha}{2} - 1}\, \partial_x \seps \in L^2((0,T)\times\R)$ by \ref{est:crucial_estimate_gradient_power_alpha} so it is finite a.e. and so we obtain $\frac{1}{\alpha} \partial_x \seps^{\alpha} = 0$ a.e. on $\{\seps = 0\}$. Hence, \eqref{eq:integrated_PDE_for_seps_very_first_step_proof} is still satisfied. We will omit variables $(t,y)$ in what follows. Writing $\veps = \seps - \ueps$ we obtain
\begin{equation}\label{eq:integrated_in_x_PDE_for_seps}
\int_{-\infty}^x \partial_t  \seps \diff y = \frac{1}{\alpha} \partial_x \seps^{\alpha} + \ueps\, \partial_x (\mathcal{V}^1[\ueps,\veps] - \mathcal{V}^2[\ueps,\veps]) + \seps \, \partial_x \mathcal{V}^2[\ueps,\veps] + \eps\, \partial_x \seps.
\end{equation}
Passing to the limit $\eps \to 0$ in \eqref{eq:integrated_in_x_PDE_for_seps} (say, in the sense of distributions) is easy because all nonlinear terms are converging by \ref{conv:sum_strong}, \ref{conv:gradient_sum} and \ref{conv:velocity_fields} so we obtain
\begin{equation}\label{eq:integrated_in_x_PDE_for_s}
 \int_{-\infty}^x \partial_t s \diff y = \frac{1}{\alpha} \partial_x s^{\alpha} + u\, \partial_x (\mathcal{V}^1[u,v]-\mathcal{V}^2[u,v]) + s \, \partial_x \mathcal{V}^2[u,v].
\end{equation}
We multiply \eqref{eq:integrated_in_x_PDE_for_seps} by $\partial_x \seps^{\alpha}$ and \eqref{eq:integrated_in_x_PDE_for_s} by $\partial_x s^{\alpha}$ to obtain two identities
\begin{equation}\label{eq:integrated_in_x_PDE_for_seps_mult_by_der}
\begin{split}
\left[ \int_{-\infty}^x \partial_t \seps \diff y \right] \, \partial_x \seps^{\alpha} = \frac{1}{\alpha} |\partial_x \seps^{\alpha}|^2 + \ueps\, \partial_x \seps^{\alpha}\,  &\partial_x (\mathcal{V}^1[\ueps,\veps] - \mathcal{V}^2[\ueps,\veps]) \\ &+ \seps \, \partial_x \seps^{\alpha}\, \partial_x \mathcal{V}^2[\ueps,\veps] + \eps\, \partial_x \seps\, \partial_x \seps^{\alpha},
\end{split}
\end{equation}
\begin{equation}\label{eq:integrated_in_x_PDE_for_s_mult_by_der}
\left[ \int_{-\infty}^x \partial_t s \diff y \right] \, \partial_x s^{\alpha} = \frac{1}{\alpha} |\partial_x s^{\alpha}|^2 + u\, \partial_x s^{\alpha}\,  \partial_x (\mathcal{V}^1[u,v] - \mathcal{V}^2[u,v]) \,+ s \, \partial_x s^{\alpha}\, \partial_x \mathcal{V}^2[u,v].
\end{equation}
The idea is to pass to the limit $\eps \to 0$ in \eqref{eq:integrated_in_x_PDE_for_seps_mult_by_der} and compare the resulting terms with \eqref{eq:integrated_in_x_PDE_for_s_mult_by_der}. First,
\begin{equation}\label{eq:result_about_convergence_product_integral_time_derivative_and_power_of_s}
\left[\int_{-\infty}^x \partial_t  \seps \diff y\right] \, \partial_x \seps^{\alpha} \to \left[\int_{-\infty}^x \partial_t s \diff y \right] \, \partial_x s^{\alpha} \mbox{ in } \mathcal{D}'((0,T)\times\R).
\end{equation}
Indeed, for any $\varphi \in C_c^{\infty}((0,T)\times\R)$ we have
\begin{multline*}
\int_0^T\int_{\R} \varphi \left[ \int_{-\infty}^x \partial_t \seps \diff y\right] \, \partial_x \seps^{\alpha} \diff x \diff t =\\ = -\int_0^T\int_{\R} \varphi \, \partial_t \seps\, \seps^{\alpha} \diff x \diff t -\int_0^T\int_{\R} \partial_x \varphi \left[ \int_{-\infty}^x \partial_t \seps \diff y\right] \, \seps^{\alpha} \diff x \diff t.
\end{multline*}
The first term converges because we can write $\partial_t \seps\, \seps^{\alpha}$ as $\frac{1}{\alpha+1} \partial_t \seps^{\alpha+1}$, then pass the $\partial_t$ derivative on the test function $\varphi$ and use the dominated convergence ($\seps^{1+\alpha}$ converges a.e. by \ref{conv:sum_strong}, it is uniformly bounded by \ref{est:conservation} while $\varphi$ has compact support). The second term uses weak convergence of $\left[ \int_{-\infty}^x \partial_t \seps \diff y\right]$ in \ref{conv:integrated_time_der_sum} and strong convergence of $\seps^{\alpha}\to s^{\alpha}$ in $L^2(\supp \varphi)$ (since $\supp \varphi$ is compact, the $L^{\infty}((0,T)\times\R)$ bound on $\{\seps\}$ in \ref{est:conservation} and its a.e. convergence in \ref{conv:sum_strong} is sufficient to apply the dominated convergence). Hence,
\begin{multline*}
\lim_{\eps\to0} \int_0^T\int_{\R} \varphi \left[ \int_{-\infty}^x \partial_t \seps \diff y\right] \, \partial_x \seps^{\alpha} \diff x \diff t = \\ = -\frac{1}{1+\alpha} \int_0^T\int_{\R} \varphi \, \partial_t s^{1+\alpha} \diff x \diff t -\int_0^T\int_{\R} \partial_x \varphi \left[ \int_{-\infty}^x \partial_t s \diff y\right] \, s^{\alpha} \diff x \diff t.
\end{multline*}
We can now integrate by parts backwards (i.e. invert all operations above) to arrive at \eqref{eq:result_about_convergence_product_integral_time_derivative_and_power_of_s} (rigorously, to write $\partial_t s^{1+\alpha} = (1+\alpha)\, \partial_t s \, s^{\alpha}$ we use that $\partial_t s \in L^2(0,T; H^{-1}(\R))$ by \ref{est:partial_t_neg_ss} while $s^{\alpha} \in L^2(0,T; H^1_{\loc}(\R))$ by \ref{est:crucial_estimate_gradient_power_alpha} and \ref{est:conservation} so their product makes sense locally and we can argue by an approximation argument; we need to work here locally as $s^{\alpha}$ may not be in $L^2((0,T)\times\R)$ for small $\alpha$). \\

We now explain how to pass to the limit in the sense of distributions in the terms on the (RHS) of \eqref{eq:integrated_in_x_PDE_for_seps_mult_by_der}. For the first we use \ref{conv_nabla_s_alpha_squared_in_measures}, for the second we exploit \ref{conv:velocity_fields} and \ref{conv_product_u_derivative_s_alpha}, for the third one we use \ref{conv:sum_strong}, \ref{conv:gradient_sum} and \ref{conv:velocity_fields},  while for the last one we observe by \ref{est:crucial_estimate_gradient_power_alpha} and~\ref{est:energy_derivatives_u_v_eps_signular}
$$
\eps\, \int_0^T \int_{\R} |\partial_x \seps\, \partial_x \seps^{\alpha}| \diff x \diff t \leq \eps^{\frac{1}{2}}\, \| \eps^{\frac{1}{2}} \,\partial_x \seps \|_{L^2((0,T)\times\R)} \, \| \partial_x \seps^{\alpha} \|_{L^2((0,T)\times\R)} \to 0.
$$ 
Hence, passing to the limit in \eqref{eq:integrated_in_x_PDE_for_seps_mult_by_der} in the sense of distributions we obtain
$$
\left[ \int_{-\infty}^x \partial_t s \diff y \right] \, \partial_x s^{\alpha} = \frac{1}{\alpha} \overline{|\partial_x s^{\alpha}|^2} + \overline{u\, \partial_x s^{\alpha}}\,  \partial_x (\mathcal{V}^1[u,v] - \mathcal{V}^2[u,v]) + s \, \partial_x s^{\alpha}\, \partial_x \mathcal{V}^2[u,v]
$$
and comparing it with \eqref{eq:integrated_in_x_PDE_for_s_mult_by_der} we obtain
\begin{equation}\label{eq:identity_between_weak_limits_lemma_proof_difference}
\frac{1}{\alpha}\left( \overline{|\partial_x s^{\alpha}|^2} -  |\partial_x s^{\alpha}|^2\right)  =  (\overline{u\, \partial_x s^{\alpha}} - u\, \partial_x s^{\alpha})\,  \partial_x (\mathcal{V}^2[u,v] - \mathcal{V}^1[u,v]).
\end{equation}
To conclude the proof, we write
$$
|\partial_x\seps^{\alpha} - \partial_x s^{\alpha}|^2 = | \partial_x\seps^{\alpha}| ^2 - 2\, \partial_x\seps^{\alpha}\,\partial_x s^{\alpha} + |\partial_x^{\alpha} s^{\alpha}|^2.
$$
Since $2\, \partial_x\seps^{\alpha}\,\partial_x s^{\alpha} \weak  2 \, |\partial_x s^{\alpha}|^2$ in $L^1((0,T)\times\R)$ (because of \ref{conv:gradient_sum} and $\partial_x s^{\alpha} \in L^2((0,T)\times\R)$), the desired representation of the limit follows from \eqref{eq:identity_between_weak_limits_lemma_proof_difference}. 
\end{proof}

\section{Young measure representations and compensated compactness}\label{sect:young_measures_compensated_compactness}

\subsection{Young measures}\label{subsect:young_measures} We recall that we define $\frac{\ueps}{{\seps}}=0$ when $\seps = 0$ and $\frac{u}{s} = 0$ when $s=0$ (this does not matter much because when it comes to identification of the limit $\frac{\ueps}{\seps}\partial_x \seps^{\alpha}$, the case $s(t,x)=0$ is easy and we will restrict the reasoning almost immediately to the case $s(t,x)>0$, see Step 1 in Proposition \ref{prop:support_YM_is_line}).\\

For each $(t,x)$ we define the measure $\pi_{t,x}$ on $\R \times \R$ to be the Young measure (see \cite{MR6023} for the original paper by Young, who addressed the lack of strong convergence in problems of the calculus of variations; \cite{MR1036070} for the work of Ball, who introduced this concept into the study of PDEs; and \cite[Chapter~6]{MR1452107} and \cite[Chapter~4]{MR3821514} for more comprehensive treatments) of the sequence
\begin{equation}\label{eq:sequence}
\left(\frac{\ueps}{{\seps}} - \frac{u}{{s}},  \partial_x s_{\eps}^{\alpha} -  \partial_x s^{\alpha} \right).
\end{equation}
To say that $\pi_{t,x}$ is the Young measure of the sequence in \eqref{eq:sequence} means that:
\begin{itemize}
\item for any Carathéodory function $f(t,x,\lambda_1, \lambda_2): [0,T]\times\R\times\R^2 \to \R$ (i.e. measurable in $(t,x)$ and continuous in $(\lambda_1, \lambda_2)$) we have that 
$$
(t,x) \mapsto \int_{\R^2} f(t,x,\lambda_1, \lambda_2) \diff \pi_{t,x}(\lambda_1, \lambda_2) \mbox{ is (Lebesgue) measurable,}
$$
\item for any Carathéodory function $f(t,x,\lambda_1, \lambda_2): [0,T]\times\R\times\R^2 \to \R$ such that $$
\Big\{f\Big(t, x, \frac{\ueps}{{\seps}} - \frac{u}{{s}}, \partial_x s_{\eps}^{\alpha} -  \partial_x s^{\alpha} \Big)\Big\}_{\eps\in(0,1)}
$$
is weakly compact in $L^1_{\loc}((0,T)\times\R)$, we have
\begin{equation}\label{eq:representation_weak_limits_general_sequence_caratheodory_integrand}
f\left(t,x,\frac{\ueps}{{\seps}} - \frac{u}{{s}}, \partial_x s_{\eps}^{\alpha} -  \partial_x s^{\alpha} \right) \weak  \int_{\R^2} f(t,x,\lambda_1, \lambda_2) \diff \pi_{t,x}(\lambda_1, \lambda_2)
\end{equation}
weakly in $L^1_{\loc}((0,T)\times\R)$ after passing to a converging subsequence. In particular, if we pass to a subsequence such that $f\Big(t, x, \frac{\ueps}{{\seps}} - \frac{u}{{s}}, \partial_x s_{\eps}^{\alpha} -  \partial_x s^{\alpha} \Big) \weak  \overline{f}$, then by extracting a further subsequence satisfying \eqref{eq:representation_weak_limits_general_sequence_caratheodory_integrand}, we deduce that $\overline{f}$ is given by \eqref{eq:representation_weak_limits_general_sequence_caratheodory_integrand}. 
\end{itemize}
According to \cite[Theorem 6.2]{MR1452107}, the Young measure of a sequence $\{z_\eps\}$ exists whenever $\sup_{\eps\in(0,1)} \int_0^T \int_{\R} g(|z_\eps|) \diff x \diff t < \infty$ for some $g:[0,\infty) \to [0,\infty]$ continuous, nondecreasing and $\lim_{t\to\infty} g(t) = \infty$ which is clearly satisfied in our case by \ref{est:crucial_estimate_gradient_power_alpha} in Proposition \ref{prop:uniform_estimates_viscosity} and~$\frac{\ueps}{{\seps}}\leq 1$.\\

We will also need some information when the sequence $\Big\{f\Big(\frac{\ueps}{{\seps}} - \frac{u}{{s}}, \partial_x s_{\eps}^{\alpha} -  \partial_x s^{\alpha} \Big)\Big\}_{\eps\in(0,1)}$ is only bounded in $L^1_{\loc}((0,T)\times\R)$. This is the argument taken from \cite[Lemma 2.1]{MR3567640} simplified to our setting.
\begin{lem}\label{lem:bound_by_the_YM_from_below_only_L1_estimate}
Suppose that $f:\R^2\to [0,\infty)$ is continuous and $\Big\{f\Big(\frac{\ueps}{{\seps}} - \frac{u}{{s}}, \partial_x s_{\eps}^{\alpha} -  \partial_x s^{\alpha} \Big)\Big\}_{\eps\in(0,1)}$ is bounded in $L^1_{\loc}((0,T)\times\R)$. If $\widetilde{f}$ is its weak$^*$ limit (in the sense of $\mathcal{M}_{\loc}([0,T]\times\R)$), then
\begin{equation}\label{eq:estimate_YM_in_case_of_only_L^1_estimate}
\widetilde{f} \geq  \int_{\R^2} f(\lambda_1, \lambda_2) \diff \pi_{t,x}(\lambda_1, \lambda_2) \mbox{ in } \mathcal{D}'((0,T)\times\R).
\end{equation}
\end{lem}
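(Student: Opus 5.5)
The plan is to truncate $f$ so that the Young measure representation \eqref{eq:representation_weak_limits_general_sequence_caratheodory_integrand} applies, and then pass to the limit in the truncation level by monotone convergence. Write
\[
z_\eps=\Big(\frac{\ueps}{\seps}-\frac{u}{s},\ \partial_x\seps^{\alpha}-\partial_x s^{\alpha}\Big)
\]
for brevity. For $k\in\N$ set $f_k=\min(f,k)$. Each $f_k$ is continuous, nonnegative and bounded, so $\{f_k(z_\eps)\}$ is bounded in $L^\infty((0,T)\times\R)$ and therefore weakly compact in $L^1_{\loc}((0,T)\times\R)$ (equi-integrability on compact sets is immediate from the uniform bound). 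By \eqref{eq:representation_weak_limits_general_sequence_caratheodory_integrand}, along the subsequence on which $\pi_{t,x}$ is defined (after a further extraction if needed) we get
\[
f_k(z_\eps)\weak \int_{\R^2} f_k(\lambda)\diff\pi_{t,x}(\lambda)
\]
weakly in $L^1_{\loc}((0,T)\times\R)$.

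Next, fix a nonnegative $\varphi\in C_c^\infty((0,T)\times\R)$. Since $0\le f_k\le f$ pointwise, $\int\varphi\,f_k(z_\eps)\le\int\varphi\,f(z_\eps)$ for every $\eps$. Let $\eps\to0$ along the subsequence realizing $\widetilde f$ as the weak$^*$ limit in $\mathcal{M}_{\loc}$; this subsequence can be chosen compatible with the previous extraction by a diagonal argument over $k$. The left-hand side tends to $\int\varphi\int f_k\diff\pi_{t,x}\diff x\diff t$ and the right-hand side tends to $\langle\widetilde f,\varphi\rangle$, because $\varphi$ is continuous with compact support and the measures converge weakly$^*$. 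This gives
\[
\int_0^T\!\!\int_{\R}\varphi\int_{\R^2}f_k\diff\pi_{t,x}\diff x\diff t\le\langle\widetilde f,\varphi\rangle\qquad\text{for all }k.
\]

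Finally, let $k\to\infty$. Since $f_k\uparrow f$ pointwise, the monotone convergence theorem, applied first in $\lambda$ for each fixed $(t,x)$ and then in $(t,x)$ with the nonnegative weight $\varphi$, gives convergence of the left-hand side to $\int\varphi\int f\diff\pi_{t,x}$. In particular this function is locally integrable, and \eqref{eq:estimate_YM_in_case_of_only_L^1_estimate} holds against all nonnegative test functions, which is precisely the inequality in $\mathcal{D}'$.

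The only delicate point is bookkeeping of subsequences. One must make sure the subsequence for $\widetilde f$ and the one along which the representation holds for every $f_k$ coincide. Because the Young measure is fixed once and for all on the subsequence from Section~\ref{subsect:young_measures}, and any further subsequence generates the same Young measure, this is handled by the diagonal extraction. Everything else is routine.
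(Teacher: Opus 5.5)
Your proof is correct and follows the paper's argument: truncate, apply the Young measure representation to the truncated integrands along a diagonal subsequence, compare with $\widetilde f$ against nonnegative test functions, and let $k\to\infty$ by monotone convergence. The only difference is that you truncate with $\min(f,k)$, which is continuous, whereas the paper uses $f\,\mathds{1}_{f\le k}$, which is generally discontinuous; your choice therefore fits the Carath\'eodory representation \eqref{eq:representation_weak_limits_general_sequence_caratheodory_integrand} directly and is slightly cleaner.
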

\begin{proof}
Let $\varphi \in C_c^{\infty}((0,T)\times\R)$ with $\varphi \geq 0$. By a diagonal method, we choose a subsequence such that $\Big\{f\Big(\frac{\ueps}{{\seps}} - \frac{u}{{s}}, \partial_x s_{\eps}^{\alpha} -  \partial_x s^{\alpha} \Big) \, \mathds{1}_{ f(\frac{\ueps}{{\seps}} - \frac{u}{{s}}, \partial_x s_{\eps}^{\alpha} -  \partial_x s^{\alpha})\leq k}\Big\}$ is weakly converging in $L^1_{\loc}((0,T)\times\R)$ to its Young measure representation \eqref{eq:representation_weak_limits_general_sequence_caratheodory_integrand} for each $k \in \N$. Then,
\begin{align*}
\int_0^T \int_{\R} &\varphi(t,x) \diff \widetilde{f}(t,x) = \lim_{\eps \to 0} \int_0^T \int_{\R} \varphi(t,x)\, f\Big(\frac{\ueps}{{\seps}} - \frac{u}{{s}},\, \partial_x s_{\eps}^{\alpha} -  \partial_x s^{\alpha} \Big) \diff x \diff t\\
&\geq \lim_{\eps \to 0}  \int_0^T \int_{\R} \varphi(t,x)\, f\Big(\frac{\ueps}{{\seps}} - \frac{u}{{s}}, \partial_x s_{\eps}^{\alpha} -  \partial_x s^{\alpha} \Big) \, \mathds{1}_{ f(\frac{\ueps}{{\seps}} - \frac{u}{{s}},\, \partial_x s_{\eps}^{\alpha} -  \partial_x s^{\alpha} )\leq k} \diff x \diff t \\
&= \int_0^T \int_{\R}  \int_{\R^2} \varphi(t,x) \, f(\lambda_1, \lambda_2)\, \mathds{1}_{f(\lambda_1, \lambda_2)\leq k} \diff \pi_{t,x}(\lambda_1, \lambda_2) \diff x \diff t.
\end{align*}
By monotone convergence we can pass to the limit $k \to \infty$ and this concludes the proof.
\end{proof}
From \eqref{eq:estimate_YM_in_case_of_only_L^1_estimate} and the estimate on $\{\partial_x \seps^{\alpha}\}_{\eps\in(0,1)}$ (\ref{est:crucial_estimate_gradient_power_alpha} in Proposition \ref{prop:uniform_estimates_viscosity}), we also observe that for a.e. $(t,x)$, $\pi_{t,x}$ has the following integrability
$$
 \int_{\R^2} (|\lambda_1|^k+1)\,|\lambda_2|^2 \diff \pi_{t,x}(\lambda_1, \lambda_2) < \infty \mbox{ for all $k \in \N$}.
$$

\subsection{Div-Curl Lemma and compensated compactness}\label{subsect:div-curl-lemma}

The idea is now to find some constraints on $\pi_{t,x}$. The main tool will be a version of Div-Curl Lemma \cite[p.54]{MR1034481} (see also \cite{MR506997, MR584398} for the original references) whose original statement is recalled below:
\begin{thm}[Div-Curl Lemma]\label{thm:div_curl_lemma}
Let $\{\bf a_\eps\}$, $\{\bf b_\eps\}$ be two sequences converging weakly in $L^2_{\loc}(\Omega;\R^k)$ to some ${\bf a}$, ${\bf b}$ for some $\Omega \subset \R^k$. Suppose moreover that $\{\DIV {\bf a_\eps} \}$ and $\{\CURL  {\bf b_\eps} \}$ lie in a compact subset of $H^{-1}_{\loc}(\Omega)$. Then, $a_\eps \cdot b_\eps \to a \cdot b$ as $\eps\to0$ in the sense of distributions.
\end{thm}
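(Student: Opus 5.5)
The statement is the classical Div-Curl Lemma of Murat and Tartar, quoted from \cite{MR1034481}. I would follow the standard Hodge-decomposition proof. Since the conclusion is local, fix $\varphi \in C_c^{\infty}(\Omega)$. It suffices to show that $\int \varphi\, {\bf a_\eps}\cdot{\bf b_\eps} \to \int \varphi\, {\bf a}\cdot{\bf b}$. Choose a ball $B \supset \supp \varphi$ with $\overline{B}\subset \Omega$ and a cutoff $\chi \in C_c^\infty(\Omega)$ with $\chi = 1$ on $B$. Replace ${\bf a_\eps}$, ${\bf b_\eps}$ by $\chi {\bf a_\eps}$, $\chi{\bf b_\eps}$. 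Then $\DIV(\chi{\bf a_\eps}) = \chi\DIV{\bf a_\eps} + \nabla\chi\cdot{\bf a_\eps}$. The first term is compact in $H^{-1}$ by assumption. The second is bounded in $L^2$ with compact support, hence compact in $H^{-1}$ by Rellich. The same holds for $\CURL(\chi{\bf b_\eps})$. So we may assume everything is supported in a fixed compact set and work on all of $\R^k$.

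\textbf{Decomposition.} Write $\chi{\bf b_\eps} = \nabla \psi_\eps + {\bf r_\eps}$ with $\psi_\eps = \Delta^{-1}\DIV(\chi{\bf b_\eps})$. On a large torus, or with the Newtonian potential cut off, $\nabla\psi_\eps$ is bounded in $L^2$ and $\psi_\eps$ is bounded in $H^1_{\loc}$. The remainder ${\bf r_\eps}$ satisfies $\DIV {\bf r_\eps}=0$ and is controlled by the curl, since $\Delta {\bf r_\eps}$ is a combination of derivatives of $\CURL(\chi{\bf b_\eps})$. Elliptic regularity ($\nabla\Delta^{-1}$ maps $H^{-1}$ to $L^2$ boundedly) then gives that ${\bf r_\eps}$ is compact in $L^2_{\loc}$. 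So ${\bf r_\eps}\to{\bf r}$ strongly, and $\int\varphi\,{\bf a_\eps}\cdot{\bf r_\eps} \to \int \varphi\,{\bf a}\cdot{\bf r}$ as a weak–strong product.

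\textbf{Gradient part.} Integrate by parts:
$$
\int \varphi\, {\bf a_\eps}\cdot\nabla\psi_\eps = -\langle \DIV{\bf a_\eps}, \varphi\,\psi_\eps\rangle - \int \psi_\eps\, {\bf a_\eps}\cdot\nabla\varphi.
$$
Since $\psi_\eps$ is bounded in $H^1$ on $\supp\varphi$, Rellich gives $\psi_\eps \to \psi$ strongly in $L^2_{\loc}$ and weakly in $H^1$. The second term is again weak–strong, so it converges. In the first term, $\DIV{\bf a_\eps}$ converges strongly in $H^{-1}$ (compactness plus identification of the limit by weak convergence), while $\varphi\psi_\eps \weak \varphi\psi$ weakly in $H^1_0$. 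A strong–weak duality pairing converges. Reassembling both pieces with the same identities for the limits gives $\int\varphi\,{\bf a}\cdot{\bf b}$.

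\textbf{Main obstacle.} The only delicate step is the decomposition. One must verify that the remainder ${\bf r_\eps}$ really inherits $L^2$-compactness from the $H^{-1}$-compactness of the curl. This follows from the boundedness of the Riesz-type operators $\nabla\Delta^{-1}\partial_j$ on $L^2$, which can be done most cleanly with Fourier transform on a torus after localization.
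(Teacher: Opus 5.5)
Your proposal is correct and is essentially the standard Hodge-decomposition proof of the result the paper quotes; the paper gives no proof of its own and only cites the lemma. The cited argument has the same structure as yours: localize, split $\chi{\bf b_\eps}$ into a gradient part and a remainder, handle the gradient part by integrating by parts against the $H^{-1}$-compact divergence, make the remainder $L^2$-compact via the curl, and pass to the limit in weak--strong products.

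The only point to add is that on the torus the remainder ${\bf r_\eps}$ also carries the zero Fourier mode of $\chi{\bf b_\eps}$. This mode is not controlled by the curl, but it is a sequence of constant vectors that converges because $\int \chi{\bf b_\eps} \to \int \chi{\bf b}$, so the $L^2$-compactness of ${\bf r_\eps}$ is unaffected.
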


Typically, for two continuity equations
\begin{align*}
\partial_t U_{\eps} &= \partial_x F_{\eps} + S_{\eps},\\
\partial_t V_{\eps} &= \partial_x G_{\eps} + R_{\eps},
\end{align*}
with $\{U_{\eps}\}$, $\{V_{\eps}\}$, $\{F_{\eps}\}$, $\{G_{\eps}\}$ bounded in $L^2_{\loc}((0,T)\times \R)$ and $\{S_{\eps}\}$, $\{R_{\eps}\}$ strongly compact in $H^{-1}_{\loc}((0,T)\times\R)$, one applies Theorem \ref{thm:div_curl_lemma} with 
$$
a_\eps = (U_{\eps}, - F_{\eps}), \qquad b_{\eps} = (G_{\eps}, V_{\eps}).
$$
We will apply these ideas twice and for the second application we need to slightly relax the assumptions of Theorem~\ref{thm:div_curl_lemma}. The following variant is proved in \cite{MR2769903}.

\begin{thm}\label{thm:div_curl_lemma_version_CR}
Let $\{\bf a_\eps\}$, $\{\bf b_\eps\}$ be two sequences converging weakly in $L^2(\Omega;\R^k)$ to some ${\bf a}$, ${\bf b}$ for a bounded Lipschitz set $\Omega \subset \R^k$. Suppose moreover that $\{\DIV {\bf a_\eps} \}$ and $\{\CURL  {\bf b_\eps} \}$ lie in a compact subset of $(W^{1,\infty}_0(\Omega))^*$ and that the sequence $\{{\bf a_\eps} \cdot {\bf b_\eps}  \}$ is uniformly integrable in $L^1(\Omega)$:
$$
\forall \delta>0 \quad \exists \kappa>0: \quad   A\subset \Omega, |A| \leq \kappa \implies \sup_{\eps \in (0,1)} \int_{A} |{\bf a_\eps} \cdot {\bf b_\eps}| \diff x \leq \delta.
$$
Then, ${\bf a_\eps} \cdot {\bf b_\eps} \to {\bf a} \cdot {\bf b}$ as $\eps\to0$ in the sense of distributions.
\end{thm}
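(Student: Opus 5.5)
The plan is to reduce the statement to the classical Div-Curl Lemma (Theorem \ref{thm:div_curl_lemma}) by a Lipschitz truncation argument, using the uniform integrability of $\{{\bf a_\eps}\cdot{\bf b_\eps}\}$ to absorb the errors on the set where truncation is active. First I localize. Fix $\varphi\in C_c^\infty(\Omega)$; it suffices to show $\int_\Omega \varphi\,{\bf a_\eps}\cdot{\bf b_\eps}\to\int_\Omega\varphi\,{\bf a}\cdot{\bf b}$. Subtracting the limits, I may assume ${\bf a}={\bf b}=0$, since the cross terms ${\bf a_\eps}\cdot{\bf b}$ and ${\bf a}\cdot{\bf b_\eps}$ converge weakly. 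Uniform integrability of the product is not affected by this subtraction, up to equi-integrable terms.

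Next I decompose ${\bf b_\eps}$ by a Hodge decomposition on a ball $B\supset\supp\varphi$ (after extending by cutoff), writing ${\bf b_\eps}=\nabla z_\eps+{\bf w_\eps}$. Here $z_\eps\in H^1_0(B)$ with $\|\nabla z_\eps\|_{L^2}\le C$ and $z_\eps\to0$ strongly in $L^2$. The field ${\bf w_\eps}$ solves $\DIV {\bf w_\eps}=0$, $\CURL{\bf w_\eps}=\CURL(\chi{\bf b_\eps})$. The term $\int\varphi\,{\bf a_\eps}\cdot{\bf w_\eps}$ should be handled by the same scheme with roles exchanged, or via the compactness of $\CURL {\bf b_\eps}$ in $(W^{1,\infty}_0)^*$, which makes ${\bf w_\eps}$ small in a weak sense. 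This reduces matters to the gradient term $\int\varphi\,{\bf a_\eps}\cdot\nabla z_\eps$.

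For the gradient term I apply the Lipschitz truncation lemma (Acerbi--Fusco). For each $\lambda>0$ there is $z_\eps^\lambda$ with $|\nabla z^\lambda_\eps|\le C\lambda$, $z_\eps^\lambda=z_\eps$ outside $E^\lambda_\eps:=\{M(\nabla z_\eps)>\lambda\}$, and $|E^\lambda_\eps|\le C\lambda^{-2}\|\nabla z_\eps\|^2_{L^2}$. On the good part,
$$\int\varphi\,{\bf a_\eps}\cdot\nabla z^\lambda_\eps=-\langle\DIV{\bf a_\eps},\varphi z^\lambda_\eps\rangle-\int z^\lambda_\eps\,{\bf a_\eps}\cdot\nabla\varphi .$$
The family $\{\varphi z^\lambda_\eps\}_\eps$ is bounded in $W^{1,\infty}_0$ and converges to $0$ uniformly, by Arzel\`a--Ascoli and $z_\eps\to0$ in $L^2$. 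Strong precompactness of $\{\DIV{\bf a_\eps}\}$ in $(W^{1,\infty}_0)^*$ together with $\DIV{\bf a_\eps}\weak0$ then makes the first pairing vanish as $\eps\to0$ for fixed $\lambda$. The second term vanishes since $z_\eps^\lambda\to0$ uniformly.

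The bad set is the main obstacle. On $E^\lambda_\eps$ I must bound $\int_{E^\lambda_\eps}|{\bf a_\eps}\cdot\nabla z_\eps|$ and $\int_{E^\lambda_\eps}|{\bf a_\eps}|\,\lambda$. The first is small by the uniform integrability hypothesis once $\lambda$ is large, because $|E_\eps^\lambda|\lesssim\lambda^{-2}$ uniformly in $\eps$. Replacing ${\bf a_\eps}\cdot\nabla z_\eps$ by ${\bf a_\eps}\cdot{\bf b_\eps}$ costs a ${\bf w_\eps}$ term, handled as above. The naive bound for the second term, $\lambda\|{\bf a_\eps}\|_{L^2}|E^\lambda_\eps|^{1/2}\le C$, is only $O(1)$. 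Here I would use the Conti--Dolzmann--M\"uller pigeonhole trick. Consider the dyadic levels $\lambda_j=2^j\lambda_0$, $j=1,\dots,N$. The sets $\{\lambda_j<M\nabla z_\eps\le\lambda_{j+1}\}$ are disjoint, and $\sum_j\lambda_j^2|\{M\nabla z_\eps>\lambda_j\}|\lesssim\|\nabla z_\eps\|_{L^2}^2$. Hence for each $\eps$ some $j(\eps)\le N$ satisfies $\lambda_{j}^2|E^{\lambda_{j}}_\eps|\le C/N$. With this level the bad-set contribution is at most $C N^{-1/2}$, plus the uniform-integrability error, uniformly in $\eps$. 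Because only finitely many levels occur, the good-part convergence holds simultaneously for all of them. Letting $\eps\to0$, then $N\to\infty$ and $\lambda_0\to\infty$, gives the claim.
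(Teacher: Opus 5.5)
The paper does not prove this statement; it quotes it from \cite{MR2769903}. Your skeleton (Lipschitz truncation of a potential of $\mathbf{b}_\eps$ plus a dyadic choice of level) is reasonable. However, the way you dispose of the divergence-free part $\mathbf{w}_\eps$ is a genuine gap. The equi-integrability hypothesis concerns only the full product $\mathbf{a}_\eps\cdot\mathbf{b}_\eps$. Once you split $\mathbf{b}_\eps=\nabla z_\eps+\mathbf{w}_\eps$, neither $\mathbf{a}_\eps\cdot\nabla z_\eps$ nor $\mathbf{a}_\eps\cdot\mathbf{w}_\eps$ inherits it, and the two pieces need not vanish separately.

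Here is a counterexample to your intermediate claims. Take $k=2$, $0\in\Omega$, $\varphi(0)\neq0$, $\rho\in C_c^\infty(B_1)$ nonzero, $\zeta_\eps(x)=\rho(x/\eps)$, and set $\mathbf{a}_\eps=\nabla\zeta_\eps-\nabla^{\perp}\zeta_\eps$, $\mathbf{b}_\eps=\nabla\zeta_\eps+\nabla^{\perp}\zeta_\eps$.
\begin{itemize}
\item All hypotheses hold. Both fields converge weakly to $0$ in $L^2$, and $\mathbf{a}_\eps\cdot\mathbf{b}_\eps\equiv0$. Also $\DIV\mathbf{a}_\eps$ and $\CURL\mathbf{b}_\eps$ both equal $\Delta\zeta_\eps$ up to sign, whose $(W^{1,\infty}_0)^*$-norm is at most $\|\nabla\zeta_\eps\|_{L^1}=\eps\|\nabla\rho\|_{L^1}$.
\item For small $\eps$ your Hodge decomposition returns $z_\eps=\zeta_\eps$ and $\mathbf{w}_\eps=\nabla^{\perp}\zeta_\eps$. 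This $\mathbf{w}_\eps$ does tend to $0$ in measure, so it is ``small in a weak sense''.
\item Yet $\int\varphi\,\mathbf{a}_\eps\cdot\mathbf{w}_\eps=-\int\varphi\,|\nabla\zeta_\eps|^2\to-\varphi(0)\|\nabla\rho\|_{L^2}^2$, while $\int\varphi\,\mathbf{a}_\eps\cdot\nabla z_\eps\to+\varphi(0)\|\nabla\rho\|_{L^2}^2$.
\end{itemize}
So the reduction to the gradient term is false. The bad-set claim fails too: here $\int_{E^\lambda_\eps}|\mathbf{a}_\eps\cdot\nabla z_\eps|\to\|\nabla\rho\|_{L^2}^2$ for every fixed $\lambda$. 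Trading $\nabla z_\eps$ for $\mathbf{b}_\eps$ only moves this mass into $\int_{E^\lambda_\eps}|\mathbf{a}_\eps\cdot\mathbf{w}_\eps|$. Exchanging roles cannot help either, since nothing is known about $\CURL\mathbf{a}_\eps$.

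What is missing is to keep $\mathbf{w}_\eps$ inside the truncation argument. Since $\chi=1$ on $\supp\varphi$,
\[
\int\varphi\,\mathbf{a}_\eps\cdot\mathbf{b}_\eps=\int\varphi\,\mathbf{a}_\eps\cdot\nabla z^\lambda_\eps+\int_{E^\lambda_\eps}\varphi\,\mathbf{a}_\eps\cdot(\mathbf{b}_\eps-\nabla z^\lambda_\eps)+\int_{\Omega\setminus E^\lambda_\eps}\varphi\,\mathbf{a}_\eps\cdot\mathbf{w}_\eps .
\]
\begin{itemize}
\item On the bad set, keep the full product $\mathbf{a}_\eps\cdot\mathbf{b}_\eps$, which is equi-integrable while $|E^\lambda_\eps|\lesssim\lambda^{-2}$. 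Treat $\lambda\int_{E^\lambda_\eps}|\mathbf{a}_\eps|$ with your pigeonhole.
\item Off the bad set, $|\nabla z_\eps|\le M(\nabla z_\eps)\le\lambda$. Hence $|\mathbf{a}_\eps\cdot\mathbf{w}_\eps|\le|\mathbf{a}_\eps\cdot\mathbf{b}_\eps|+\lambda|\mathbf{a}_\eps|$ is equi-integrable there, for each of the finitely many levels.
\item If $\mathbf{w}_\eps\to0$ in measure locally, then so does $\mathbf{a}_\eps\cdot\mathbf{w}_\eps$, because $\mathbf{a}_\eps$ is bounded in $L^2$. Vitali's theorem then makes the last term vanish.
\end{itemize}
That $\mathbf{w}_\eps\to0$ in measure is a further ingredient you have not supplied. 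You must represent $\CURL\mathbf{w}_\eps$, whose $(W^{1,\infty}_0)^*$-norm tends to $0$, by $L^1$ densities of small norm. Then use the weak-type $(1,1)$ bound for the Riesz transforms together with $\DIV\mathbf{w}_\eps=0$.

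A smaller point: $z^\lambda_\eps\to0$ uniformly does not follow from Arzel\`a--Ascoli and $z_\eps\to0$ in $L^2$ alone. The reason is that $z^\lambda_\eps\neq z_\eps$ on a set of measure $O(\lambda^{-2})$ rather than $o(1)$. Either use a truncation capped by $\sup_{\Omega\setminus E^\lambda_\eps}|z_\eps|$, which tends to $0$. Or split $z^\lambda_\eps=z_\eps+(z^\lambda_\eps-z_\eps)$ and absorb the part on $E^\lambda_\eps$ into the pigeonhole bound via $\|z^\lambda_\eps\|_{L^\infty}\lesssim\lambda$.
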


\subsection{Existence of solutions in terms of $\pi_{t,x}$}\label{subsect:existence_of_slns_in_lang_of_yms}

The following proposition shows that in order to prove that \eqref{eq:general_cross_diffusion_intro_viscosity} converges to \eqref{eq:general_cross_diffusion_intro_short_velocity}, it is sufficient to prove that the integral $ \int_{\R^2} \lambda_1\, \lambda_2 \diff \pi_{t,x}(\lambda_1,\lambda_2) = 0$.  
\begin{proposition}\label{prop:main_thm_by_Young_measures}
Let $\pi_{t,x}$ be the Young measure of the sequence \eqref{eq:sequence}. Let $u$, $s$, $\overline{\frac{u}{s}\,\partial_x s^{\alpha}}$ and $\overline{\frac{v}{s}\,\partial_x s^{\alpha}}$ be limits as in \ref{conv:sum_strong}, \ref{conv:u_v_weak} and \ref{conv_product_u_derivative_s_alpha_divided_by_s}. Then, for a.e. $(t,x) \in [0,\infty)\times\R$
\begin{equation}\label{eq:equivalence_convergence_in_the_weak_formulation_double_integral_wrt_pi_is_zero}
\overline{\frac{u}{s}\,\partial_x s^{\alpha}} - \frac{u}{s}\,\partial_x s^{\alpha} =  \int_{\R^2} \lambda_1\, \lambda_2 \diff \pi_{t,x}(\lambda_1,\lambda_2).
\end{equation}
In particular, suppose that for a.e. $(t,x) \in [0,\infty)\times\R$
\begin{equation}\label{eq:condition_passing_to_the_limit}
 \int_{\R^2} \lambda_1\, \lambda_2 \diff \pi_{t,x}(\lambda_1,\lambda_2) = 0.
\end{equation}
Then, $\overline{\frac{u}{s}\,\partial_x s^{\alpha}} = \frac{u}{s}\,\partial_x s^{\alpha}$, $\overline{\frac{v}{s}\,\partial_x s^{\alpha}} = \frac{v}{s}\,\partial_x s^{\alpha}$ and $(u,v)$ is a solution to \eqref{eq:general_cross_diffusion_intro_short_velocity} in the sense of Definition \ref{def:weak_sol}. 
\end{proposition}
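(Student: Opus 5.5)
The plan is to expand the product $\frac{\ueps}{\seps}\,\partial_x\seps^{\alpha}$ around the limits and read off each piece through the Young measure representation \eqref{eq:representation_weak_limits_general_sequence_caratheodory_integrand}. Writing $a_\eps=\frac{\ueps}{\seps}-\frac{u}{s}$ and $b_\eps=\partial_x\seps^{\alpha}-\partial_x s^{\alpha}$, we have the exact identity
$$
\frac{\ueps}{\seps}\,\partial_x\seps^{\alpha}-\frac{u}{s}\,\partial_x s^{\alpha} = a_\eps\, b_\eps + \frac{u}{s}\, b_\eps + a_\eps\,\partial_x s^{\alpha}.
$$
The left side converges weakly in $L^2$ to $\overline{\frac{u}{s}\,\partial_x s^{\alpha}}-\frac{u}{s}\,\partial_x s^{\alpha}$ by \ref{conv_product_u_derivative_s_alpha_divided_by_s}. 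The term $\frac{u}{s}\,b_\eps$ tends to zero weakly, because $\frac{u}{s}\le 1$ is a fixed bounded function and $b_\eps\weak 0$ by \ref{conv:gradient_sum}. For $a_\eps\,\partial_x s^{\alpha}$ we split according to $s>0$ and $s=0$. On $\{s=0\}$ we have $\partial_x s^{\alpha}=0$ by \ref{conv:der_s_alpha_zero_in_vacuum}, so the term vanishes there. On $\{s>0\}$ we use \ref{conv:fractions}, which gives $s\,\overline{\frac{u}{s}}=u$, so $\overline{\frac{u}{s}}=\frac{u}{s}$ there; hence $a_\eps\weak 0$ weak$^*$ in $L^\infty$, and multiplying by the fixed $L^2$ function $\partial_x s^{\alpha}$ keeps the weak limit zero. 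It remains to identify the limit of $a_\eps b_\eps$.

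For that term I would apply \eqref{eq:representation_weak_limits_general_sequence_caratheodory_integrand} with $f(\lambda_1,\lambda_2)=\lambda_1\lambda_2$. The required weak compactness in $L^1_{\loc}$ holds because $|a_\eps|\le 2$ and $\{b_\eps\}$ is bounded in $L^2$. Indeed, $\{a_\eps b_\eps\}$ is then bounded in $L^2$, hence equi-integrable on compact sets, and Dunford–Pettis applies. So $a_\eps b_\eps\weak\int\lambda_1\lambda_2\,d\pi_{t,x}$ along a subsequence. Since the limit of the left side is already determined, the identity \eqref{eq:equivalence_convergence_in_the_weak_formulation_double_integral_wrt_pi_is_zero} holds for the whole subsequence and a.e. $(t,x)$.

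Under \eqref{eq:condition_passing_to_the_limit} we get $\overline{\frac{u}{s}\partial_x s^{\alpha}}=\frac{u}{s}\partial_x s^{\alpha}$. The identity $\overline{\frac{u}{s}\partial_x s^{\alpha}}+\overline{\frac{v}{s}\partial_x s^{\alpha}}=\partial_x s^{\alpha}$ from \ref{conv_product_u_derivative_s_alpha_divided_by_s} then gives the corresponding statement for $v$. This uses that $\frac{u}{s}+\frac{v}{s}=1$ on $\{s>0\}$ and that $\partial_x s^{\alpha}=0$ on $\{s=0\}$.

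To conclude that $(u,v)$ is a weak solution, I test \eqref{eq:general_cross_diffusion_intro_viscosity}, written via \eqref{eq:pressure_term_rewritten_derivative_intro}, against $\varphi\in C_c^\infty([0,T)\times\R)$ and pass to the limit term by term.
\begin{itemize}
\item The time-derivative and initial-data terms pass by \ref{conv:u_v_weak}.
\item The pressure term passes by the identification just obtained.
\item The advection term $\ueps\,\partial_x\mathcal{V}^1[\ueps,\veps]$ is weak times strong: $\ueps$ converges weakly and $\partial_x\mathcal{V}^1$ converges strongly in $L^2_{\loc}$ by \ref{conv:velocity_fields}.
\item The viscosity term is bounded by $\eps\|\partial_x\ueps\|_{L^2}=\sqrt\eps\,\|\sqrt\eps\,\partial_x\ueps\|_{L^2}\to0$, using \ref{est:energy_derivatives_u_v_eps_signular}.
\end{itemize}
Finally, $\partial_x s^{\alpha}\in L^2$ by \ref{conv:gradient_sum}.

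The only delicate point, and the one I expect to be the main obstacle, is the vacuum set $\{s=0\}$ in the cross term $a_\eps\,\partial_x s^{\alpha}$. There $\overline{\frac{u}{s}}$ need not equal $\frac{u}{s}=0$, and it is precisely \ref{conv:der_s_alpha_zero_in_vacuum} that makes the term vanish. Apart from this, the argument is routine bookkeeping of weak limits.
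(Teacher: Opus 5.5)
Your proposal is correct and follows essentially the same route as the paper: the same three-term decomposition, with the cross terms vanishing by the weak convergences of $\partial_x \seps^{\alpha}$ and $\ueps/\seps$ (using $\partial_x s^{\alpha}=0$ on $\{s=0\}$ for the vacuum set), and the product term identified through the Young measure representation thanks to its $L^2$ bound. The conclusion for $v$ comes from $\overline{\frac{u}{s}\partial_x s^{\alpha}}+\overline{\frac{v}{s}\partial_x s^{\alpha}}=\partial_x s^{\alpha}$, and your term-by-term limit in the weak formulation just spells out what the paper calls straightforward.
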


\begin{proof}
We write 
$$
\frac{\ueps}{{\seps}} \, \partial_x \seps^{\alpha} - \frac{u}{s} \, \partial_x s^{\alpha} = \left(\frac{\ueps}{{\seps}} - \frac{u}{s} \right)\, \left(\partial_x \seps^{\alpha} -  \partial_x s^{\alpha}\right) +  \frac{u}{s}\, \left(\partial_x \seps^{\alpha} -  \partial_x s^{\alpha}\right) + \partial_x s^{\alpha}\,\left(\frac{\ueps}{{\seps}} - \frac{u}{s} \right).
$$
The last two terms converge in $L^2((0,T)\times\R)$ to 0 by \ref{conv:fractions} and \ref{conv:gradient_sum} (to be fully rigorous, in the second term we only know by \ref{conv:fractions} that $\frac{\ueps}{{\seps}} - \frac{u}{s} \weak 0$ on $\{s >0\}$ but on $\{s=0\}$ we have $\partial_x s^{\alpha} = 0$ by \ref{conv:der_s_alpha_zero_in_vacuum}). Moreover, the sequence $\big\{ \left(\frac{\ueps}{{\seps}} - \frac{u}{s} \right)\, \left(\partial_x \seps^{\alpha} -  \partial_x s^{\alpha}\right)\big\}_{\eps\in(0,1)}$ is weakly compact in $L^1_{\loc}((0,T)\times\R)$ (by the $L^2((0,T)\times\R)$ bound) so applying \eqref{eq:representation_weak_limits_general_sequence_caratheodory_integrand} we obtain
$$
\frac{\ueps}{{\seps}} \, \partial_x \seps^{\alpha} - \frac{u}{s} \, \partial_x s^{\alpha} \weak  \int_{\R^2} \lambda_1\, \lambda_2 \diff \pi_{t,x}(\lambda_1,\lambda_2).
$$
This proves \eqref{eq:equivalence_convergence_in_the_weak_formulation_double_integral_wrt_pi_is_zero}. Next, if \eqref{eq:condition_passing_to_the_limit} is satisfied then by \eqref{eq:equivalence_convergence_in_the_weak_formulation_double_integral_wrt_pi_is_zero}, $\overline{\frac{u}{s}\,\partial_x s^{\alpha}} = \frac{u}{s}\,\partial_x s^{\alpha}$. By \ref{conv_product_u_derivative_s_alpha_divided_by_s}, $\overline{\frac{v}{s}\,\partial_x s^{\alpha}} = \partial_x s^{\alpha}-\overline{\frac{u}{s}\,\partial_x s^{\alpha}}$ and we also have $\frac{v}{s}\,\partial_x s^{\alpha} = \partial_x s^{\alpha}-\frac{u}{s}\,\partial_x s^{\alpha}$ (this is clear when $s>0$ and when $s=0$ we have $\partial_x s^{\alpha}=0$ by \ref{conv:der_s_alpha_zero_in_vacuum}) so that we deduce $\overline{\frac{v}{s}\,\partial_x s^{\alpha}} = \frac{v}{s}\,\partial_x s^{\alpha}$. Now, the fact that $(u,v)$ is a weak solution to \eqref{eq:general_cross_diffusion_intro_short_velocity} is straightforward since $\frac{\ueps}{{\seps}} \, \partial_x \seps^{\alpha}$ and $\frac{\veps}{{\seps}} \, \partial_x \seps^{\alpha}$ are the only nonlinear terms in \eqref{eq:general_cross_diffusion_intro_viscosity}.
\end{proof}

\section{The support of $\pi_{t,x}$ and the proof of Theorem \ref{thm:main}}\label{sect:proof_main_result} 
By Proposition \ref{prop:main_thm_by_Young_measures}, Theorem \ref{thm:main} will be proved if we establish \eqref{eq:condition_passing_to_the_limit}. The main idea is to characterize the support of $\pi_{t,x}$.

\subsection{The support of $\pi_{t,x}$ is (almost always) at most a line}\label{subsect:support_is_a_line} We define
\begin{equation}\label{eq:definition_quantity_F_necessary_later}
\mathcal{F}(t,x) :=  \alpha\,s\,\partial_x (\mathcal{V}^2[u,v] - \mathcal{V}^1[u,v]),
\end{equation}
where $u, v, s$ are limits in \ref{conv:sum_strong}, \ref{conv:u_v_weak} and we will prove the following.
\begin{proposition}\label{prop:support_YM_is_line}
Let $\pi_{t,x}$ be the Young measure of the sequence \eqref{eq:sequence}. Then, 
\begin{itemize}
\item if $s(t,x)=0$ then $\pi_{t,x}$ is supported at most on $\R\times \{0\}$,
\item if $\mathcal{F}(t,x) = 0$ and $s(t,x)>0$ then $\overline{\frac{u}{s}\,\partial_x s^{\alpha}} - \frac{u}{s}\,\partial_x s^{\alpha} = \int_{\R^2} \lambda_1\, \lambda_2 \diff \pi_{t,x}(\lambda_1,\lambda_2)=0$,
\item if $s(t,x)>0$ and $\mathcal{F}(t,x) \neq 0$, the $\supp \pi_{t,x}$ is contained in $\{0\}\times\R$ or in the set $\{(\lambda_1, \mathcal{F}\,\lambda_1): \lambda_1\in\R\}$.
\end{itemize}

\end{proposition}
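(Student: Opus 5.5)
The plan is to derive two relations between the moments of $\pi_{t,x}$, following the sketch in Section~\ref{sect:sketch_of_the_proof}, and then use the equality case of Cauchy--Schwarz.

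\emph{Case $s(t,x)=0$.} By \ref{conv:der_s_alpha_zero_in_vacuum}, $|\partial_x\seps^{\alpha}-\partial_x s^{\alpha}|\mathds{1}_{s=0}\to0$ in $L^1(0,T;L^1_{\loc}(\R))$. Testing the Young measure representation \eqref{eq:representation_weak_limits_general_sequence_caratheodory_integrand} with $f=\mathds{1}_{s=0}\min(|\lambda_2|,1)$ gives $\int\min(|\lambda_2|,1)\diff\pi_{t,x}=0$ a.e. on $\{s=0\}$. Hence $\pi_{t,x}$ is supported on $\R\times\{0\}$. From now on we restrict to $\{s>0\}$.

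\emph{First relation (Div-Curl).} Apply Theorem~\ref{thm:div_curl_lemma} to the two equations of \eqref{eq:general_cross_diffusion_intro_viscosity}, written as $\partial_t\ueps=\partial_x F_\eps+R_\eps$ and $\partial_t\seps=\partial_x G_\eps+S_\eps$. Here
\[
F_\eps=\tfrac1\alpha\tfrac{\ueps}{\seps}\partial_x\seps^\alpha+\ueps\partial_x\mathcal V^1[\ueps,\veps], \qquad G_\eps=\tfrac1\alpha\partial_x\seps^\alpha+\ueps\partial_x\mathcal V^1+\veps\partial_x\mathcal V^2,
\]
and the viscous terms $\eps\partial_x^2\ueps$, $\eps\partial_x^2\seps$ go into $R_\eps$, $S_\eps$. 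These remainders are compact in $H^{-1}_{\loc}$ by \ref{est:energy_derivatives_u_v_eps_signular}. The lemma yields $\overline{F\,s}-\overline{u\,G}=\overline F\,s-u\,\overline G$. Since $\seps\to s$ strongly by \ref{conv:sum_strong}, the left side involving $s$ factorizes. The content is therefore $\overline{u\,G}=u\,\overline G$. Using \ref{conv:velocity_fields} to freeze the velocity fields and writing $\veps=\seps-\ueps$, this reads
\[
\tfrac1\alpha\big(\overline{u\,\partial_x s^\alpha}-u\,\partial_x s^\alpha\big)=(\overline{u^2}-u^2)\,\partial_x(\mathcal V^2-\mathcal V^1).
\]
Dividing by $s$ and using $\ueps=\seps\frac{\ueps}{\seps}$ together with the strong convergence of $\seps$, the left side becomes $\frac s\alpha\int\lambda_1\lambda_2\diff\pi$ by Proposition~\ref{prop:main_thm_by_Young_measures}. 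The right side becomes $s^2\int\lambda_1^2\diff\pi\,\partial_x(\mathcal V^2-\mathcal V^1)$. This gives
\[
\int\lambda_1\lambda_2\diff\pi_{t,x}=\mathcal F\int\lambda_1^2\diff\pi_{t,x}.
\]
The factor of $s$ must be tracked carefully so that exactly $\mathcal F$ from \eqref{eq:definition_quantity_F_necessary_later} appears.

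\emph{Second relation (defect measure).} Proposition~\ref{prop:weak_compact_partial_x_seps} says that the weak limit of $|\partial_x\seps^\alpha-\partial_x s^\alpha|^2$ equals $\alpha(\overline{u\partial_x s^\alpha}-u\partial_x s^\alpha)\partial_x(\mathcal V^2-\mathcal V^1)=\mathcal F\int\lambda_1\lambda_2\diff\pi$. The sequence is only bounded in $L^1$, so Lemma~\ref{lem:bound_by_the_YM_from_below_only_L1_estimate} applied with $f=\lambda_2^2$ gives the inequality
\[
\int\lambda_2^2\diff\pi\le\mathcal F\int\lambda_1\lambda_2\diff\pi.
\]
Concentration parts of the measure are nonnegative, so they only help the inequality.

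\emph{Conclusion.} If $\mathcal F=0$, the second relation forces $\int\lambda_2^2\diff\pi=0$, hence $\int\lambda_1\lambda_2\diff\pi=0$. If $\mathcal F\ne0$, combining the two relations gives
\[
\int\lambda_2^2\int\lambda_1^2\le\mathcal F^2\Big(\int\lambda_1^2\Big)^2=\Big(\int\lambda_1\lambda_2\Big)^2,
\]
so Cauchy--Schwarz holds with equality. Then either $\int\lambda_1^2=0$, which gives support in $\{0\}\times\R$, or $\lambda_2=c\lambda_1$ $\pi$-a.e. Plugging this into the first relation gives $c=\mathcal F$.

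The main obstacle is the first relation. Two points need care. First, the hypotheses of Theorem~\ref{thm:div_curl_lemma} must be checked locally in $L^2$. Second, the weak limits have to be converted into Young-measure moments of the \emph{centered} variables $\lambda_1=\frac{\ueps}{\seps}-\frac us$. For the latter I would write $\overline{u^2}-u^2=s^2(\overline{(u/s)^2}-(u/s)^2)$ via \ref{conv:fractions}, and $\overline{(u/s)^2}-(u/s)^2=\int\lambda_1^2\diff\pi$ because $\frac{\ueps}{\seps}-\frac us\weak0$ on $\{s>0\}$.
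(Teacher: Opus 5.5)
Your proposal is correct and follows the paper's proof essentially step by step. The steps are the vacuum case via \ref{conv:der_s_alpha_zero_in_vacuum}, the relation $\int\lambda_1\lambda_2\diff\pi_{t,x}=\mathcal F\int\lambda_1^2\diff\pi_{t,x}$ from the Div-Curl lemma, the inequality $\int\lambda_2^2\diff\pi_{t,x}\le\mathcal F\int\lambda_1\lambda_2\diff\pi_{t,x}$ from Proposition~\ref{prop:weak_compact_partial_x_seps} and Lemma~\ref{lem:bound_by_the_YM_from_below_only_L1_estimate}, and the equality case of Cauchy--Schwarz. The only cosmetic difference is that you pair the $\ueps$-equation with the equation for $\seps$ instead of the one for $\veps$; this gives exactly the paper's identity, because the extra field $(F_\eps,\ueps)$ has zero inner product with $\mathbf a_\eps=(\ueps,-F_\eps)$, and likewise for the weak limits.
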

Note that only the last scenario (when $\supp \pi_{t,x}$ is contained in $\{(\lambda_1, \mathcal{F}\,\lambda_1)\}$) does not lead directly to \eqref{eq:condition_passing_to_the_limit} and needs further investigation in Section \ref{subsect:the_case_that_YM_supp_on_a_nonzero_line}. 
\begin{proof}[Proof of Proposition \ref{prop:support_YM_is_line}] We split the proof into three steps.\\

\underline{Step 1. The case $s(t,x)=0$.} We apply \eqref{eq:representation_weak_limits_general_sequence_caratheodory_integrand} with $f(t,x,\lambda_1,\lambda_2) = \mathds{1}_{s(t,x)=0}\,|\lambda_2|$ which is allowed since by \ref{conv:der_s_alpha_zero_in_vacuum}, $\mathds{1}_{s=0}\, |\partial_x \seps^{\alpha}-\partial_x s^{\alpha}| \to 0$ strongly in $L^1((0,T)~\times~\R)$. Hence,
$$
|\partial_x \seps^{\alpha} - \partial_x s^{\alpha}| \, \mathds{1}_{s(t,x)=0} \weak  \int_{\R^2} |\lambda_2|\, \mathds{1}_{s(t,x)=0} \diff \pi_{t,x}(\lambda_1,\lambda_2) =0. 
$$
We conclude that for $(t,x)$ such that $s(t,x)=0$, $\pi_{t,x}$ is supported at most on $\R\times \{0\}$.

\underline{Step 2. The case $s(t,x)>0$ and $\mathcal{F}(t,x) = 0$.} In addition to considering the case, we will also prove that for a.e. $(t,x)$ such that $s(t,x)>0$ we have
\begin{equation}\label{eq:final_identity_from_1st_application_div_curl_lemma}
\left(\frac{u}{s}\, \partial_x s^{\alpha} - \overline{\frac{u}{s}\,\partial_x s^{\alpha}} \right) = \mathcal{F}\, \left(\frac{u^2}{s^2} - \overline{\,\frac{u^2}{s^2}\,}\right),
\end{equation}
where $\overline{\,\frac{u^2}{s^2}\,}$ is the weak limit from \ref{conv:fractions}. To this end, we want to apply Theorem~\ref{thm:div_curl_lemma} locally on the space $(0,T)\times\R \subset \R^2$ (locally means that we apply it on every compact set which is sufficient to identify the limits) with variables $(t,x)$ and with vector fields
$$
{\bf a_\eps} = \left(\ueps, -\frac{1}{\alpha}\,\frac{\ueps}{\seps}\,\partial_x \seps^{\alpha} -\ueps \, \partial_x \mathcal{V}^1[\ueps,\veps] \right),\qquad
{\bf b_\eps} = \left(\frac{1}{\alpha}\,\frac{\veps}{\seps}\,\partial_x \seps^{\alpha} +\veps \, \partial_x \mathcal{V}^2[\ueps,\veps], \veps \right). 
$$
We have
$$
\DIV {\bf a_\eps} = \eps\, \partial^2_x \ueps, \qquad \qquad \CURL {\bf b_\eps} = -\eps\, \partial^2_x \veps, 
$$
and both of them are compact in $H^{-1}((0,T)\times\R)$ because by \ref{est:energy_derivatives_u_v_eps_signular}, $\eps\,\partial_x\ueps, \eps\,\partial_x \veps\to 0$ strongly in $L^2((0,T)\times\R)$. According to \ref{conv:u_v_weak}, \ref{conv:velocity_fields} and \ref{conv_product_u_derivative_s_alpha_divided_by_s}, the vector fields converge weakly in $L^2_{\loc}((0,T)\times\R)$ to 
$$
{\bf a} = \left(u, -\frac{1}{\alpha}\,\overline{\frac{u}{s}\,\partial_x s^{\alpha}} -u \, \partial_x \mathcal{V}^1[u,v] \right), \qquad \qquad  {\bf b} = \left(\frac{1}{\alpha}\,\overline{\frac{v}{s}\,\partial_x s^{\alpha}} + v \, \partial_x \mathcal{V}^2[u,v], v \right).
$$
Using identities $v = s-u$ and $\overline{\frac{u}{s}\,\partial_x s^{\alpha}}+ \overline{\frac{v}{s}\,\partial_x s^{\alpha}} = \partial_x s^{\alpha}$ (see \ref{conv_product_u_derivative_s_alpha_divided_by_s} in Proposition \ref{prop:uniform_estimates_viscosity}), we can compute 
\begin{align*}
{\bf a} \cdot {\bf b} =\,& u\, \left(\frac{1}{\alpha}\,\overline{\frac{v}{s}\,\partial_x s^{\alpha}} + v \, \partial_x \mathcal{V}^2[u,v]\right) - \left(\frac{1}{\alpha}\,\overline{\frac{u}{s}\,\partial_x s^{\alpha}} +u \, \partial_x \mathcal{V}^1[u,v] \right) \,v\\
=\,& \frac{1}{\alpha}\,\left(u\, \partial_x s^{\alpha} - u\, \overline{\frac{u}{s}\,\partial_x s^{\alpha}} - \overline{\frac{u}{s}\,\partial_x s^{\alpha}} \,(s-u) \right)  + u\,v \, \partial_x(\mathcal{V}^2[u,v]- \mathcal{V}^1[u,v])\\
=\,& \frac{1}{\alpha}\,\left(u\, \partial_x s^{\alpha} - \overline{\frac{u}{s}\,\partial_x s^{\alpha}} \,s \right)  + u\,v \, \partial_x(\mathcal{V}^2[u,v]- \mathcal{V}^1[u,v]). 
\end{align*}
Moreover, by \ref{conv:velocity_fields} and \ref{conv:square_product_uv} in Proposition \ref{prop:uniform_estimates_viscosity}
$$
{\bf a_\eps}\cdot {\bf b_\eps} = \ueps \, \veps \, \partial_x (\mathcal{V}^2[\ueps,\veps] - \mathcal{V}^1[\ueps,\veps]) \weak \overline{u\,v} \, \partial_x (\mathcal{V}^2[u,v] - \mathcal{V}^1[u,v]).
$$
Hence, the Div-Curl Lemma (Theorem \ref{thm:div_curl_lemma}) implies
$$
\frac{1}{\alpha}\,\left(u\, \partial_x s^{\alpha} - \overline{\frac{u}{s}\,\partial_x s^{\alpha}} \,s \right) = (\overline{u\,v} - u\,v) \, \partial_x (\mathcal{V}^2[u,v] - \mathcal{V}^1[u,v]).
$$
Finally, we observe that by \ref{conv:square_product_uv} we know that the sequence $\{\ueps^2\}$ converges weakly to some $\overline{u^2}$. Writing $v=s-u$ and using the strong convergence of $\{\seps\}$, we have $\overline{u\,v} - u\,v = u^2 - \overline{u^2}$ and we can simplify
$$
\frac{1}{\alpha}\,\left(u\, \partial_x s^{\alpha} - \overline{\frac{u}{s}\,\partial_x s^{\alpha}} \,s \right) = (u^2 - \overline{u^2}) \, \partial_x (\mathcal{V}^2[u,v] - \mathcal{V}^1[u,v]).
$$
By Step 1, we consider this pointwise identity at $(t,x)$ such that $s(t,x)>0$. Dividing by $s$ and multiplying by $\alpha$ we obtain \eqref{eq:final_identity_from_1st_application_div_curl_lemma} (we also used here that for $(t,x)$ such that $s(t,x)>0$, we have $\overline{u^2/s^2} = \frac{1}{s^2}\,\overline{u^2}$, cf. \ref{conv:fractions}). We now observe that when $\mathcal{F} = 0$, $\frac{u}{s}\, \partial_x s^{\alpha} = \overline{\frac{u}{s}\,\partial_x s^{\alpha}}$ so we are done by \eqref{eq:equivalence_convergence_in_the_weak_formulation_double_integral_wrt_pi_is_zero}.\\

\underline{Step 3. The case $s(t,x)>0$ and $\mathcal{F} \neq 0$.} We note that identity \eqref{eq:final_identity_from_1st_application_div_curl_lemma} is valid for a.e. $(t,x)$ such that $s(t,x)>0$ (we did not use $\mathcal{F} = 0$ in its derivation). We now observe that
\begin{equation}\label{eq:difference_weak_limit_fraction_squared_in_terms_of_YM}
\overline{\,\frac{u^2}{s^2}\,} - \frac{u^2}{s^2} =  \int_{\R^2} \lambda_1^2 \diff \pi_{t,x}(\lambda_1, \lambda_2).
\end{equation}
Indeed, we write
$$
\frac{\ueps^2}{\seps^2} - \frac{u^2}{s^2} = \left(\frac{\ueps}{\seps}-\frac{u}{s}\right)^2 +2 \left(\frac{\ueps}{\seps}-\frac{u}{s}\right)\, \frac{u}{s}.
$$
The last term on the (RHS) converges to 0 by \ref{conv:fractions} (again, by \ref{conv:fractions} we know the weak limit of $\frac{\ueps}{\seps}$ only for $s>0$ but for $s=0$ we can use that $\frac{u}{s}=0$). Hence, by \eqref{eq:representation_weak_limits_general_sequence_caratheodory_integrand} we obtain~\eqref{eq:difference_weak_limit_fraction_squared_in_terms_of_YM}.\\

Now, we combine \eqref{eq:equivalence_convergence_in_the_weak_formulation_double_integral_wrt_pi_is_zero}, \eqref{eq:final_identity_from_1st_application_div_curl_lemma} and \eqref{eq:difference_weak_limit_fraction_squared_in_terms_of_YM} to obtain
\begin{equation}\label{eq:equality_between_2_YM_integrals_lambda1_lambda12}
\int_{\R^2} \lambda_1\,\lambda_2 \diff \pi_{t,x}(\lambda_1, \lambda_2)= \mathcal{F}\,\int_{\R^2} \lambda_1^2 \diff \pi_{t,x}(\lambda_1, \lambda_2).
\end{equation}
Moreover, from Proposition \ref{prop:weak_compact_partial_x_seps} and Lemma \ref{lem:bound_by_the_YM_from_below_only_L1_estimate} we obtain
$$
\int_{\R^2} \lambda_2^2 \diff \pi_{t,x}(\lambda_1, \lambda_2) \leq \alpha\,(\overline{u\, \partial_x s^{\alpha}} - u\, \partial_x s^{\alpha})\,  \partial_x (\mathcal{V}^2[u,v] - \mathcal{V}^1[u,v]).
$$
By \ref{conv_product_u_derivative_s_alpha_divided_by_s} in Proposition \ref{prop:uniform_estimates_viscosity}, we can write $\overline{u\, \partial_x s^{\alpha}} - u\, \partial_x s^{\alpha} = s\,(\overline{\frac{u}{s}\, \partial_x s^{\alpha}} - \frac{u}{s}\, \partial_x s^{\alpha})$ for $s>0$ so that using \eqref{eq:equivalence_convergence_in_the_weak_formulation_double_integral_wrt_pi_is_zero} and the definition of $\mathcal{F}$ in \eqref{eq:definition_quantity_F_necessary_later} we deduce
\begin{equation}\label{eq:inequality_between_2_YM_integrals_lambda2_lambda12}
\int_{\R^2} \lambda_2^2 \diff \pi_{t,x}(\lambda_1, \lambda_2) \leq \mathcal{F}\, \int_{\R^2} \lambda_1\,\lambda_2 \diff \pi_{t,x}(\lambda_1, \lambda_2).
\end{equation}
Now, \eqref{eq:equality_between_2_YM_integrals_lambda1_lambda12} and \eqref{eq:inequality_between_2_YM_integrals_lambda2_lambda12} imply
$$
\int_{\R^2} \lambda_1^2 \diff \pi_{t,x}(\lambda_1, \lambda_2)  \, \int_{\R^2} \lambda_2^2 \diff \pi_{t,x}(\lambda_1, \lambda_2) \leq  \left|\int_{\R^2} \lambda_1\,\lambda_2 \diff \pi_{t,x}(\lambda_1, \lambda_2)\right|^2
$$
and so, by the Cauchy-Schwarz inequality we finally arrive at 
\begin{equation}\label{eq:equality_Cauchy_Schwarz_between_YM_FINAL}
\left|\int_{\R^2} \lambda_1\,\lambda_2 \diff \pi_{t,x}(\lambda_1, \lambda_2)\right|^2 = \int_{\R^2} \lambda_1^2 \diff \pi_{t,x}(\lambda_1, \lambda_2)  \,\int_{\R^2} \lambda_2^2 \diff \pi_{t,x}(\lambda_1, \lambda_2).  
\end{equation}
It is well-known that the equality in the Cauchy-Schwarz inequality occurs only in some very particular cases which we will exploit. More precisely, we will prove that either $\int_{\R^2} \lambda_1^2 \diff \pi_{t,x}(\lambda_1, \lambda_2)= 0$ (which clearly implies that $\supp \pi_{t,x} \subset \{0\}\times\R$) or $\lambda_2 = \mathcal{F}\, \lambda_1$ on $\supp \pi_{t,x}$. Indeed, if $\int_{\R^2} \lambda_1^2 \diff \pi_{t,x}(\lambda_1, \lambda_2)>0$
\begin{align*}
\int_{\R^2} |\lambda_2 - \mathcal{F}\,\lambda_1|^2 &\diff \pi_{t,x}(\lambda_1, \lambda_2) =\\
&= \int_{\R^2} \lambda_2^2\diff \pi_{t,x}(\lambda_1, \lambda_2) + \mathcal{F}^2 \int_{\R^2}  \,\lambda_1^2\diff \pi_{t,x}(\lambda_1, \lambda_2)  - 2\,\mathcal{F} \int \lambda_1\,\lambda_2  \diff \pi_{t,x}(\lambda_1, \lambda_2)\\
&= \int_{\R^2} \lambda_2^2\diff \pi_{t,x}(\lambda_1, \lambda_2) - \mathcal{F}\, \int \lambda_1\,\lambda_2  \diff \pi_{t,x}(\lambda_1, \lambda_2)\\
&= \int_{\R^2} \lambda_2^2\diff \pi_{t,x}(\lambda_1, \lambda_2) - \frac{\left|\int_{\R^2} \lambda_1\,\lambda_2 \diff \pi_{t,x}(\lambda_1, \lambda_2)\right|^2}{\int_{\R^2} \lambda_1^2\diff \pi_{t,x}(\lambda_1, \lambda_2)} = 0,
\end{align*}
where we used \eqref{eq:equality_between_2_YM_integrals_lambda1_lambda12} twice and the equality \eqref{eq:equality_Cauchy_Schwarz_between_YM_FINAL}. The proof is concluded.
\end{proof}

\subsection{The case of $\pi_{t,x}$ supported on $\{(\lambda_1, \mathcal{F}\,\lambda_1): \lambda_1\in\R\}$ and proof of Theorem \ref{thm:main}.}\label{subsect:the_case_that_YM_supp_on_a_nonzero_line} We now consider the possibility that $\pi_{t,x}$ is supported on the line $\{(\lambda_1, \mathcal{F}\,\lambda_1): \lambda_1\in\R\}$ and show that the support is in fact $(0,0)$. 

\begin{proposition}\label{prop:the_case_of_support_on_the_line_identification_of_YM}
Let $\pi_{t,x}$ be the Young measure of the sequence \eqref{eq:sequence} and suppose that $(t,x)$ is such that $\pi_{t,x}$ is supported on the line $\{(\lambda_1, \mathcal{F}\,\lambda_1): \lambda_1\in\R\}$ with $\mathcal{F}$ defined by \eqref{eq:definition_quantity_F_necessary_later}, $\mathcal{F}\neq 0$ and $s(t,x)>0$. Then, $\int_{\R^2} \lambda_1^2 \diff \pi_{t,x}(\lambda_1, \lambda_2) =0$ so that $\pi_{t,x} = \delta_{(0,0)}$. 
\end{proposition}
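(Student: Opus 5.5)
We use the conservation law of Proposition~\ref{prop:new_cons_law_log_general_alpha} together with the continuity equation for $\ueps$ or $\veps$ in a second compensated-compactness argument. Because $\pi_{t,x}$ lives on the line $\lambda_2=\mathcal{F}\lambda_1$, every weak limit involving $\partial_x\seps^{\alpha}$ becomes a weak limit of a function of $\frac{\ueps}{\seps}$ alone. We then show that the resulting identity forces $\int\lambda_1^2\diff\pi_{t,x}=0$.

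\textbf{Step 1: the div-curl identity.} On a bounded Lipschitz set we apply Theorem~\ref{thm:div_curl_lemma_version_CR} to
\[
{\bf a_\eps}=\Bigl(\ueps,\,-\tfrac{1}{\alpha}\tfrac{\ueps}{\seps}\partial_x\seps^{\alpha}-\ueps\partial_x\mathcal{V}^1[\ueps,\veps]\Bigr),\qquad
{\bf b_\eps}=\bigl(\Phi_\eps,\,E_\eps\bigr),
\]
where $\Phi_\eps$ is the flux in \eqref{eq:PDE_cons_law_entropy_eps}. We have $\CURL {\bf b_\eps}=g_\eps+h_\eps$. Since $g_\eps$ is bounded in $L^1$ and $h_\eps\to0$ in $L^2(0,T;H^{-1})$, this divergence is compact in $(W^{1,\infty}_0)^*$; this uses the embedding of measures into $(W^{1,\infty}_0)^*$, which is compact in one dimension. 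Uniform integrability of ${\bf a_\eps}\cdot{\bf b_\eps}$ requires care. The terms $(E_\eps+\seps)\partial_x p(\seps)\,\ueps$ and $E_\eps\,\frac{\ueps}{\seps}\partial_x\seps^\alpha$ cancel in their worst part, because $\ueps\,\partial_x p(\seps)=\frac1\alpha\frac{\ueps}{\seps}\partial_x\seps^\alpha$. What remains is $\frac1\alpha\frac{\ueps}{\seps}\partial_x\seps^{\alpha}\cdot\seps$, which is controlled by \ref{est:conservation} and \ref{est:crucial_estimate_gradient_power_alpha}, together with bounded advection terms. Concretely, with $w_\eps=\ueps/\seps$,
\[
{\bf a_\eps}\cdot{\bf b_\eps}=\tfrac1\alpha\,\ueps\,\partial_x\seps^\alpha+\bigl(\text{terms }\ueps\veps\log\veps\,\partial_x(\mathcal{V}^2-\mathcal{V}^1)\bigr),
\]
and this is uniformly integrable except for the weakly-compact $L^2$-product $\ueps\partial_x\seps^\alpha$. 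Here $L^2$ weak compactness is not uniform integrability, so I would truncate or split it off using the fact that it is the product of an $L^\infty$ term with an $L^2$ term. The limit identity then reads, at points with $s>0$,
\[
\tfrac1\alpha\bigl(\overline{u\partial_x s^\alpha}-u\,\overline{\tfrac{u}{s}\partial_xs^\alpha}\,\cdot\ldots\bigr)+\partial_x(\mathcal V^2-\mathcal V^1)\bigl(\overline{u\,v\log v}-u\,\overline{v\log v}\bigr)=\ldots,
\]
and in this expression only $\overline{u\,\partial_x p(s)\log u}$-type limits couple to the gradient.

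\textbf{Step 2: eliminating $\lambda_2$.} We write $\ueps=\seps w_\eps$ and $\veps=\seps(1-w_\eps)$, and use the strong convergence of $\seps$. Every term then has the form $\overline{G(w)}$ or $\overline{G(w)\partial_x\seps^\alpha}$ with $G$ continuous, built from $w\log(sw)$ and $(1-w)\log(s(1-w))$. Set $\lambda_2=\mathcal{F}\lambda_1$ and $w=\frac us+\lambda_1$. The representation~\eqref{eq:representation_weak_limits_general_sequence_caratheodory_integrand} then turns the identity into
\[
\int_{\R^2} Q_{t,x}(\lambda_1)\diff\pi_{t,x}=0,
\]
where $Q$ is an explicit scalar function.

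\textbf{Step 3: the sign of $Q$ (main obstacle).} We need $Q(\lambda_1)$ to be nonpositive, or nonnegative, with $Q(\lambda_1)\gtrsim\lambda_1^2$, uniformly for $w\in[0,1]$. Then $\int\lambda_1^2\diff\pi_{t,x}=0$, and the statement follows. Since $\mathcal{F}=\alpha s\,\partial_x(\mathcal V^2-\mathcal V^1)$, I expect the factor $\partial_x(\mathcal V^2-\mathcal V^1)$ to factor out entirely, leaving something like
\[
Q=\mathcal{F}\,\lambda_1\bigl(\log\tfrac{w}{1-w}-\log\tfrac{\bar w}{1-\bar w}\bigr)-c\,\lambda_1^2,
\]
where $\bar w=\frac us$. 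The first term is a monotone increment, so it is nonnegative. The real difficulty is that the $\mathcal{F}$ terms from the flux and from the advection may come with opposite signs. We must also subtract the affine parts, which vanish by weak convergence, using the $\int\lambda_1\diff\pi=0$ identity. I would first try to organise $Q$ as $\lambda_1\,(\Psi(\bar w+\lambda_1)-\Psi(\bar w))$ with $\Psi$ strictly monotone. Failing that, I would fall back on convexity of $w\log w+(1-w)\log(1-w)$ and a Jensen-type inequality for $\pi_{t,x}$.
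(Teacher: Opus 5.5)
\textbf{There is a genuine gap: the proposal stops at the step where the proof actually happens.} Your Step 1 is essentially the paper's, with the same pair $\mathbf{a}_\eps$, $\mathbf{b}_\eps$. You also correctly see that the line support lets you replace $\partial_x\seps^{\alpha}-\partial_x s^{\alpha}$ by $\mathcal{F}\bigl(\frac{\ueps}{\seps}-\frac{u}{s}\bigr)$ inside weak limits. But the structure you predict for Steps 2--3 is not the actual one.

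The div--curl identity is not linear in the Young measure: $\mathbf{a}\cdot\mathbf{b}$ contains products of weak limits such as $\overline{u\,\partial_x s^{\alpha}}\,\overline{E}$. To eliminate the gradient, multiply by $s$ and use the line support with $f=E$ and with $f=u$. Every term then carries the factor $\mathcal{F}$, which is divided out, leaving
\[
(\overline{E}+s)(\overline{u^2}-u^2)+s\bigl(\overline{u\,v\log v}-u\,\overline{v\log v}\bigr)-u\bigl(\overline{u\,E}-u\,\overline{E}\bigr)=0 .
\]
Take your variables: $\bar w=\frac{u}{s}$, $\nu$ the law of $w=\bar w+\lambda_1$ under $\pi_{t,x}$, and $H(w)=w\log w+(1-w)\log(1-w)$. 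After dividing by $s^3$ the identity becomes
\[
\Bigl(1+\int H\diff\nu\Bigr)\int(w-\bar w)^2\diff\nu+\int(w-\bar w)\bigl[(1-\bar w)(1-w)\log(1-w)-\bar w\,w\log w\bigr]\diff\nu=0 .
\]
So the identity is not of the form $\int Q\diff\pi_{t,x}=0$ for a scalar $Q$, because the first term is a product of two integrals. There is also no $\mathcal{F}$-weighted increment of $\log\frac{w}{1-w}$ and no competing $-c\lambda_1^2$ term. Your Step 3 as formulated cannot be carried out, and nothing in the proposal replaces it.

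\textbf{What is missing is the argument that extracts a sign from this identity.} The paper writes $\overline{E}=E+(\overline{E}-E)$ and splits the left-hand side as $X+Y$.
\begin{itemize}
\item $X=(\overline{E}-E)(\overline{u^2}-u^2)\ge0$ by Jensen, using convexity of $\lambda\log\lambda$ and $\lambda^2$. This is where your fallback convexity idea enters, but it handles only the product term.
\item The remainder $Y$ is linear in the Young measure, and proving $Y\ge0$ is the real work. One shows $Y_\eps\ge\widetilde{Z}_\eps$ pointwise with $\widetilde{Z}_\eps\weak0$. This uses the elementary inequality $2xy(x-y)\log\frac{x}{y}\le(x-y)^2(x+y)$ for $(x,y)=(\ueps,u)$ and $(x,y)=(\veps,v)$, together with $(\veps-v)^2=(\ueps-u)^2+(\seps-s)(\veps-v+u-\ueps)$. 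The coefficients of $(\ueps-u)^2$ then cancel up to a multiple of $\seps-s$.
\end{itemize}
So the conclusion does not come from a coercive bound $Q\ge c\lambda_1^2$, but from $X+Y=0$ with $X,Y\ge0$. Hence $X=0$, so either $\overline{u^2}=u^2$ or $\overline{E}=E$. In the second case, strict convexity forces the Young measure of $\{\ueps\}$ to be $\delta_u$, so again $\overline{u^2}=u^2$. Therefore $\int\lambda_1^2\diff\pi_{t,x}=s^{-2}(\overline{u^2}-u^2)=0$.

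Two smaller remarks. No truncation is needed for uniform integrability: $\{\mathbf{a}_\eps\cdot\mathbf{b}_\eps\}$ is bounded in $L^2(\Omega)$, and $\int_A|f|\le|A|^{1/2}\|f\|_{L^2(\Omega)}$. Also, the compactness of $L^1\hookrightarrow(W^{1,\infty}_0(\Omega))^*$ is not a one-dimensional fact; here $\Omega$ is a space--time domain.
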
 
We will need two technical lemmas.
\begin{lem}\label{lem:upper_bound_on_the_x-y_log_xy}
Let $x,y \geq 0$. Then, ${2\, x\,y}\,(x-y)\, \log\left(\frac{x}{y}\right) \leq {(x-y)^2\,(x+y)}$.
\end{lem}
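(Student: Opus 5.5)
Since both sides of the inequality are symmetric in $x$ and $y$ and vanish when $x=y$ or when $xy=0$ (the convention $0\cdot\log$ being $0$), we may assume $x>y>0$. The factor $x-y$ is then positive and can be divided out. The claim reduces to
$$
2\,x\,y\,\log\left(\frac{x}{y}\right) \leq (x-y)\,(x+y) = x^2-y^2 .
$$
By homogeneity, dividing by $y^2$ and setting $t = x/y>1$, this becomes the scalar inequality
$$
2\,t\,\log t \leq t^2-1, \qquad t\geq 1 .
$$
This is equivalent to $\log t \leq \frac{1}{2}\left(t-\frac{1}{t}\right)$.

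To prove it, define $\phi(t) = \frac{1}{2}\left(t - \frac{1}{t}\right) - \log t$. Then $\phi(1)=0$. Its derivative is
$$
\phi'(t) = \frac{1}{2} + \frac{1}{2t^2} - \frac{1}{t} = \frac{(t-1)^2}{2t^2} \geq 0 ,
$$
so $\phi$ is nondecreasing and therefore $\phi(t)\geq 0$ for all $t\geq 1$. Multiplying back by $2t\,y^2\,(x-y)>0$ gives the claim.

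For the case $x<y$, we either note that swapping $x$ and $y$ leaves both sides unchanged, or repeat the argument with $t\in(0,1)$. In that range $\phi\le 0$ by the same monotonicity, and the factor $x-y<0$ flips the inequality back in the right direction.

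There is no real obstacle here. The only care needed is the degenerate case $x=0$ or $y=0$. When $x\ne y$ the left-hand side involves $0\cdot\log 0$ or $\log\infty$ multiplied by $0$, and under the natural convention $xy\log(x/y)=0$ (the limit as one variable tends to $0$) the inequality reduces to $0\leq (x-y)^2(x+y)$, which is trivially true.
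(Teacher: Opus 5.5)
Your proof is correct and follows essentially the paper's route: reduce by symmetry to $x>y>0$, substitute $t=x/y$ to get $2t\log t\le t^2-1$, and use monotonicity of an auxiliary function vanishing at $t=1$. The only difference is cosmetic: dividing by $2t$ first makes your $\phi'(t)=(t-1)^2/(2t^2)$ manifestly nonnegative, whereas the paper's $f'(t)=2t-2-2\log t\ge 0$ tacitly uses $\log t\le t-1$. One harmless slip: the right-hand side does not vanish when $xy=0$ (it equals $y^3$ if $x=0$), but your last paragraph treats that case correctly anyway.
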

\begin{proof}
If $x=0$ or $y=0$, the inequality is trivial so we assume $x,y>0$. Moreover, both sides do not change when we swap $x$ and $y$ so we may assume that $x\geq y$. We let $t =\frac{x}{y}$ so dividing the inequality by $y^3$, it is equivalent to
$$
2\,t\,(t-1)\log t \leq {(t-1)^2\,(t+1)}
$$
for $t \geq 1$. If $t = 1$, the inequality is satisfied so we divide by $(t-1)$ and we have to prove $2t \log t \leq (t-1)\,(t+1) = t^2-1$. Let $f(t) = t^2-1-2\,t \log t$. We have $f(1) = 0$ and $f'(t) =2t - 2 - 2\log t$ so for $t\geq 1$ we have $f'(t)\geq 0$ as desired.
\end{proof}
\begin{lem}\label{lem:general_identity_to_replace_partialx_ps_with_just_u} Let $f:[0,\infty)^2 \to \R$ be a continuous function, $\{\ueps\}$, $\{\veps\}$ be the sequences of solutions to \eqref{eq:general_cross_diffusion_intro_viscosity} under Assumption \ref{ass:velocity_kernels_main}. Suppose that $f(\ueps, \veps) \weak \overline{f(u,v)}$, $f(\ueps, \veps)\, \partial_x \seps^{\alpha} \weak \overline{f(u,v)\, \partial_x s^{\alpha}}$, $\ueps\, f(\ueps, \veps) \weak \overline{u\, f(u,v)}$, $f(\ueps, \veps)\, \frac{\ueps}{\seps} \weak \overline{f(u,v)\, \frac{u}{s}}$ weakly in $L^1_{\loc}((0,T)\times\R)$. Let $(t,x)$ be a point as in Proposition~\ref{prop:the_case_of_support_on_the_line_identification_of_YM} such that $s(t,x)>0$, $\mathcal{F}\neq 0$, $\lambda_2 = \mathcal{F}\, \lambda_1$ for $\pi_{t,x}$-a.e. $(\lambda_1, \lambda_2)$. Then, at $(t,x)$, we have  
\begin{equation}\label{eq:limits_relationship_supp_restricted_line_before_logarithmic_analysis_lemma}
s\, \left(\overline{f(u,v) \, \partial_x s^{\alpha}} - \overline{ f\left(u,v\right)}\, \partial_x s^{\alpha} \right)= \mathcal{F} \, (\overline{ f(u, v) \, u} - \overline{f(u,v)}\, u).
\end{equation}
\end{lem}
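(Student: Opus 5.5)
The plan is to express every weak limit in \eqref{eq:limits_relationship_supp_restricted_line_before_logarithmic_analysis_lemma} through the Young measure $\pi_{t,x}$ of \eqref{eq:sequence}, and then use the constraint $\lambda_2=\mathcal{F}\,\lambda_1$ on $\supp\pi_{t,x}$. The difficulty is that $f(\ueps,\veps)$ is not a function of the sequence \eqref{eq:sequence} alone. The fix is that $\seps\to s$ strongly, so the only non-compact information in $(\ueps,\veps)$ sits in the ratio $\frac{\ueps}{\seps}$.

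\emph{Step 1: write $f(\ueps,\veps)$ through the ratio.} On $\{\seps>0\}$ we have $\ueps=\seps\,\frac{\ueps}{\seps}$ and $\veps=\seps\,(1-\frac{\ueps}{\seps})$. Define the Carathéodory function
$$\widetilde f(t,x,\lambda_1):=f\Big(s\,(\lambda_1+\tfrac{u}{s}),\; s\,(1-\lambda_1-\tfrac{u}{s})\Big),$$
with its arguments clipped to $[0,\infty)$ so that it is defined for all $\lambda_1$. Then
$$f(\ueps,\veps)-\widetilde f\Big(t,x,\tfrac{\ueps}{\seps}-\tfrac{u}{s}\Big)\to0 \quad\text{locally in measure on }\{s>0\}.$$
This follows from $\seps\to s$ a.e. by \ref{conv:sum_strong}, from $\frac{\ueps}{\seps}\in[0,1]$, and from the uniform continuity of $f$ on compacts, using the $L^\infty$ bound \ref{est:conservation}. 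Multiplying by the $L^2$-bounded factors $\partial_x\seps^{\alpha}-\partial_x s^\alpha$ or $\ueps$, the difference still tends to $0$ in $L^1_{\loc}$ by Cauchy–Schwarz, because the first factor is bounded and tends to $0$ in measure. Hence, restricted to $\{s>0\}$ (localised by $\mathds{1}_{s>0}$), the weak limits of $f(\ueps,\veps)$, $f(\ueps,\veps)(\partial_x\seps^\alpha-\partial_x s^\alpha)$ and $f(\ueps,\veps)(\frac{\ueps}{\seps}-\frac us)$ coincide with those of the corresponding $\widetilde f$-expressions. By \eqref{eq:representation_weak_limits_general_sequence_caratheodory_integrand}, these limits are the integrals against $\pi_{t,x}$ of $\widetilde f$, $\widetilde f\,\lambda_2$ and $\widetilde f\,\lambda_1$, respectively. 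The middle sequence is weakly compact because $f$ is bounded on the range and $\partial_x\seps^\alpha$ is bounded in $L^2$.

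\emph{Step 2: the two sides in terms of $\pi_{t,x}$.} By the hypotheses and \ref{conv:gradient_sum},
$$\overline{f\,\partial_x s^\alpha}-\overline f\,\partial_x s^\alpha=\lim f(\ueps,\veps)\,(\partial_x\seps^\alpha-\partial_x s^\alpha)=\int\widetilde f\,\lambda_2\diff\pi_{t,x}.$$
Similarly, using $\ueps=\seps\frac{\ueps}{\seps}$ and the strong convergence of $\seps$,
$$\overline{f\,u}-\overline f\,u=s\Big(\overline{f\,\tfrac us}-\overline f\,\tfrac us\Big)=s\int\widetilde f\,\lambda_1\diff\pi_{t,x}.$$
On $\supp\pi_{t,x}$ we have $\lambda_2=\mathcal{F}\lambda_1$. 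Substituting this into the first identity gives $\mathcal{F}\int\widetilde f\,\lambda_1\diff\pi_{t,x}$. Multiplying by $s$ and comparing with the second identity yields \eqref{eq:limits_relationship_supp_restricted_line_before_logarithmic_analysis_lemma}.

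\emph{Main obstacle.} The delicate point is Step 1, namely identifying limits of products with the weakly converging factor $\partial_x\seps^\alpha$ when $f(\ueps,\veps)$ is only asymptotically a function of the Young-measure variable. I would handle it by working on compact sets, where $\{\seps\}$ is uniformly bounded, and splitting off the set where $|\seps-s|>\delta$, whose measure tends to $0$. The equi-integrability of $|\partial_x\seps^\alpha|^2$ is not available, but the $L^2$ bound suffices for products with factors that tend to $0$ in measure and are uniformly bounded. The set $\{\seps=0\}\cap\{s>0\}$ has vanishing measure, so the convention $\frac{\ueps}{\seps}=0$ there is harmless. The resulting identity holds for a.e. $(t,x)$ in $\{s>0\}$, in particular at the points considered.
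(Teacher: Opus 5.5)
Your proposal is correct and follows essentially the same route as the paper. You replace $f(\ueps,\veps)$ by $f$ evaluated at $\bigl(s\,\frac{\ueps}{\seps},\, s\,\frac{\veps}{\seps}\bigr)$, an error that vanishes strongly in $L^2_{\loc}$ because $\seps\to s$ a.e.\ and everything is bounded; you represent the weak limits of its products with $\partial_x\seps^{\alpha}-\partial_x s^{\alpha}$ and with $\frac{\ueps}{\seps}-\frac{u}{s}$ through $\pi_{t,x}$; and you conclude using $\lambda_2=\mathcal{F}\,\lambda_1$ together with $s\,\overline{f(u,v)\,\frac{u}{s}}=\overline{f(u,v)\,u}$, exactly as the paper does (your clipping extension of $f$ and your explicit treatment of $\{\seps=0\}$ are, if anything, slightly more careful).
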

\begin{proof}
Since $f$ is continuous, we can extend it to a continuous function on $\R^2$ by letting $f(u,v) = 0$ for $u,v\leq 0$, $f(u,v)=f(0,v)$ for $u\leq 0$, $v\geq 0$ and $f(u,v)=f(u,0)$ for $v\leq 0$, $u\geq 0$. Note that
$$
\Delta(t,x):= \left| f(\ueps, \veps) - f\left(\frac{\ueps}{\seps}\, s, \frac{\veps}{\seps}\,s\right) \right|
$$
is uniformly bounded in $L^{\infty}((0,T)\times\R)$ and $\Delta(t,x) \leq \omega_{f}(|\seps-s|) \to 0$ a.e. with $\omega_f$ being a modulus of continuity of $f$ on some bounded set $[0,M]^2$ such that $\seps \leq M$ on $[0,T]\times\R$. By the dominated convergence, $\Delta \to 0$ strongly in $L^2_{\loc}((0,T)\times\R)$, so we can replace $f(\ueps, \veps)$ with $ f\left(\frac{\ueps}{\seps}\, s, \frac{\veps}{\seps}\,s\right)$ in all the sequences whose limits appear in \eqref{eq:limits_relationship_supp_restricted_line_before_logarithmic_analysis_lemma} (this uses that $\{\partial_x \seps^{\alpha}\}$ and $\{\ueps\}$ are bounded in $L^2((0,T)\times\R)$ by \ref{est:crucial_estimate_gradient_power_alpha} and \ref{est:conservation}). We write
$$
f\left(\frac{\ueps}{\seps}\, s, \frac{\veps}{\seps}\,s\right) = 
f\left(\left(\frac{\ueps}{\seps} - \frac{u}{s}\right)\, s + u, \left(-\frac{\ueps}{\seps}+\frac{u}{s}\right)\,s + v\right).
$$
Therefore,
$$
f\left(\frac{\ueps}{\seps}\, s, \frac{\veps}{\seps}\,s\right) \, (\partial_x \seps^{\alpha} - \partial_x s^{\alpha}) \weak
\int_{\R^2} f(\lambda_1\,s + u,  - \lambda_1 s + v)\, \lambda_2  \diff \pi_{t,x}(\lambda_1, \lambda_2).
$$
Similarly, 
$$
f\left(\frac{\ueps}{\seps}\, s, \frac{\veps}{\seps}\,s\right) \, \left(\frac{\ueps}{\seps} - \frac{u}{s}\right) \weak 
\int_{\R^2} f(\lambda_1\,s + u, - \lambda_1 s + v)\,  \lambda_1 \diff \pi_{t,x}(\lambda_1, \lambda_2).
$$
The claim follows by using $\lambda_2 = \mathcal{F}\,\lambda_1$ and observing that $s\, \overline{f(u, v)\, \frac{u}{s}} = \overline{f(u,v)\, u}$ (which can be deduced by considering the equality $\seps \, f(\ueps, \veps)\, \frac{\ueps}{\seps} = f(\ueps, \veps)\, \ueps$ and using the strong convergence of $\{\seps\}$ from \ref{conv:sum_strong}).
\end{proof}
\begin{proof}[Proof of Proposition \ref{prop:the_case_of_support_on_the_line_identification_of_YM}] We split the reasoning for several steps.\\

\underline{Step 1. Identity between weak limits.} We will prove that:
\begin{equation}\label{eq:proof_identification_YM_step1_algebraic_identity_between_weak_limits_logarithmic_variables}
\overline{u\, \partial_x s^{\alpha}}\, \overline{E}  - u\, \overline{E  \,\partial_x s^{\alpha}} + s\, \overline{u  \,\partial_x s^{\alpha}} - u\, s \,\partial_x s^{\alpha}  + \mathcal{F}\, ( \overline{u\, v\log v} - u\, \overline{v  \log v} )  = 0,
\end{equation}
where $\overline{u\, \partial_x s^{\alpha}}$, $\overline{E}$, $\overline{E  \,\partial_x s^{\alpha}}$, $\overline{u\, v\log v}$, $\overline{v  \log v}$ are weak limits as in \ref{conv_logarithmic_fcns}, \ref{conv_product_u_derivative_s_alpha}, \ref{conv:ulogu_vlogv_times_der}. We apply Theorem~\ref{thm:div_curl_lemma_version_CR} to the vector fields
\begin{align*}
{\bf a_\eps} &= \left(\ueps, -\ueps\, \partial_x p(\seps) -\ueps \, \partial_x \mathcal{V}^1[\ueps,\veps] \right),\\
{\bf b_\eps} &= \left((E_{\eps} + \seps)\,\partial_x p(\seps) + \ueps  \log \ueps\, \partial_x \mathcal{V}^1[\ueps, \veps] + \veps  \log \veps\, \partial_x \mathcal{V}^2[\ueps, \veps], E_{\eps} \right). 
\end{align*}
Let us explain why the assumptions of Theorem \ref{thm:div_curl_lemma_version_CR} are satisfied. Clearly, both $\{\bf a_\eps\}$ and $\{\bf b_\eps\}$ converge weakly in $L^2_{\loc}((0,T)\times\R)$ to some ${\bf a}$ and ${\bf b}$ by
\ref{conv:sum_strong}, \ref{conv:u_v_weak}, \ref{conv:velocity_fields}, \ref{conv_logarithmic_fcns}, \ref{conv_product_u_derivative_s_alpha}, and \ref{conv:ulogu_vlogv_times_der} in Proposition~\ref{prop:basic_convergences}. Moreover, $\DIV {\bf a_\eps} =\eps\, \partial^2_x \ueps$ converges strongly to 0 in $H^{-1}((0,T)\times\R)$ because, by \ref{est:energy_derivatives_u_v_eps_signular}, $\eps\,\partial_x\ueps, \eps\,\partial_x \veps\to 0$ strongly in $L^2((0,T)\times\R)$. Regarding $\CURL {\bf b_\eps}$, we have $\CURL {\bf b_\eps} = - g_{\eps} - h_{\eps}$, where $\{g_{\eps}\}$ and $\{h_{\eps}\}$ are defined in Proposition~\ref{prop:new_cons_law_log_general_alpha}. We know that $\{g_{\eps}\}$ is bounded in $L^1((0,T)\times\R)$, which compactly embeds into $W^{-1,p}_{\loc}((0,T)\times\R)$ for $p<2$ \cite[Theorem 6, p.~7]{MR1034481}, while $\{h_{\eps}\}$ is strongly compact in $H^{-1}((0,T)\times\R)$. Combining these two facts, $\{\CURL {\bf b_\eps}\}$ is strongly compact in $(W^{1,\infty}_0(\Omega))^*$ for each bounded $\Omega \subset [0,T]\times\R$. Finally, the product $\{{\bf a_\eps} \cdot {\bf b_\eps}\}$ is uniformly integrable in $L^1(\Omega)$ for $\Omega$ as above. Indeed, writing ${\bf a_\eps} = (a^1_{\eps}, a^2_{\eps})$ and ${\bf b_\eps} = (b^1_{\eps}, b^2_{\eps})$, $\{a^1_{\eps}\}$ and $\{b^2_{\eps}\}$ are bounded in $L^{\infty}((0,T)\times\R)$ by \ref{est:conservation} and \ref{est:ulogu_vlogv}, while $\{a^2_{\eps}\}$ and $\{b^1_{\eps}\}$ are bounded at least in $L^{2}_{\loc}((0,T)\times\R)$ by \ref{conv_product_u_derivative_s_alpha}, \ref{conv:ulogu_vlogv_times_der}, and \ref{est:ulogu_vlogv}. It follows that $\{{\bf a_\eps} \cdot {\bf b_\eps}\}$ is bounded in $L^2(\Omega)$, which implies uniform integrability in $L^1(\Omega)$. \\

We compute
\begin{align*}
{\bf a_\eps} \cdot {\bf b_\eps} =\,& \ueps\, ((E_{\eps} + \seps)\,\partial_x p(\seps) + \ueps  \log \ueps\, \partial_x \mathcal{V}^1[\ueps, \veps] + \veps  \log \veps\, \partial_x \mathcal{V}^2[\ueps, \veps]) \\
&-  (\ueps\, \partial_x p(\seps) +\ueps \, \partial_x \mathcal{V}^1[\ueps,\veps])\, E_{\eps}\\
=\,& - \ueps (E_{\eps}\, \partial_x \mathcal{V}^1[\ueps,\veps]  \!-\! \seps \,\partial_x p(\seps) \!-\!  \ueps  \log \ueps\, \partial_x \mathcal{V}^1[\ueps, \veps] \!-\! \veps  \log \veps\, \partial_x \mathcal{V}^2[\ueps, \veps])\\
=\,& - \ueps\, \veps\log\veps\, \partial_x (\mathcal{V}^1[\ueps,\veps]-\mathcal{V}^2[\ueps,\veps]) + \ueps \,  \seps \,\partial_x p(\seps).
\end{align*}
By \ref{conv:sum_strong}, \ref{conv:velocity_fields}, \ref{conv_logarithmic_fcns} and \ref{conv_product_u_derivative_s_alpha}, its limit in the sense of distributions equals
$$
{\bf a_\eps} \cdot {\bf b_\eps}  \weak
-\,\overline{u\, v\log v}\, \partial_x (\mathcal{V}^1[u,v]-\mathcal{V}^2[u,v]) + s\, \overline{u  \,\partial_x p(s)}.
$$
The convergences above together with \ref{conv:u_v_weak}, \ref{conv:ulogu_vlogv_times_der} allow us to identify also products of the weak limits ${\bf a} = (a^1, a^2)$, ${\bf b} = (b^1, b^2)$ as 
$$
a^1\, b^1 = u\,( \overline{E  \,\partial_x p(s)} + s \,\partial_x p(s) + \overline{u \log u}\, \partial_x \mathcal{V}^1[u,v] + \overline{v  \log v} \, \partial_x \mathcal{V}^2[u,v] ),
$$
$$
{a^2} \, b^2 = -(\overline{u\, \partial_x p(s)} +u \, \partial_x \mathcal{V}^1[u,v] ) \, \overline{E}.
$$
Expanding $\overline{E}$, we obtain
$$
{\bf a} \cdot {\bf b} = 
-\overline{u\, \partial_x p(s)}\, \overline{E}  + u\, \overline{E  \,\partial_x p(s)} + u\, s \,\partial_x p(s) - u\, \overline{v  \log v} \, \partial_x (\mathcal{V}^1[u,v] - \mathcal{V}^2[u,v]).
$$
By Theorem \ref{thm:div_curl_lemma_version_CR}
\begin{multline*}
\overline{u\, \partial_x p(s)}\, \overline{E}  - u\, \overline{E  \,\partial_x p(s)} + s\, \overline{u  \,\partial_x p(s)} - u\, s \,\partial_x p(s) \\ + ( u\, \overline{v  \log v} - \overline{u\, v\log v}) \, \partial_x (\mathcal{V}^1[u,v] - \mathcal{V}^2[u,v])  = 0.
\end{multline*}
By \ref{conv_product_u_derivative_s_alpha} and \ref{conv:ulogu_vlogv_times_der}, $\alpha\,s\, \overline{u\, \partial_x p(s)} = \overline{u\, \partial_x s^{\alpha}}$ and $\alpha\,s\,\overline{E  \,\partial_x p(s)} = \overline{E  \,\partial_x s^{\alpha}}$. We also have $\alpha\,s \, u\, \partial_x p(s)= u\, \partial_x s^{\alpha}$ by a direct computation. Plugging in these formulas, multiplying by $\alpha\,s$ and recalling the definition of the constant $\mathcal{F}$ in \eqref{eq:definition_quantity_F_necessary_later} we obtain \eqref{eq:proof_identification_YM_step1_algebraic_identity_between_weak_limits_logarithmic_variables}.\\

\underline{Step 2. An identity between weak limits without $\partial_x s^{\alpha}$.} We now eliminate $\partial_x s^{\alpha}$ from \eqref{eq:proof_identification_YM_step1_algebraic_identity_between_weak_limits_logarithmic_variables} by proving
\begin{equation}\label{eq:proof_identification_YM_step2_algebraic_identity_between_weak_limits_logarithmic_variables}
(\overline{E} + s) \,(\overline{u^2} -  u^2) + s\,( \overline{u\, v\log v} - u\, \overline{v  \log v})  - u\,(\overline{u\, E} -  u\, \overline{E})  = 0,
\end{equation}
where $\overline{u^2}$ is the weak limit as in \ref{conv:square_product_uv}. To this end, by Lemma \ref{lem:general_identity_to_replace_partialx_ps_with_just_u} applied to the function $f(u,v)=u\log u + v\log v$ (the assumptions of the lemma are satisfied by \ref{conv_logarithmic_fcns} and \ref{conv:ulogu_vlogv_times_der}), we have
$$
s\, \overline{E  \,\partial_x s^{\alpha}} = 
s\, \overline{E}  \,\partial_x s^{\alpha} +  \mathcal{F} \,(\overline{u\, E} -  u\, \overline{E}).
$$
We multiply by $u$ so that the first two terms in \eqref{eq:proof_identification_YM_step1_algebraic_identity_between_weak_limits_logarithmic_variables} (multiplied by $s$) can be written as
$$
s\,\overline{u\, \partial_x s^{\alpha}}\, \overline{E}  - s\,u\, \overline{E  \,\partial_x s^{\alpha}}= s\, \overline{E}\,(\overline{u\, \partial_x s^{\alpha}} - {u\, \partial_x s^{\alpha}} ) -\mathcal{F} \, u\,(\overline{u\, E} -  u\, \overline{E}).
$$
Hence, we multiply \eqref{eq:proof_identification_YM_step1_algebraic_identity_between_weak_limits_logarithmic_variables} by $s$ to obtain
$$
s\, (\overline{E} + s) \, (\overline{u  \,\partial_x s^{\alpha}} - u \,\partial_x s^{\alpha}) + \mathcal{F}\,s\,( \overline{u\, v\log v} - u\, \overline{v  \log v})  -\mathcal{F} \, u\,(\overline{u\, E} -  u\, \overline{E})  = 0.
$$
Applying Lemma \ref{lem:general_identity_to_replace_partialx_ps_with_just_u} with $f(u,v)=u$ we have
$$
s\, \left(\overline{u \, \partial_x s^{\alpha}} - u\, \partial_x s^{\alpha} \right)= \mathcal{F} \, (\overline{ u^2 } - u^2).
$$
Since $\mathcal{F}\neq 0$, we obtain \eqref{eq:proof_identification_YM_step2_algebraic_identity_between_weak_limits_logarithmic_variables} as desired.\\

\underline{Step 3. Decomposition of \eqref{eq:proof_identification_YM_step2_algebraic_identity_between_weak_limits_logarithmic_variables} for two nonnegative terms.} We define
\begin{equation}\label{eq:definition_term_X_identification_YM_ueps} 
X := (\overline{E} - E) \,(\overline{u^2} -  u^2),
\end{equation}
$$
Y := (E+ s) \,(\overline{u^2} -  u^2) + s\,( \overline{u\, v\log v} - u\, \overline{v  \log v})  - u\,(\overline{u\, E} -  u\, \overline{E}),
$$
where $E = u \log u + v\log v$. We claim that $X \geq 0$ and $Y \geq 0$.\\

\underline{Step 3.1. $X\geq 0$.} Let $\{\mu_{t,x}\}_{t,x}$ be the Young measure of the sequence $\{\ueps\}_{\eps\in(0,1)}$, i.e. whenever $f:\R\to \R$ is continuous and $\{f(\ueps)\}_{\eps\in(0,1)}$ is weakly compact in $L^1_{\loc}((0,T)\times\R)$, we have
\begin{equation}\label{eq:YM_representation_of_weak_limits_with_just_ueps}
f(\ueps) \weak \int_{[0,\infty)} f(\lambda) \diff \mu_{t,x} \mbox{ weakly in } L^1_{\loc}((0,T)\times\R),
\end{equation}
see \cite[Chapter~6]{MR1452107}. We can apply \eqref{eq:YM_representation_of_weak_limits_with_just_ueps} with $f(\lambda) = \lambda \log(\lambda)$ and $f(\lambda) = \lambda^2$ by \ref{est:conservation} and \ref{est:ulogu_vlogv}. Now, since $\mu_{t,x}$ is a probability measure and $[0,\infty) \mapsto \lambda\log \lambda$ is convex, by Jensen's inequality
\begin{equation}\label{eq:weak_limit_ulogu_controlled_from_below_by_ulogu}
\overline{ u\log u} \! = \! \int_{[0,\infty)} \! \lambda \log \lambda \diff \mu_{t,x}(\lambda) \! \geq \! \left(\int_{[0,\infty)} \! \lambda \diff \mu_{t,x}(\lambda)\right)  \log \left( \int_{[0,\infty)}\! \lambda \diff \mu_{t,x}(\lambda) \right) \!= \!u\log u.
\end{equation}
Similarly, we prove $\overline{u^2} \geq u^2$, $\overline{v\log v} \geq v\log v$ and we conclude $X \geq 0$. \\

\underline{Step 3.2. $Y\geq 0$.} $Y$ is the weak limit of the sequence $Y_{\eps} = Y^1_{\eps} + Y^2_{\eps}$, where 
$$
Y^1_{\eps}:= (E+ s) \,(\ueps^2 -  u^2), \qquad Y^2_{\eps}:= s\,( u_{\eps}\, v_{\eps} \log v_{\eps} - u\, v_{\eps}  \log v_{\eps})  - u\,({u_{\eps}\, E_{\eps}} -  u\, {E_{\eps}}).
$$
First, we can write $Y^2_{\eps}$ as 
\begin{align*}
Y^2_{\eps} &= s\,( u_{\eps}\, v_{\eps} \log v_{\eps} - u\, v_{\eps}  \log v_{\eps})  - u\,(u_{\eps}^2 \log \ueps + \ueps\, \veps \log \veps -  u\, \ueps \log \ueps - u\, \veps \log \veps)\\
&= - (\ueps-u)\, u\, \ueps\log \ueps + (\ueps -u)\,v\, \veps \log \veps =: Y^{2,1}_{\eps} + Y^{2,2}_{\eps}.
\end{align*}
For the term $Y^{2,1}_{\eps}$, we decompose $\log \ueps = \log u + \log \frac{\ueps}{u}$, $\ueps=(\ueps-u) + u$ and we use Lemma \ref{lem:upper_bound_on_the_x-y_log_xy} to obtain
\begin{align*}
Y^{2,1}_{\eps} &= - (\ueps-u)\, u\, \ueps\log \ueps = - (\ueps-u)\, u\, \ueps\log u - (\ueps-u)\, u\, \ueps\log \frac{\ueps}{u}\\
&\geq  - (\ueps-u)^2\,  u\,\log u - (\ueps-u)\, u^2 \log u - \frac{\ueps+u}{2} (\ueps-u)^2.
\end{align*}
Note that there is no issue with $\log\ueps$ or $\log u$ when $\ueps=0$ or $u=0$, respectively, as the term is multiplied by $u\,\ueps$. Similarly, for $Y_{\eps}^{2,2}$ we have
\begin{align*}
Y^{2,2}_{\eps} &= (\ueps -u)\,v\, \veps \log \veps = (\seps -s)\,v\, \veps \log \veps - (\veps-v)\,v\, \veps \log \veps\\
&= (\seps -s)\,v\, \veps \log \veps - (\veps-v)\,v\, \veps \log \frac{\veps}{v} - (\veps-v)\,v\, \veps \log v\\
&= (\seps -s)\,v\, \veps \log \veps - (\veps-v)\,v\, \veps \log \frac{\veps}{v} - (\veps-v)^2\,v \log v - (\veps-v)\,v^2 \log v\\
&\geq (\seps -s)\,v\, \veps \log \veps - \frac{\veps+v}{2}(\veps-v)^2 - (\veps-v)^2\,v \log v - (\veps-v)\,v^2 \log v.
\end{align*}
In the terms with $(\veps-v)^2$, we use the identity
$$
(\veps-v)^2 = (\ueps-u)^2 + (\seps-s)\,(\veps-v+u-\ueps),
$$
so we can write
\begin{align*}
Y^{2,2}_{\eps} \geq\, & (\seps -s)\,v\, \veps \log \veps 
- \frac{\veps+v}{2}(\ueps-u)^2 
- (\ueps-u)^2\,v \log v\\ 
&-\Big(\frac{\veps+v}{2} + v \log v\Big)\,(\seps-s)\,(\veps-v+u-\ueps)
- (\veps-v)\,v^2 \log v.
\end{align*}
We collect all terms in the estimates for $Y^{2,1}_{\eps}$ and $Y^{2,2}_{\eps}$ converging at least weakly to 0 
\begin{align*}
Z_{\eps}:= \,&  - (\ueps-u)\, u^2 \log u + (\seps -s)\,v\, \veps \log \veps\\
& -\Big(\frac{\veps+v}{2} + v \log v\Big)\,(\seps-s)\,(\veps-v+u-\ueps)
- (\veps-v)\,v^2 \log v,
\end{align*}
so that we can finally write
\begin{align*}
Y^2_{\eps} &\geq - (\ueps-u)^2\,  u\,\log u  - \frac{\ueps+u}{2} (\ueps-u)^2 
- \frac{\veps+v}{2}(\ueps-u)^2 
- (\ueps-u)^2\,v \log v + Z_{\eps}\\
& = - (\ueps-u)^2\, E - \frac{\seps+s}{2} \, (\ueps-u)^2 + Z_{\eps}. 
\end{align*}
Combining with $Y^1_{\eps} = (E+s)\,(\ueps^2-u^2) = (E+s)\,(\ueps-u)^2 + 2\,u\,(E+s)\,(\ueps-u)$, we obtain
$$
Y_{\eps} = Y^1_{\eps} + Y^2_{\eps} \geq Z_{\eps} + \frac{s-\seps}{2}(\ueps-u)^2 + 2\,u\,(E+s)\,(\ueps-u).
$$
We define $\widetilde{Z_{\eps}} := Z_{\eps} + \frac{s-\seps}{2}(\ueps-u)^2 + 2\,u\,(E+s)\,(\ueps-u)$ and we observe that $\widetilde{Z_{\eps}}  \weak 0$ weakly in $L^1_{\loc}((0,T)\times\R)$ (because it contains either terms composed of $(\seps-s)$ which converge strongly by \ref{conv:sum_strong} or terms composed of $(\ueps-u)$, $(\veps-v)$ converging weakly by \ref{conv:u_v_weak}, multiplied by some functions which are bounded independently of $\eps$). Let $\varphi \in C_c^{\infty}((0,T)\times\R)$, $\varphi \geq 0$. By $Y_{\eps} - \widetilde{Z_{\eps}} \geq 0$, we have 
$$
\int_0^T \int_{\R} (Y_{\eps} - \widetilde{Z_{\eps}} )\, \varphi \diff x \diff t \geq 0 \implies \int_0^T \int_{\R} Y\, \varphi \diff x \diff t \geq 0.
$$ 
Since $\varphi \geq 0$ is arbitrary, we deduce ${Y} \geq 0$ as desired.\\

\underline{Step 4. Conclusion.} In Step 3 we decomposed \eqref{eq:proof_identification_YM_step2_algebraic_identity_between_weak_limits_logarithmic_variables} for two nonnegative terms $X$, $Y$. From \eqref{eq:proof_identification_YM_step2_algebraic_identity_between_weak_limits_logarithmic_variables} we have $X+Y = 0$ so we deduce $X=Y=0$. From \eqref{eq:definition_term_X_identification_YM_ueps}, this means $\overline{E}=E$ or $\overline{u^2} = u^2$. We first show that $\overline{E}=E$ implies $\overline{u^2} = u^2$ so that we can assume simply $\overline{u^2} = u^2$. Indeed, by \eqref{eq:weak_limit_ulogu_controlled_from_below_by_ulogu}, we know that $d_u:= \overline{u\log u} - u\log u \geq 0$, $d_{v} := \overline{v \log v} - v\log v \geq0$. Since $\overline{E}=E$, we obtain $d_u = d_ v = 0$, that is $\overline{u\log u} = u\log u$ and $\overline{v \log v} = v\log v$.\\

Let $\{\mu_{t,x}\}$ be the Young measure of the sequence $\{\ueps\}_{\eps\in(0,1)}$ as in Step 3.1. We claim that $\overline{u\log u} = u\log u$ implies that $\mu_{t,x} = \delta_{u(t,x)}$. Indeed, if $u(t,x) = 0$, we have $0 = u(t,x) = \int_{[0,\infty)} \lambda \diff \mu_{t,x}(\lambda)$ so that $\mu_{t,x} = \delta_{0} = \delta_{u(t,x)}$ as claimed. If $u(t,x)>0$, we have for each $\lambda \in [0,\infty)$ by the Taylor's expansion with an integral remainder of $f(\lambda) = \log \lambda$ with $f'(\lambda) =\log u + 1$, $f''(\lambda)=\frac{1}{u}$ 
$$
\lambda \, \log \lambda = u\, \log u + (\lambda - u) \,( \log u + 1) + \int_{u}^{\lambda} \frac{\lambda - \tau}{\tau} \diff \tau.
$$
Integrating this identity with respect to $\mu_{t,x}(\lambda)$ and using $\overline{u\log u} = u\log u$ we obtain
$$
\int_{[0,\infty)} \int_{u}^{\lambda} \frac{\lambda - \tau}{\tau} \diff \tau \diff \mu_{t,x}(\lambda) = 0.
$$
The integrand is nonnegative and if there existed any $\lambda \in \supp \mu_{t,x}$ such that $\lambda \neq u(t,x)$, we would gain a positive contribution to the integral. It follows that $\mu_{t,x} = \delta_{u(t,x)}$ as claimed. Since $\mu_{t,x} = \delta_{u(t,x)}$, we deduce $\overline{u^2} = \int_{[0,\infty)} \lambda^2 \diff \mu_{t,x}(\lambda) = u^2$ as desired.\\

Hence, we have $\overline{u^2}=u^2$. At any point $(t,x)$ such that $s(t,x)>0$, we have $s^2\, \overline{\, \frac{u^2}{s^2}\,}  = \overline{u^2}$ cf. \ref{conv:fractions} so that
$$
\left(\frac{\ueps}{\seps}-\frac{u}{s}\right)^2 \weak \overline{\,\frac{u^2}{s^2}\,} -  \frac{u^2}{s^2} = \frac{1}{s^2}\left(\overline{u^2} - u^2 \right).
$$
Since $\overline{u^2}=u^2$, $\left(\frac{\ueps}{\seps}-\frac{u}{s}\right)^2 \weak 0$ and we finally obtain $\int_{\R^2} \lambda_1^2 \diff \pi_{t,x}(\lambda_1,\lambda_2) =0$ as desired.
\end{proof}

\subsection{Proof of Theorem \ref{thm:main}}\label{sect:main_proof_after_two_steps}
Let $(u,v)$ be the limit in Proposition \ref{prop:basic_convergences}. In order to prove that $(u,v)$ is a solution to \eqref{eq:general_cross_diffusion_intro_short_velocity}, according to Proposition \ref{prop:main_thm_by_Young_measures}, we only need to check that for a.e. $(t,x)$ $\int_{\R^2} \lambda_1\, \lambda_2 \diff \pi_{t,x}(\lambda_1,\lambda_2) = 0$. By Proposition \ref{prop:support_YM_is_line} this is the case for all points $(t,x)$ except for those such that $\pi_{t,x}$ is supported on the line $\lambda_2 = \mathcal{F}\,\lambda_1$. However, Proposition \ref{prop:the_case_of_support_on_the_line_identification_of_YM} shows that if this is the case than $\pi_{t,x}= \delta_{(0,0)}$ so we again obtain $\int_{\R^2} \lambda_1\, \lambda_2 \diff \pi_{t,x}(\lambda_1,\lambda_2) = 0$. \\

We pass to the proof of modes of convergences. Clearly, the weak/weak$^*$ convergences of $\{\ueps\}$, $\{\veps\}$ and strong convergence of $\{\seps\}$ follow directly from Proposition \ref{prop:basic_convergences}. We now discuss the strong convergence of $\{\partial_x \seps^{\alpha}\}$. This argument is nowadays standard and has been used e.g. in \cite{MR4712820, MR4179253, MR4880211}. Multiplying the PDE for $\seps$ by $\seps^{\alpha}$ we have for each $\eps>0$ and $t\in[0,T]$
\begin{equation}\label{eq:energy_identity_before_sending_eps_to_0}
\begin{split}
& \int_{\R} \frac{ \seps^{\alpha+1}(t,x)\! -\! (s^0)^{\alpha+1}(x) }{\alpha+1} \diff x\! +\! \frac{1}{\alpha} \int_0^t \int_{\R} |\partial_x \seps^{\alpha}|^2 \diff x \diff \tau\! +\! \eps \int_0^t \int_{\R} \partial_x \seps \, \partial_x \seps^{\alpha} \diff x \diff \tau\\&= -\int_0^t \int_{\R} \seps\, \frac{\ueps}{\seps}\, \partial_x \seps^{\alpha}\, \partial_x \mathcal{V}^1[\ueps, \veps] \diff x \diff \tau -\int_0^t \int_{\R} \seps\, \frac{\veps}{\seps}\, \partial_x \seps^{\alpha} \, \partial_x \mathcal{V}^2[\ueps, \veps] \diff x \diff \tau.
\end{split}
\end{equation}
We also have this identity for $s$ because we can use $s^{\alpha}$ as a test function in Definition~\ref{def:weak_sol} as $\partial_x s^{\alpha} \in L^2((0,T)\times\R)$. More precisely, we extend $s(t,x) = s^0(x)$ for $t<0$, we test the PDE with $((s\ast \eta_{\delta}\ast \omega_{\kappa} \, \psi)^{\alpha} \,\psi) \ast \eta_{\delta}\ast \omega_{\kappa}$, where $\psi$ is smooth and compactly supported, $\{\eta_{\delta}\}_{\delta \in (0,1)}$ is a mollification in time while $\{\omega_{\kappa}\}_{\kappa \in (0,1)}$ is a mollification in space. After summing up and obtaining an identity for $\frac{1}{\alpha+1} \partial_t \int_{\R} (s\ast \eta_{\delta}\ast \omega_{\kappa} \, \psi)^{\alpha+1} \diff x$, we integrate in time and we can pass to the limit $\delta, \kappa \to 0$ and $\psi \to 1$ on $\R$ using the estimates on $\partial_x s^{\alpha}$ and $s |x|$. We obtain
\begin{equation}\label{eq:energy_identity_after_sending_eps_to_0}
\begin{split}
&\int_{\R} \frac{ s^{\alpha+1}(t,x) - (s^0)^{\alpha+1}(x)}{\alpha+1}  \diff x +\frac{1}{\alpha} \int_0^t \int_{\R} |\partial_x s^{\alpha}|^2 \diff x \diff \tau \\&= -\int_0^t \int_{\R} s\, \frac{u}{s}\, \partial_x s^{\alpha}\, \partial_x \mathcal{V}^1[u, v] \diff x \diff \tau -\int_0^t \int_{\R} s\, \frac{v}{s}\, \partial_x s^{\alpha} \, \partial_x \mathcal{V}^2[u, v] \diff x \diff \tau.
\end{split}
\end{equation}
It is easy to see that
\begin{equation}\label{eq:convergence_energy_advection_term_with_u_claim}
\int_0^t \int_{\R} \seps \, \frac{\ueps}{\seps}\, \partial_x \seps^{\alpha}\, \partial_x \mathcal{V}^1[\ueps, \veps] \diff x \diff \tau \to \int_0^t \int_{\R} s\, \frac{u}{s}\, \partial_x s^{\alpha}\, \partial_x \mathcal{V}^1[u,v] \diff x \diff \tau \mbox{ as } \eps \to 0.
\end{equation}
Indeed, we split $\R$ for $[-R,R]$ and $\R \setminus [-R,R]$. For all $R>0$, we have covergence of the integral over $[-R,R]$  thanks to \ref{conv:sum_strong}, \ref{conv:velocity_fields} and because $\overline{\frac{\ueps}{\seps}\, \partial_x \seps^{\alpha}} = \frac{u}{s}\, \partial_x s^{\alpha}$. On $\R \setminus [-R, R]$ we use that the integral is small
$$
\left|\int_0^t \int_{\R \setminus [-R,R]} \ueps\, \partial_x \seps^{\alpha}\, \partial_x \mathcal{V}^1[\ueps, \veps] \diff x \diff \tau \right| \leq \|\partial_x \mathcal{V}^1\|_{L^{\infty}}\, \|\partial_x \seps^{\alpha}\|_{L^2((0,T)\times\R)}\, \frac{\|\ueps\, |x|^{\frac{1}{2}}\|_{L^2((0,T)\times\R)}}{R^{\frac{1}{2}}}.
$$
This estimate is uniform in $\eps$ by Assumption \ref{ass:velocity_kernels_main} as well as \ref{est:moment}, \ref{est:crucial_estimate_gradient_power_alpha} and \ref{est:conservation}. Similarly,
$$
\left|\int_0^t \int_{\R \setminus [-R,R]} u\, \partial_x s^{\alpha}\, \partial_x \mathcal{V}^1[u,v] \diff x \diff \tau \right| \leq \frac{C}{R^{\frac{1}{2}}}
$$
for some constant $C$. This proves the convergence \eqref{eq:convergence_energy_advection_term_with_u_claim} by taking the limit $R\to\infty$. Analogously, 
\begin{equation}\label{eq:convergence_energy_advection_term_with_v_claim}
\int_0^t \int_{\R} \seps \, \frac{\veps}{\seps}\, \partial_x \seps^{\alpha}\, \partial_x \mathcal{V}^1[\ueps, \veps] \diff x \diff \tau \to \int_0^t \int_{\R} s\, \frac{v}{s}\, \partial_x s^{\alpha}\, \partial_x \mathcal{V}^1[u,v] \diff x \diff \tau \mbox{ as } \eps \to 0.
\end{equation}
We now observe that there is a subsequence (not relabelled) such that for a.e. $t \in [0,T]$
\begin{equation}\label{eq:convergence_energies_ae_time}
\frac{1}{\alpha+1} \int_{\R} \left( \seps^{\alpha+1}(t,x) - (s^0)^{\alpha+1}(x) \right) \diff x \to \frac{1}{\alpha+1} \int_{\R} \left( s^{\alpha+1}(t,x) - (s^0)^{\alpha+1}(x) \right) \diff x.
\end{equation}
Indeed, by the strong convergence \ref{conv:sum_strong} we have $\int_0^T \int_\R | \seps^{\alpha+1} - s^{\alpha+1}| \diff x \diff t \to 0$ so that $\int_{\R}  \seps^{\alpha+1}(t,x) \diff x \to \int_{\R} s^{\alpha+1}(t,x) \diff x$ in $L^1(0,T)$ and the existence of a subsequence follows. Therefore, by comparing \eqref{eq:energy_identity_before_sending_eps_to_0} and \eqref{eq:energy_identity_after_sending_eps_to_0}, and exploiting \eqref{eq:convergence_energy_advection_term_with_u_claim}, \eqref{eq:convergence_energy_advection_term_with_v_claim}, \eqref{eq:convergence_energies_ae_time} and that $\eps\, \int_0^t \int_{\R} \partial_x \seps \, \partial_x \seps^{\alpha} \diff x \diff \tau \geq 0$, we deduce that for a.e. $t \in [0,T]$
$$
\limsup_{\eps \to 0}  \int_0^t \int_{\R} |\partial_x \seps^{\alpha}|^2 \diff x \diff \tau \leq  \int_0^t \int_{\R} |\partial_x s^{\alpha}|^2 \diff x \diff \tau.
$$
It follows that $\limsup_{\eps \to 0} \int_0^t \int_{\R} |\partial_x \seps^{\alpha} -\partial_x s^{\alpha}|^2 \diff x \diff \tau \leq 0$ for a.e. $t \in (0,T)$. We conclude since $T>0$ is arbitrary.\\

We now prove the strong convergence of $\{\ueps\}$, $\{\veps\}$ in $L^p(\mathcal{S})$ and a.e. on $\mathcal{S}$, where 
$$
\mathcal{S}:= [0,T]~\times~\R \setminus \{(t,x): s(t,x) > 0 \mbox{ and } 
\partial_x\mathcal{V}^1[u,v] = \partial_x\mathcal{V}^2[u,v]\}.
$$
Let $\{\mu_{t,x}\}_{t,x}$ be the Young measure of $\{\ueps\}$, i.e. whenever $f(t,x,\lambda):[0,T]\times\R \times \R\to \R$ is a Carathéodory function (measurable in $(t,x)$ and continuous in $\lambda$) and $\{f(t,x,\ueps)\}$ is weakly compact in $L^1_{\loc}((0,T)\times\R)$,
$
f(t,x,\ueps) \weak \int_{[0,\infty)} f(t,x,\lambda) \diff \mu_{t,x} \mbox{ weakly in } L^1_{\loc}((0,T)\times\R),
$
see \cite[Chapter~6]{MR1452107}. We will prove that for $(t,x) \in \mathcal{S}$ we have $\mu_{t,x} = \delta_{u(t,x)}$, which implies the claim since $|\ueps - u|^p \weak \int_{[0,\infty)} |\lambda - u(t,x)|^p \diff \mu_{t,x}(\lambda) = 0$ for $p\geq1$. We consider two cases.
\begin{itemize}
\item If $s(t,x)=0$, we also have $u(t,x) = 0$ and
$
|\ueps(t,x)| \leq |\seps(t,x)| \to 0.
$
But $|\ueps(t,x)| \weak \int_{[0,\infty)} |\lambda| \diff \mu_{t,x}(\lambda)$ so that $\mu_{t,x} = \delta_{0} = \delta_{u(t,x)}$. 
\item When $s(t,x)>0$ and $\partial_x\mathcal{V}^1[u,v] \neq \partial_x \mathcal{V}^2[u,v]$, by Propositions \ref{prop:support_YM_is_line} and \ref{prop:the_case_of_support_on_the_line_identification_of_YM} we have $\pi_{t,x}((\R \setminus \{0\}) \times \R) = 0$, so that $\lambda_1 = 0$ for a.e.-$\pi_{t,x}$ $(\lambda_1, \lambda_2)$. In particular, by \ref{conv:fractions} and $\overline{\, \frac{u^2}{s^2} \,} - \frac{u^2}{s^2} = \int_{\R^2} \lambda_1^2 \diff \pi_{t,x}(\lambda_1, \lambda_2)$,
$$
 \overline{u^2} = s^2 \, \overline{\, \frac{u^2}{s^2} \,} =s^2 \left( \int_{\R^2} \lambda_1^2 \diff \pi_{t,x}(\lambda_1, \lambda_2) + \frac{u^2}{s^2} \right) = u^2,
$$
so $\int_{[0,\infty)} |\lambda - u(t,x)|^2 \diff \mu_{t,x}(\lambda) = 0$ which implies $\mu_{t,x} = \delta_{u(t,x)}$. 
\end{itemize}
The convergence of $\{\veps\}$ follows from $\veps = \seps-\ueps$ and the strong convergence of $\{\seps\}$.

\begin{rem}
One may ask whether the strong convergence of $\{\ueps\}$ and $\{\veps\}$ can be extended to the whole domain $[0,T]\times\R$. The compensated compactness approach fails at points where $\partial_x\mathcal{V}^1[u,v] = \partial_x\mathcal{V}^2[u,v]$ (which implies $\mathcal{F}=0$ with $\mathcal{F}$ defined in \eqref{eq:definition_quantity_F_necessary_later}), since \eqref{eq:proof_identification_YM_step1_algebraic_identity_between_weak_limits_logarithmic_variables} reduces to the trivial identity $0=0$ once $\{\partial_x \seps^{\alpha}\}$ is known to converge strongly. Similarly, in the alternative argument of Section~\ref{subsect:alternative_proof_alpha_at_most_3}, \eqref{eq:2nd_application_comp_compactn_identity_step_1} degenerates to the same trivial identity. A natural approach would be to exploit the energy $(u,v)\mapsto \int_{\R} (u \log u + v \log v) \diff x$, in analogy with the use of the dissipation of $s \mapsto \int_{\R} s \log s\diff x$ to prove the strong convergence of $\{\partial_x \seps^{\alpha}\}$. The main difficulty is that, although all the terms in the PDE for the limit $u$ have been identified, it remains unclear how to justify rigorously testing this PDE by $\log u$ (and similarly testing the equation for $v$ by $\log v$), since $u$ and $v$ possess only $L^p((0,T)\times\R)$ regularity. This argument could also show strong convergence of $\{\partial_x \seps^{\frac{\alpha}{2}}\}$.
\end{rem}
\subsection{Proof of Proposition \ref{prop:the_case_of_support_on_the_line_identification_of_YM} for $\alpha\in(0,2]$.}\label{subsect:alternative_proof_alpha_at_most_3} Here, we provide an alternative proof of Proposition \ref{prop:the_case_of_support_on_the_line_identification_of_YM} (and so, Theorem \ref{thm:main} as well) which requires less algebraic manipulations since it uses the conservation law for $\frac{\ueps^2}{\seps}$ from Proposition \ref{prop:new_cons_law} instead of the one for $\ueps\log\ueps+\veps\log\veps$ from Proposition \ref{prop:new_cons_law_log_general_alpha} and the nonlinearities involved are easier to handle. However, this argument works only for $\alpha \leq 2$ as explained in Section \ref{subsect:cons_law_u2_over_s}. \\ 

\underline{Step 1: Identity between weak limits.} We will prove that
\begin{equation}\label{eq:2nd_application_comp_compactn_identity_step_1}
\begin{split}
 \frac{1}{3}\, \overline{\,\frac{u^4}{s^2}\,} \, \partial_x (\mathcal{V}^1[u,v] - &\mathcal{V}^2[u,v]) \\ &= \frac{1}{\alpha}\, \overline{\frac{u}{s} \partial_x s^{\alpha}} \, \overline{\, \frac{u^2}{s}\,}- \frac{1}{\alpha}\,u\, \overline{\frac{u^2}{s^2}\, \partial_x s^{\alpha}} + \frac{1}{3}\, u\, \overline{\,\frac{u^3}{s^2}\,} \partial_x (\mathcal{V}^1[u,v] - \mathcal{V}^2[u,v]),
\end{split}
\end{equation}
where $\overline{\,\frac{u^4}{s^2}\,}$, $\overline{\frac{u}{s} \partial_x s^{\alpha}}$, $\overline{\, \frac{u^2}{s}\,}$, $ \overline{\frac{u^2}{s^2}\, \partial_x s^{\alpha}}$, $\overline{\,\frac{u^3}{s^2}\,}$ are the weak limits defined in \ref{conv:fractions} and \ref{conv_product_u_derivative_s_alpha_divided_by_s}. As in Section \ref{subsect:the_case_that_YM_supp_on_a_nonzero_line}, we apply Theorem~\ref{thm:div_curl_lemma_version_CR} to the vector fields
\begin{align*}
{\bf a_\eps} &= \left(\ueps, -\frac{1}{\alpha}\, \frac{\ueps}{\seps} \partial_x \seps^{\alpha} -\ueps \, \partial_x \mathcal{V}^1[\ueps,\veps] \right),\\
{\bf b_\eps} &= \left(\frac{1}{\alpha}\,\frac{\ueps^2}{\seps^2}\, \partial_x \seps^{\alpha} + \frac{\ueps^2}{\seps}\,  \partial_x \mathcal{V}^1[\ueps, \veps] - \frac{\ueps^3}{3 \, \seps^2}\, \partial_x (\mathcal{V}^1[\ueps, \veps] - \mathcal{V}^2[\ueps, \veps]),\, \frac{\ueps^2}{\seps} \right). 
\end{align*}
Let us explain why the assumptions of Theorem \ref{thm:div_curl_lemma_version_CR} are satisfied. Clearly, both $\{\bf a_\eps\}$ and $\{\bf b_\eps\}$ converge weakly in $L^2_{\loc}((0,T)\times\R)$ to some ${\bf a}$ and ${\bf b}$ by
\ref{conv:sum_strong}, \ref{conv:u_v_weak}, \ref{conv:velocity_fields}, \ref{conv:fractions}, and \ref{conv_product_u_derivative_s_alpha_divided_by_s} in Proposition~\ref{prop:basic_convergences}. Moreover, $\DIV {\bf a_\eps} =\eps\, \partial^2_x \ueps$ converges strongly to 0 in $H^{-1}((0,T)\times\R)$ because, by \ref{est:energy_derivatives_u_v_eps_signular}, $\eps\,\partial_x\ueps, \eps\,\partial_x \veps\to 0$ strongly in $L^2((0,T)\times\R)$. Regarding $\CURL {\bf b_\eps}$, we have $\CURL {\bf b_\eps} = - g_{\eps} - h_{\eps}$, where $\{g_{\eps}\}$ and $\{h_{\eps}\}$ are defined in Proposition~\ref{prop:new_cons_law}. We know that $\{g_{\eps}\}$ is bounded in $L^1_{\loc}([0,T]\times\R)$, which compactly embeds into $W^{-1,p}_{\loc}([0,T]\times\R)$ for $p<2$ \cite[Theorem 6, p.~7]{MR1034481}, while $\{h_{\eps}\}$ is strongly compact in $H^{-1}((0,T)\times\R)$. Combining these two facts, $\{\CURL {\bf b_\eps}\}$ is strongly compact in $(W^{1,\infty}_0(\Omega))^*$ for each bounded $\Omega \subset [0,T]\times\R$. Finally, the product $\{{\bf a_\eps} \cdot {\bf b_\eps}\}$ is uniformly integrable in $L^1(\Omega)$ for $\Omega$ as above. Indeed, writing ${\bf a_\eps} = (a^1_{\eps}, a^2_{\eps})$ and ${\bf b_\eps} = (b^1_{\eps}, b^2_{\eps})$, $\{a^1_{\eps}\}$ and $\{b^2_{\eps}\}$ are bounded in $L^{\infty}((0,T)\times\R)$ by \ref{est:conservation}, while $\{a^2_{\eps}\}$ and $\{b^1_{\eps}\}$ are bounded at least in $L^{2}_{\loc}((0,T)\times\R)$ by \ref{est:crucial_estimate_gradient_power_alpha}, \ref{est:conservation}, and Assumption~\ref{ass:velocity_kernels_main}. It follows that $\{{\bf a_\eps} \cdot {\bf b_\eps}\}$ is bounded in $L^2(\Omega)$, which implies uniform integrability in $L^1(\Omega)$.\\

We compute
\begin{align*}
{\bf a_\eps}\cdot {\bf b_\eps} =&\, \ueps \left(\frac{1}{\alpha}\,\frac{\ueps^2}{\seps^2}\, \partial_x \seps^{\alpha} + \frac{\ueps^2}{\seps}\,  \partial_x \mathcal{V}^1[\ueps, \veps] - \frac{\ueps^3}{3 \, \seps^2}\, \partial_x (\mathcal{V}^1[\ueps, \veps] - \mathcal{V}^2[\ueps, \veps])\right)\\ 
&-\left(\frac{1}{\alpha}\, \frac{\ueps}{\seps} \partial_x \seps^{\alpha} +\ueps \, \partial_x \mathcal{V}^1[\ueps,\veps] \right) \frac{\ueps^2}{\seps}\\
=&\,-\frac{\ueps^4}{3 \, \seps^2}\, \partial_x (\mathcal{V}^1[\ueps, \veps] - \mathcal{V}^2[\ueps, \veps]).
\end{align*}
By \ref{conv:velocity_fields} and \ref{conv:fractions}
$$
{\bf a_\eps}\cdot {\bf b_\eps} \weak -\frac{1}{3}\, \overline{\,\frac{u^4}{s^2}\,} \, \partial_x (\mathcal{V}^1[u,v] - \mathcal{V}^2[u,v]).
$$
Moreover, by \ref{conv:u_v_weak}, \ref{conv:velocity_fields}, \ref{conv:fractions}, \ref{conv_product_u_derivative_s_alpha_divided_by_s}, we can identify also products of the weak limits ${\bf a} = (a^1, a^2)$ and ${\bf b} = (b^1, b^2)$ as
$$
a^1\, b^1 = u\, \left(\frac{1}{\alpha}\,\overline{\frac{u^2}{s^2}\, \partial_x s^{\alpha}} + \overline{\,\frac{u^2}{s}\,}  \partial_x \mathcal{V}^1[u,v] - \frac{1}{3} \overline{\,\frac{u^3}{s^2}\,} \partial_x (\mathcal{V}^1[u,v] - \mathcal{V}^2[u,v]) \right),
$$
$$
a^2 \, b^2 = - \left(\frac{1}{\alpha}\, \overline{\frac{u}{s} \partial_x s^{\alpha}} +u \, \partial_x \mathcal{V}^1[u,v]\right)\, \overline{\, \frac{u^2}{s}\,}, 
$$
so we obtain 
$$
{\bf a}\cdot {\bf b} = 
-\frac{1}{\alpha}\, \overline{\frac{u}{s} \partial_x s^{\alpha}} \, \overline{\, \frac{u^2}{s}\,}+ \frac{1}{\alpha}\,u\, \overline{\frac{u^2}{s^2}\, \partial_x s^{\alpha}} - \frac{1}{3}\, u\, \overline{\,\frac{u^3}{s^2}\,} \partial_x (\mathcal{V}^1[u,v] - \mathcal{V}^2[u,v]).
$$
Therefore, \eqref{eq:2nd_application_comp_compactn_identity_step_1} follows directly from Theorem \ref{thm:div_curl_lemma_version_CR}.\\

\underline{Step 2. The identity away from vacuum.} We prove that for $(t,x)$ such that $s(t,x)>0$ we have
\begin{equation}\label{eq:2nd_application_comp_compactn_identity_step_2}
\frac{u}{s}\, \overline{\frac{u^2}{s^2}\, \partial_x s^{\alpha}} - \overline{\frac{u}{s} \partial_x s^{\alpha}} \, \overline{\, \frac{u^2}{s^2}\,}=  \frac{\mathcal{F}}{3}\, \left( \overline{\,\frac{u^4}{s^4}\,} -  \frac{u}{s}\, \overline{\,\frac{u^3}{s^3}\,} \right).
\end{equation}
Indeed, by \ref{conv:fractions} we have $s^2 \overline{\,\frac{u^4}{s^2}\,} =  \overline{\,\frac{u^4}{s^4}\,}$, $\overline{\, \frac{u^2}{s}\,} = s\,\overline{\, \frac{u^2}{s^2}\,}$, $ \overline{\,\frac{u^3}{s^2}\,} = s\, \overline{\,\frac{u^3}{s^3}\,}$ so that we transform \eqref{eq:2nd_application_comp_compactn_identity_step_1} into
\begin{multline*}
 \frac{s^2}{3}\, \overline{\,\frac{u^4}{s^4}\,} \, \partial_x (\mathcal{V}^1[u,v] - \mathcal{V}^2[u,v]) \\ = \frac{s}{\alpha}\, \overline{\frac{u}{s} \partial_x s^{\alpha}} \, \overline{\, \frac{u^2}{s^2}\,}- \frac{s}{\alpha}\,\frac{u}{s}\, \overline{\frac{u^2}{s^2}\, \partial_x s^{\alpha}} + \frac{s^2}{3}\, \frac{u}{s}\, \overline{\,\frac{u^3}{s^3}\,} \partial_x (\mathcal{V}^1[u,v] - \mathcal{V}^2[u,v]).
\end{multline*}
By using formula for $\mathcal{F}$ in \eqref{eq:definition_quantity_F_necessary_later}
$$
-\frac{s\,\mathcal{F}}{3\,\alpha}\, \overline{\,\frac{u^4}{s^4}\,} = \frac{s}{\alpha}\, \overline{\frac{u}{s} \partial_x s^{\alpha}} \, \overline{\, \frac{u^2}{s^2}\,}- \frac{s}{\alpha}\,\frac{u}{s}\, \overline{\frac{u^2}{s^2}\, \partial_x s^{\alpha}} - \frac{s\,\mathcal{F}}{3\,\alpha}\, \frac{u}{s}\, \overline{\,\frac{u^3}{s^3}\,}.
$$
Dividing by $\frac{s}{\alpha}$ and rearranging we obtain \eqref{eq:2nd_application_comp_compactn_identity_step_2}.\\

\underline{Step 3. Representation for the weak limits in terms of $\pi_{t,x}$.} We claim
\begin{equation}\label{eq:representation_weak_limit_u4_s4_step3}
 \overline{\,\frac{u^4}{s^4}\,} -  \frac{u}{s}\, \overline{\,\frac{u^3}{s^3}\,}  =
 \int_{\R^2} \left(\lambda_1^4 + 3\, \frac{u}{s}\, \lambda_1^3 + 3\, \frac{u^2}{s^2} \, \lambda_1^2\right) \diff \pi_{t,x}(\lambda_1,\lambda_2),
\end{equation}
\begin{equation}\label{eq:2nd_application_comp_compactness_limit_identification_step_3_mixed_fractions_gradients}
\begin{split}
\frac{u}{s}\, \overline{\frac{u^2}{s^2}\, \partial_x s^{\alpha}} - \overline{\frac{u}{s} \, \partial_x s^{\alpha}} \, \overline{\, \frac{u^2}{s^2}\,}
=& \int_{\R^2}\left( \frac{\mathcal{F}\,u}{s}\, \lambda_1^3   + \frac{\mathcal{F}u^2}{s^2}\, \lambda_1^2 \right) \diff \pi_{t,x}(\lambda_1,\lambda_2)\\
&- \mathcal{F}\, \left(\int_{\R^2} \lambda_1^2 \diff \pi_{t,x}(\lambda_1, \lambda_2)\right)^2.
\end{split}
\end{equation}
Let $q_{\eps} = \frac{\ueps}{\seps}$, $q=\frac{u}{s}$, $r_{\eps} = \partial_x \seps^{\alpha}$, $r=\partial_x s^{\alpha}$. We have
$$
q_{\eps}^3 = ((q_{\eps}-q) + q)^3 = (q_{\eps}-q)^3 + 3\,(q_{\eps}-q)^2\, q + 3\,(q_{\eps}-q)\, q^2 + q^3,
$$
so we obtain
$$
q_{\eps}^4 - q\, q_{\eps}^3 = q_{\eps}^3\, (q_{\eps}-q) = (q_{\eps}-q)^4 + 3\,(q_{\eps}-q)^3\, q + 3\,(q_{\eps}-q)^2\, q^2 + q^3\,(q_{\eps}-q).
$$
All the terms above are converging weakly$^*$ by \ref{conv:fractions} so a direct application of the representation formula \eqref{eq:representation_weak_limits_general_sequence_caratheodory_integrand} yields \eqref{eq:representation_weak_limit_u4_s4_step3}. Similarly,
\begin{align*}
q\,q_{\eps}^2\,r_{\eps} =&\,q\, (q_{\eps}-q + q)^2\, r_{\eps} = q\, (q_{\eps}-q)^2 \, r_{\eps} + 2\, q^2\, (q_{\eps}-q)\, r_{\eps} + q^3\, r_{\eps}
\\=&\, q\, (q_{\eps}-q)^2 \, (r_{\eps}-r) + q\, (q_{\eps}-q)^2\,r\\
&+ 2\, q^2\, (q_{\eps}-q) \, (r_{\eps}-r) +  2\, q^2\, (q_{\eps}-q) \,r + q^3\, (r_{\eps}-r) + q^3\,r,
\end{align*}
where all the terms are converging weakly at least in $L^2(0,T; L^2_{\loc}(\R))$ by \ref{conv:gradient_sum}, \ref{conv:fractions} and \ref{conv_product_u_derivative_s_alpha_divided_by_s}. Hence, by the representation formula \eqref{eq:representation_weak_limits_general_sequence_caratheodory_integrand},
$$
\frac{u}{s}\, \overline{\frac{u^2}{s^2}\, \partial_x s^{\alpha}} = \int_{\R^2}\left( \frac{u}{s}\, \lambda_1^2 \, \lambda_2 + \frac{u}{s}\,\partial_x s^{\alpha} \,  \lambda_1^2 +2 \, \frac{u^2}{s^2}\, \lambda_1\, \lambda_2 \right) \diff \pi_{t,x}(\lambda_1,\lambda_2) + \frac{u^3}{s^3}\, \partial_x s^{\alpha}.
$$
By using the assumption that $\lambda_2 = \mathcal{F}\,\lambda_1$ we simplify
\begin{equation}\label{eq:identification_of_the_limit_step_3_A_fraction_squared_times_gradient}
\frac{u}{s}\, \overline{\frac{u^2}{s^2}\, \partial_x s^{\alpha}} = \int_{\R^2}\left( \frac{\mathcal{F}\,u}{s}\, \lambda_1^3  + \frac{u}{s}\,\partial_x s^{\alpha} \,  \lambda_1^2 +2 \, \frac{\mathcal{F}u^2}{s^2}\, \lambda_1^2 \right) \diff \pi_{t,x}(\lambda_1,\lambda_2) + \frac{u^3}{s^3}\, \partial_x s^{\alpha}.
\end{equation}
Next, we consider the second term on the (LHS) in \eqref{eq:2nd_application_comp_compactness_limit_identification_step_3_mixed_fractions_gradients}:
\begin{align*}
\overline{\frac{u}{s} \, \partial_x s^{\alpha}} \, \overline{\, \frac{u^2}{s^2}\,} =& \left(\overline{\frac{u}{s}\, \partial_x s^{\alpha}} -\frac{u}{s}\, \partial_x s^{\alpha}\right) \, \overline{\, \frac{u^2}{s^2}\,} + \frac{u}{s}\, \partial_x s^{\alpha} \, \overline{\, \frac{u^2}{s^2}\,}\\
=& \left(\overline{\frac{u}{s}\, \partial_x s^{\alpha}} -\frac{u}{s}\, \partial_x s^{\alpha}\right) \,\left( \overline{\, \frac{u^2}{s^2}\,} - \frac{u^2}{s^2} \right)+  \left(\overline{\frac{u}{s}\, \partial_x s^{\alpha}} -\frac{u}{s}\, \partial_x s^{\alpha}\right)\, \frac{u^2}{s^2} \\
&+ \frac{u}{s}\, \partial_x s^{\alpha} \,\left( \overline{\, \frac{u^2}{s^2}\,} - \frac{u^2}{s^2}\right) + \frac{u^3}{s^3}\, \partial_x s^{\alpha}.
\end{align*}
Using explicit formulas \eqref{eq:equivalence_convergence_in_the_weak_formulation_double_integral_wrt_pi_is_zero}, \eqref{eq:difference_weak_limit_fraction_squared_in_terms_of_YM} and the assumption that $\lambda_2 = \mathcal{F}\,\lambda_1$ we obtain
\begin{equation}\label{eq:identification_of_the_limit_product_step_3_B}
\begin{split}
\overline{\frac{u}{s} \, \partial_x s^{\alpha}} \, \overline{\, \frac{u^2}{s^2}\,} = &\, \mathcal{F}\, \left(\int_{\R^2} \lambda_1^2 \diff \pi_{t,x}(\lambda_1, \lambda_2)\right)^2 \\&+ \left(\mathcal{F}\, \frac{u^2}{s^2} + \frac{u}{s}\, \partial_x s^{\alpha} \right) \int_{\R^2} \lambda_1^2 \diff \pi_{t,x}(\lambda_1, \lambda_2) + \frac{u^3}{s^3}\, \partial_x s^{\alpha}.
\end{split}
\end{equation}
Combining \eqref{eq:identification_of_the_limit_step_3_A_fraction_squared_times_gradient} and \eqref{eq:identification_of_the_limit_product_step_3_B} we finally obtain \eqref{eq:2nd_application_comp_compactness_limit_identification_step_3_mixed_fractions_gradients}.\\

\underline{Step 4: Conclusion.} We plug \eqref{eq:representation_weak_limit_u4_s4_step3} and \eqref{eq:2nd_application_comp_compactness_limit_identification_step_3_mixed_fractions_gradients} into \eqref{eq:2nd_application_comp_compactn_identity_step_2} to obtain
$$
\frac{1}{3}\,  \int_{\R^2} \lambda_1^4  \diff \pi_{t,x}(\lambda_1, \lambda_2) + \left(\int_{\R^2} \lambda_1^2 \diff \pi_{t,x}(\lambda_1, \lambda_2)\right)^2 = 0
$$
which concludes the proof.

\subsection{Extension to the nonconservative case \eqref{eq:general_cross_diffusion_intro_short_velocity_noncons}}\label{sect:extension_nonconservative}
We briefly outline the necessary modifications in order to treat the nonconservative system \eqref{eq:general_cross_diffusion_intro_short_velocity_noncons}.\\

\underline{Sections \ref{subsect:aprioriestimates} and \ref{subsect:basic_compactness}: A priori estimates and compactness.} The proofs of \ref{est:moment}, \ref{est:gradient_from_entropy}, and \ref{est:entropy_derivatives_u_v_eps_signular} are essentially identical. First, the mass $M$ is no longer fixed, but it can be controlled by a simple Gr\"onwall argument. Therefore, in what follows, we still assume that $\int_{\R} \seps \diff x \leq M$ for all $t\in[0,T]$. The identity \eqref{eq:entropy_bound_alpha_between_0_and_13} has an additional term on the (RHS)
$$
(\|G^1\|_{L^{\infty}} + \|G^2\|_{L^{\infty}}) \int_{\R} (\ueps |\log \ueps| + \veps |\log \veps|) \diff x,
$$
which can be estimated in terms of the integrals of $\ueps \log \ueps$, $\ueps\, |x|$ and $e^{-|x|}$ by Lemma~\ref{lem:control_negative_parts_of_log}. Similarly, the identity \eqref{eq:final_identity_for_evol_1st_moment_before_splitting_2_cases} gains an additional term on the (RHS) of the form
$$
(\|G^1\|_{L^{\infty}} + \|G^2\|_{L^{\infty}}) \int_{\R} \seps \,|x| \diff x.
$$
This still permits closing the Gr\"onwall-type argument for $\int_{\R} (\ueps|\log \ueps| + \veps|\log \veps| + \seps|x|) \diff x$ when $\alpha \leq 1$. For $\alpha >1$, the identity \eqref{eq:identity_estimates_alpha_geq_1_energy} gets an additional term on the (RHS) of the form $
(\|G^1\|_{L^{\infty}} + \|G^2\|_{L^{\infty}}) \int_{\R} \seps^{\alpha} \diff x,
$ 
so Gr\"onwall's inequality can be applied once again.\\

The main modification concerns the $L^{\infty}$ bound on $\seps$ obtained via Alikakos' method. Indeed, the key identity for $\partial_t \int_{\R} \seps^{\gamma} \diff x$ used there, namely \eqref{eq:evolution_of_the_gamma_moment_before_doing_the_interpolation_without_the_rubbish}, has an additional term $C\,\gamma\int_{\R} \seps^{\gamma} \diff x$ and its dependence on $\gamma$ requires some care. Nevertheless, this term still allows one to conclude that $\{\seps\}$ is bounded in $L^{\infty}(0,T; L^p(\R))$ for all $1 \leq p<\infty$ by the argument following \eqref{eq:evolution_of_the_gamma_moment_before_doing_the_interpolation_without_the_rubbish}. In particular, \eqref{eq:claim_s_to_power_3_halves_is_in_l2} remains valid. Proceeding to the iteration scheme, it relies on estimating the (RHS) of \eqref{eq:evolution_of_the_gamma_moment_before_doing_the_interpolation_without_the_rubbish}. The key observation is that the new term $\gamma\int_{\R} \seps^{\gamma} \diff x$ can be handled in the same way as the term $\frac{\gamma\,(\gamma-1)}{2\,\delta_1} \int_{\R} \seps^{-\alpha+\gamma+1} \diff x$, which already appears there. Repeating the computation in \eqref{eq:estimate_Linf_bound_integral_-alph+gamma_first_splitting}, we write 
$$
\int_{\R} \seps^{\gamma} \diff x =  \int_{\R} \seps^{\frac{\gamma+\alpha}{2}+1}\, \seps^{\frac{\gamma-\alpha}{2}-1} \diff x \leq \left\| \seps^{\frac{\gamma+\alpha}{2}+1}  \right\|_{L^{\infty}(\R)} \int_{\R}\seps^{\frac{\gamma-\alpha}{2}-1} \diff x. 
$$
The first factor $\| \seps^{\frac{\gamma+\alpha}{2}+1} \|_{L^{\infty}(\R)}$ has already been estimated in \eqref{eq:estimate_L_inf_norm_term_half_of_gamma_via_dissipation}. For the second factor, since $\frac{\gamma-\alpha}{2}-1<\frac{\gamma}{2}$, we can estimate $\int_{\R}\seps^{\frac{\gamma-\alpha}{2}-1} \diff x \leq (\int_{\R} \seps^{\frac{\gamma}{2}} \diff x)^{\beta}\, M^{1-\beta}$ with $\beta = \frac{\frac{\gamma-\alpha}{2}-2}{\frac{\gamma}{2}-1}$, analogously to \eqref{eq:Holder_to_adjust_the_exponent_to_gamma_over_2_for_alikakos}. Note that $\beta \in [\frac{4}{5},1]$ for $\gamma$ sufficiently large. Therefore, we obtain an inequality analogous to \eqref{eq:final_bound_to_start_alikakos}, which allows us to apply the Alikakos' method. The remaining estimates, as well as the convergence results in Section~\ref{subsect:basic_compactness}, are unaffected by the additional term.\\

\underline{Sections \ref{subsect:cons_laws_all_alpha} and \ref{subsect:cons_law_u2_over_s}: Conservation laws.} Only minor modifications are required here. In Proposition~\ref{prop:new_cons_law_log_general_alpha}, the function $g_{\eps}$ contains an additional term that reads $\ueps\, (1+\log \ueps)\, G^1(\seps) + \veps\, (1+\log \veps)\, G^2(\seps)$, which belongs to the desired space by \ref{est:ulogu_vlogv}. Similarly, in Proposition~\ref{prop:new_cons_law}, $g_{\eps}$ contains an additional term $\frac{2\,\ueps^2}{\seps}\, G^1(\seps) - \frac{\ueps^3}{\seps^2}\,G^1(\seps) - \frac{\ueps^2\, \veps}{\seps^2}\,G^2(\seps)$,  which again belongs to the desired space by \ref{est:conservation}.\\

\underline{Section \ref{subsect:dissipation_measure_gradient_squared}: The limit of $|\partial_x \seps^{\alpha} - \partial_x s^{\alpha}|^2$.} The identification of the limit relies on the PDE for $\seps$, integrated in space and multiplied by $\partial_x \seps^{\alpha}$. The additional terms to be handled are $\left[\int_{-\infty}^x \ueps\, G^1(\seps) \diff y \right]\, \partial_x \seps^{\alpha}$, $\left[\int_{-\infty}^x \veps\, G^2(\seps) \diff y \right]\, \partial_x \seps^{\alpha}$. A standard argument, based on integration by parts and the strong convergence of $\{\seps\}$, shows that these terms converge to $\left[\int_{-\infty}^x u\, G^1(s) \diff y \right]\, \partial_x s^{\alpha}$, $\left[\int_{-\infty}^x v\, G^2(s) \diff y \right]\, \partial_x s^{\alpha}$. Consequently, they do not contribute to the limit of $|\partial_x \seps^{\alpha} - \partial_x s^{\alpha}|^2$.  \\

\underline{Sections \ref{sect:young_measures_compensated_compactness}, \ref{subsect:support_is_a_line}, \ref{subsect:the_case_that_YM_supp_on_a_nonzero_line}, and \ref{subsect:alternative_proof_alpha_at_most_3}: Young measures and compensated compactness.} There are no changes needed there. The inclusion of the new terms in the function $g_{\eps}$ (in Sections \ref{subsect:cons_laws_all_alpha} and \ref{subsect:cons_law_u2_over_s}) above makes the additional terms completely invisible in the compensated compactness argument.\\

\underline{Section \ref{sect:main_proof_after_two_steps}: Proof of Theorem \ref{thm:main}.} The only required modification arises in the proof of the strong convergence of $\{\partial_x \seps^{\alpha}\}$. In \eqref{eq:energy_identity_before_sending_eps_to_0}, an additional term $\ueps\, G^1(\seps)\, \seps^{\alpha} + \veps\, G^2(\seps)\, \seps^{\alpha}$ appears, whose weak limit is $u\, G^1(s)\, s^{\alpha} + v\, G^2(s)\, s^{\alpha}$. This is precisely the extra term that arises in \eqref{eq:energy_identity_after_sending_eps_to_0}. Consequently, these new contributions cancel within the energy argument of Section~\ref{sect:main_proof_after_two_steps}, and the same conclusion follows.

\section*{Acknowledgements}
I would like to thank José A. Carrillo, Charles Elbar, Ansgar Jüngel, Alpár R. Mészáros, Guy Parker, and Yao Yao for fruitful discussions that significantly improved my understanding of the problem. I am grateful to Yurij Salmaniw for pointing out that the result extends naturally to the case of weighted pressure (Remark~\ref{rem:Darcy_law_weights}), which is of interest in mathematical biology. I also thank Filippo Santambrogio for the observation that, in order to obtain tightness of the sequence $\{\seps\}$, it suffices to bound $\int_{\R} \seps \, |x| \diff x$ for all $\alpha \in (0,\infty)$. In a~previous version of the manuscript, we used $\int_{\R} \seps \, |x| \diff x$ for $\alpha \leq \frac{1}{3}$ and $\int_{\R} \seps \, |x|^2 \diff x$ for $\alpha > \frac{1}{3}$. I~further thank Simon Schulz for suggesting additional references on $2\times 2$ hyperbolic systems and for noting that the present work exploits only three entropy-entropy flux pairs. I am also grateful to Jakub Woźnicki for pointing out that, in the second application of compensated compactness, \cite{MR2769903} can be used, which greatly simplifies the argument. Finally, I thank Yurij Salmaniw and Jethro Warnett for numerous remarks and corrections on a~preliminary version of this work, which greatly improved its readability.
\appendix

\bibliographystyle{abbrv}
\bibliography{fastlimit}
\end{document}